\documentclass[a4paper, 12pt]{amsart}

\usepackage{amssymb}
\usepackage{amsthm}
\usepackage{amsmath}
\usepackage{amscd}

\usepackage{comment}

\usepackage[mathscr]{eucal}
\usepackage{enumitem}

\theoremstyle{plain}
\newtheorem{thm}{Theorem}[section]
\newtheorem{prop}[thm]{Proposition}
\newtheorem{cor}[thm]{Corollary}
\newtheorem{lem}[thm]{Lemma}

\theoremstyle{definition}
\newtheorem{df}{Definition}[section]

\theoremstyle{remark}

\newtheorem*{ac}{Acknowledgements}

\newcommand{\zz}{\mathbb{Z}}
\newcommand{\qq}{\mathbb{Q}}
\newcommand{\rr}{\mathbb{R}}
\newcommand{\cc}{\mathbb{C}}

\newcommand{\yoemph}[1]{\emph{#1}}

\newcommand{\yourysp}{\mathbb{U}}
\newcommand{\yourydis}{\rho}

\DeclareMathOperator{\card}{Card}

\newcommand{\supp}{\text{{\rm supp}}}

\newcommand{\yosub}{\subseteq}

\DeclareMathOperator{\yodiam}{diam}

\newcommand{\yocball}{\mathrm{B}}

\newcommand{\yochara}{\mathscr}

\newcommand{\youfin}{\yochara{N}}

\newcommand{\yofin}{\yochara{F}}

\newcommand{\yoofam}[1]{\mathbf{TEN}(#1)}

\newcommand{\yorsub}[1]{R_{#1}}

\newcommand{\yopetal}[2]{\Pi(#1, #2)}

\newcommand{\yomapsco}[2]{\mathrm{G}(#1, #2)}
\newcommand{\yomaindisco}{\mathord{\vartriangle}}

\newcommand{\yogf}[1]{\mathbb{F}_{#1}}

\newcommand{\yocoef}[2]{\boldsymbol{C}(#1, #2)}

\newcommand{\yorcf}[2]{\mathfrak{K}(#1, #2)}
\newcommand{\yorcff}{\zeta}

\newcommand{\yovring}[2]{\mathfrak{A}(#1, #2)}
\newcommand{\yovideal}[2]{\mathfrak{o}(#1, #2)}

\newcommand{\youhaq}[4]{\mathbb{A}_{#3, #4}(#1, #2)}
\newcommand{\youhavq}[4]{U_{#1, #2,  #3, #4}}

\newcommand{\youit}{\tau}

\newcommand{\yoha}[2]{\mathbb{H}(#1, #2)}
\newcommand{\yohav}[2]{v_{#1, #2}}

\newcommand{\yopideal}[2]{\mathbb{N}_{#1, #2}}
\newcommand{\yopf}[3]{\mathbb{P}_{#3}(#1, #2)}
\newcommand{\yopfv}[3]{V_{#1, #2, #3}}

\newcommand{\youulc}[4]{\mathbb{B}_{#3, #4}(#1, #2)}

\newcommand{\yolc}[2]{\mathbb{L}[#1, #2]}

\newcommand{\yopd}[2]{\mathbb{D}[#1, #2]}
\newcommand{\yoplc}[3]{\mathbb{M}_{#3}[#1, #2]}

\newcommand{\yostan}[3]{\mathrm{St}_{#1, #2, #3}}

\newcommand{\yoproj}{\mathrm{Pr}}

\newcommand{\yowitto}[1]{\mathbb{W}(#1)}
\newcommand{\yowitt}[1]{\mathrm{Fr}\mathbb{W}(#1)}
\newcommand{\yowiv}[1]{w_{#1}}

\newcommand{\yorep}{J}

\newcommand{\yonabs}[3]{\lVert #3\rVert_{#1, #2}}
\newcommand{\yonval}[2]{v_{#1, #2}}

\newcommand{\yoratio}{\eta}

\newcommand{\yosmf}[1]{\boldsymbol{#1}}

\newcommand{\yopnbf}[1]{\qq_{#1}}
\newcommand{\yopnbcf}[1]{\cc_{#1}}
\newcommand{\yopnbv}[1]{v_{#1}}
\newcommand{\yopnburf}[1]{\widehat{\yopnbf{#1}^{\mathrm{unr}}}}

\newcommand{\yolaurentf}[1]{#1((t))}
\newcommand{\yolaurentcf}[1]{\widehat{\overline{\yolaurentf{#1}}}}

\newcommand{\yogset}{\mathcal{G}}
\newcommand{\yoqpset}{\mathcal{CH}}
\newcommand{\yowusp}[1]{\mathbb{V}_{#1}}
\newcommand{\yowudis}[1]{\sigma_{#1}}

\makeatletter
\@addtoreset{equation}{section}

\makeatother
\begin{document}

%title
\title[Algebraic structures on Urysohn spaces]
{
Algebraic structures on 
non-Archimedean Urysohn universal 
metric spaces
}

%author
\author[Yoshito Ishiki]
{Yoshito Ishiki}

%adress
\address[Yoshito Ishiki]
{\endgraf
Department of Mathematical Sciences
\endgraf
Tokyo Metropolitan University
\endgraf
Minami-osawa Hachioji Tokyo 192-0397
\endgraf
Japan
}

%email
\email{yoshito.ishiki@riken.jp}

\date{\today}
\subjclass[2020]{Primary 54E35,
Secondary 12J25,
12J20}
\keywords{Urysohn universal ultrametric spaces,
non-Archimedean valued fields,
Hahn fields,
Levi--Civita fields,
$p$-adic
Levi--Civita fields,
injective ultrametric spaces}

%abstract
\begin{abstract}
We investigate valued-field structures on Urysohn universal ultrametric spaces. We introduce $p$-adic Levi--Civita fields as subfields of $p$-adic Hahn fields and treat them together with ordinary Levi--Civita fields. For a subgroup $G$ of $\mathbb{R}$ containing $\mathbb{Z}$ and a countably infinite perfect field $k$, the corresponding Levi--Civita valued field is isometric to the $R$-Urysohn universal ultrametric space, where $R=\{0\}\cup\{\eta^{-g}\mid g\in G\}$. Thus these spaces admit field structures extending prescribed prime valued fields, including $\mathbb{Q}$ with the trivial valuation and the $p$-adic fields $\mathbb{Q}_{p}$. We also prove that complete valued fields with infinite residue fields are haloed, and hence universal for separable ultrametric spaces with corresponding distance sets. In the separable case, such a valued field is itself isometric to the corresponding Urysohn space. Examples include $\mathbb{C}_{p}$, the completion of the maximal unramified extension of $\mathbb{Q}_{p}$, Laurent series fields, and completions of their algebraic closures. Finally, for a countably infinite perfect residue field, the corresponding full Hahn-type valued field is a Urysohn universal ultrametric space exactly when its value group is order-isomorphic to $\mathbb{Z}$. 
\end{abstract}

%%%%%%%%%%%%%%%%%%%%%%%%%%
%%%%%%%%%%%%%%%%%%%%%%%%%%
%%%%%%%%%%%%%%%%%%%%%%%%%%
%%%%%%%%%%%%%%%%%%%%%%%%%%
%%%%%%%%%%%%%%%%%%%%%%%%%%
%%%%%%%%%%%%%%%%%%%%%%%%%%
%%%%%%%%%%%%%%%%%%%%%%%%%%
%%%%%%%%%%%%%%%%%%%%%%%%%%
%%%%%%%%%%%%%%%%%%%%%%%%%%
%%%%%%%%%%%%%%%%%%%%%%%%%%

%%%%%%%%%%%%%%%%%%%%%%%%%%%%%%%%%%%%%%%%%%%
%Start!
\maketitle
%%%%%%%%%%%%%%%%%%%%%%%%%%%%%%%%%%%%%%%%%%

\section{Introduction}\label{sec:intro}
%\subsection{Backgrounds}

The Urysohn universal metric space, 
introduced by Urysohn 
\cite{Ury1927}, 
is characterized by 
universality and strong homogeneity. 
Its ultrametric counterpart is 
the Urysohn universal ultrametric space. 
Such spaces are closely related to 
homogeneous ultrametric spaces and to 
the general structure theory of ultrametric spaces; 
see 
\cite{delhomme2015homogeneous, 
MR2754373, 
MR2667917}. 
In the non-separable setting, 
petaloid ultrametric spaces play the role of 
Urysohn universal ultrametric spaces 
for arbitrary range sets 
\cite{Ishiki2023const, 
Ishiki2023Ury, 
Ishiki2023halo}. 

Another line of research asks 
whether universal metric spaces can carry 
algebraic operations compatible with 
their metrics. 
Cameron and Vershik investigated 
isometry groups and compatible group structures 
on the Urysohn space 
\cite{MR2258622}. 
Niemiec realized 
Urysohn universal spaces as 
metric groups of exponent
$2$
and 
developed a theory of 
universal valued Abelian groups 
\cite{MR2507687, 
MR3010064}. 
Doucha constructed 
non-abelian and abelian group structures 
on universal metric spaces 
\cite{MR3294611, 
MR3646786}. 
Motivated by these results, 
we investigate the analogous problem for 
non-Archimedean valued fields: 
we ask when Urysohn universal ultrametric spaces 
can themselves be realized as valued fields. 
This problem is closely related to 
the non-Archimedean Arens--Eells 
isometric embedding theorem for valued fields 
\cite{ishiki2026nonarchimedeanarenseellsisometricembedding}, 
which embeds arbitrary non-Archimedean metric spaces into 
spherically complete Hahn-type valued fields. 
The present paper instead seeks valued-field structures 
on the Urysohn universal ultrametric spaces themselves. 

Our main tools are 
Hahn fields, 
$p$-adic
Hahn fields, 
and their Levi--Civita subfields. 
Hahn-type fields are classical objects in 
valuation theory, 
and maximally complete fields of this kind 
go back to Kaplansky's work 
\cite{MR0006161}. 
For
$p$-adic
Hahn fields, 
we use Poonen's construction 
\cite{MR1225257}; 
for the algebraic structure of 
Hahn--Mal'cev--Neumann rings, 
see also 
\cite{Poonen2026units}. 
Lampert studied a related approach to algebraic 
$p$-adic
expansions using ordinal series 
\cite{Lampert1986algebraic}. 
Ordinary Levi--Civita fields are 
well-known non-Archimedean valued fields; 
see, for instance, 
\cite{MR3782290, 
MR4367483}. 
We introduce 
$p$-adic
Levi--Civita fields 
as suitable subfields of 
$p$-adic
Hahn fields. 
We then show that ordinary and 
$p$-adic
 Levi--Civita fields provide 
valued-field models of 
Urysohn universal ultrametric spaces, 
including their non-separable generalizations. 

As a consequence, 
we obtain field structures on 
Urysohn universal ultrametric spaces 
that extend prescribed prime valued fields. 
More precisely, 
if 
$p$
is either
 $0$
or a prime, 
then every Urysohn universal ultrametric space 
arising from our construction 
admits a field structure extending 
$\yopnbf{p}$,
where 
$\yopnbf{0}=\qq$
 is equipped with 
the trivial valuation 
(see Proposition
\ref{prop:primevaluedextension}
and Theorem 
\ref{thm:mainury}). 
We also establish a universality result for 
valued fields with sufficiently large residue fields 
(see Theorem 
\ref{thm:fin} and 
Corollary
 \ref{cor:padicc}).

This paper is organized as follows. 
In Section \ref{sec:pre}, 
we review the required preliminaries on 
metric and ultrametric spaces, 
valued rings, 
and valued fields. 
We also introduce 
Hahn fields, 
$p$-adic
 Hahn fields, 
and the corresponding 
Levi--Civita fields, 
and establish the algebraic facts used later. 
In Section 
\ref{sec:Urysohn}, 
we recall Urysohn universal ultrametric spaces 
and petaloid ultrametric spaces, 
realize them using ordinary and 
$p$-adic
Levi--Civita fields, 
and prove our field-structure theorem. 
In Section 
\ref{sec:univvalued}, 
we establish  results on the universality for complete valued fields 
and compare full Hahn fields with 
Urysohn universal ultrametric spaces.

\begin{ac}

The author would like to thank 
Tomoki Yuji for helpful 
advice on algebraic arguments.

The author wishes to express his 
deepest gratitude 
to 
all members of 
Photonics Control Technology Team (PCTT) in
 RIKEN, 
where
the majority of this paper was written, 
for their invaluable support. 
Special thanks are extended to 
the Principal Investigator of PCTT,
Satoshi Wada, for 
his encouragement and support, which 
transcended disciplinary boundaries.

This work was partially supported by JSPS 
KAKENHI Grant Number 
JP24KJ0182. 
\end{ac}

\section{Preliminaries}\label{sec:pre}

Some parts of this section overlap with the preliminary material in 
\cite{ishiki2026nonarchimedeanarenseellsisometricembedding}. 
We recall them here in order to make the present paper self-contained 
and to fix the notation used below. 

\subsection{Generalities}\label{subsec:gen}
\subsubsection{Metric spaces}
For a metric space 
$(X, d)$,
for a subset 
$A$
of 
$X$,
and 
for
$x\in X$,
we define 
$d(x, A)=d(A, x)=\inf\{\, d(a, x)\mid a\in A\, \}$.
For  
$a\in X$
and 
$r\in (0, \infty)$,
we denote by 
$B(a, r; d)$
the closed ball 
centered at
 $a$
 with radius 
$r$.
We often simply represent it 
as
 $B(a, r)$
when 
no confusion can arise. 
Similarly, we define the 
open ball 
$U(a, r)$.

\subsubsection{Valued rings}

Let 
$A$
be a commutative ring. 
We say that a function 
$v\colon A\to \rr\sqcup \{\infty\}$
is 
a
\emph{(additive) valuation}
if the following conditions are satisfied:
%enubegin
\begin{enumerate}

\item 
for every 
$x\in A$,
we have 
$v(x)=\infty$
if and only if 
$x=0$;

\item 
for every pair 
$x, y\in A$,
we have 
$v(xy)=v(x)+v(y)$;

\item 
for every pair 
$x, y\in A$,
we have 
$v(x+y)\ge v(x)\land v(y)$,
where 
$\land$
stands for the 
minimum operator on 
$\rr$.
\end{enumerate}
%enuend
If 
$A$
is a field, 
then
 $(A, v)$
  is called a 
\emph{valued field}. 
Note that for every valued ring 
$(A, v)$,
we can extend the valuation 
$v$
to the field of fractions 
$K$
of 
$A$.
Namely, for 
$x=b/a\in K$,
where 
$a, b\in A$
and 
$a\neq 0$,
we define 
$v(x)=v(b)-v(a)$.
In this case, 
$v$
is well-defined and 
the pair 
$(K, v)$
naturally becomes 
a valued field. 
For example, 
for a prime 
$p$,
the 
$p$-adic
valuation 
$\yopnbv{p}$
 on
  $\zz$
  is defined by 
$\yopnbv{p}(0)=\infty$
and 
$\yopnbv{p}(x)=n$
 when 
$x\neq 0$
and 
$p^{n}$
 is the highest power of 
 $p$
  dividing
   $x$.
The completion 
$\zz_{p}$
of
 $\zz$
with respect 
to 
$\yopnbv{p}$
is called 
the 
\emph{ring of
$p$-adic
integers}. 
The field of fractions
$\yopnbf{p}$
 of
  $\zz_{p}$
is called 
the 
\emph{field of 
$p$-adic
numbers}. 
For more information on 
$p$-adic
 numbers, 
we refer the reader to 
\cite{MR0554237} 
and 
\cite{MR2444734}.

We say that a function 
$\lVert *\rVert \colon A\to [0, \infty)$
is 
a
\emph{(non-Archimedean) absolute value}
or 
\emph{multiplicative valuation}
if the following conditions are satisfied:
%enustart
\begin{enumerate}

\item 
for every 
$x\in A$,
we have 
$\lVert x \rVert=0$
if and only if 
$x=0$;

\item 
for every pair 
$x, y\in A$,
we have 
$\lVert xy\rVert =\lVert x\rVert \cdot \lVert y\rVert$;

\item 
for every pair 
$x, y\in A$,
we have 
$\lVert x+y\rVert \le  \lVert x\rVert \lor  \lVert y\rVert$,
where 
$\lor$
stands for the 
maximum operator on 
$\rr$.
\end{enumerate}
%endenu

In what follows, for every 
$\yoratio\in (1, \infty)$,
we adopt the conventions 
$\yoratio^{-\infty}=0$
and 
$-\log_{\yoratio}(0)=\infty$.
For a valuation 
$v$
on a ring 
$A$,
and for a real  number 
$\yoratio\in (1, \infty)$,
we define 
$\yonabs{v}{\yoratio}{x}=\yoratio^{-v(x)}$
and 
$\yonval{\lVert *\rVert}{\yoratio}(x)
=-\log_{\yoratio}(\lVert x\rVert)$.

For a fixed 
$\yoratio\in (1, \infty)$,
valuations and non-Archimedean absolute values on a ring 
$A$
correspond as follows.
%propstart
\begin{prop}\label{prop:rho}
Let 
$A$
be a commutative ring. 
Then the following statements are true: 
%enustart
\begin{enumerate}[label=\textup{(\arabic*)}]

\item 
For every 
$\yoratio\in (1, \infty)$,
and 
for every valuation 
$v$
on 
$A$,
the function 
$\yonabs{v}{\yoratio}{*}\colon A\to [0, \infty)$
is 
an absolute value on 
$A$;

\item 
For 
$\yoratio\in (1, \infty)$,
and for every absolute value 
$\lVert *\rVert$
on 
$A$,
the function 
$\yonval{\lVert *\rVert}{\yoratio}\colon A\to \rr\sqcup\{\infty\}$
is a valuation on 
$A$.
\end{enumerate}
%enuend
\end{prop}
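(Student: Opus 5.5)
The plan is to verify the three axioms directly in each direction, using only that the map $t\mapsto \yoratio^{-t}$ is a strictly decreasing bijection from $\rr\sqcup\{\infty\}$ onto $[0,\infty)$, where we use the convention $\yoratio^{-\infty}=0$. Its inverse is $s\mapsto -\log_{\yoratio}(s)$, with the convention $-\log_{\yoratio}(0)=\infty$. It is also a homomorphism from $(\rr,+)$ onto $((0,\infty),\cdot)$. The only slightly delicate point is handling the value $\infty$ (resp.\ $0$) consistently with these conventions; I expect no real obstacle.

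For (1), fix a valuation $v$. Nondegeneracy: $\yonabs{v}{\yoratio}{x}=0$ iff $\yoratio^{-v(x)}=0$ iff $v(x)=\infty$ iff $x=0$. Multiplicativity: if $x,y\neq 0$, then $\yoratio^{-v(xy)}=\yoratio^{-v(x)-v(y)}=\yoratio^{-v(x)}\yoratio^{-v(y)}$. If one of them is $0$, then both sides vanish, since $xy=0$. Ultrametric inequality: from $v(x+y)\ge v(x)\land v(y)$ and the fact that $t\mapsto\yoratio^{-t}$ is decreasing (including at $\infty$), we get $\yoratio^{-v(x+y)}\le \yoratio^{-(v(x)\land v(y))}=\yoratio^{-v(x)}\lor\yoratio^{-v(y)}$.

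For (2), fix an absolute value $\lVert *\rVert$ and argue symmetrically. Nondegeneracy follows since $-\log_{\yoratio}(s)=\infty$ iff $s=0$. Additivity holds on nonzero elements because $\log$ converts products into sums, and the case with a zero factor is trivial since $\infty$ plus anything is $\infty$. The inequality $\lVert x+y\rVert\le\lVert x\rVert\lor\lVert y\rVert$ turns into $-\log_{\yoratio}\lVert x+y\rVert\ge(-\log_{\yoratio}\lVert x\rVert)\land(-\log_{\yoratio}\lVert y\rVert)$, because $-\log_{\yoratio}$ is decreasing on $[0,\infty)$ with value $\infty$ at $0$. This completes both parts.
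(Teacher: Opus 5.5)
Your proposal is correct and follows essentially the paper's own argument: the paper likewise verifies the axioms directly from the identities $\yoratio^{-(a+b)}=\yoratio^{-a}\yoratio^{-b}$ and $\yoratio^{-\min\{a,b\}}=\max\{\yoratio^{-a},\yoratio^{-b}\}$ (with the stated conventions at $\infty$ and $0$). Your case analysis for zero factors just makes explicit what the paper leaves implicit.
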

%propend
\begin{proof}
Both assertions follow directly from the definitions and the identities 
$\yoratio^{-(a+b)}=\yoratio^{-a}\yoratio^{-b}$
and 
$\yoratio^{-\min\{a,b\}}=\max\{\yoratio^{-a},\yoratio^{-b}\}$.
\end{proof}

We use both additive valuations and absolute values below. 
Additive valuations are denoted by symbols such as
$v$
and
$w$,
whereas absolute values are denoted by symbols such as 
$\lvert *\rvert$
and
$\lVert *\rVert$.

Let
$(A,v)$
be a valued domain and let
$K$
be its field of fractions, 
equipped with the extension of
$v$.
The set 
$\yovring{A}{v}=\{\, x\in K\mid 0\le v(x)\, \}$
is a valuation ring, and 
$\yovideal{A}{v}=\{\, x\in K\mid 0< v(x)\, \}$
is its maximal ideal. 
We denote by 
$\yorcf{A}{v}$
the residue field 
$\yovring{A}{v}/\yovideal{A}{v}$,
also called the 
\emph{residue class field of
$(A,v)$}.
We also denote by  
\[
\yorcff_{A}\colon \yovring{A}{v}\to \yorcf{A}{v}
\]
the canonical projection. 
We simply represent it as 
$\yorcff$
when no confusion
can arise. 
We say that a subset 
$\yorep$
of 
$K$
is 
a 
\emph{complete system of representatives of
$\yorcf{A}{v}$}
if 
$\yorep\yosub \yovring{A}{v}$,
$0\in \yorep$,
and 
$\yorcff|_{\yorep}\colon \yorep\to  \yorcf{A}{v}$
is 
bijective. 
Note that our definition requires 
$0\in \yorep$.

\subsection{Constructions of valued fields}\label{subsec:const}

We recall Hahn fields and their
$p$-adic
analogues. 
For background, see \cite{MR3782290,MR4367483}. 
Most of the results recalled below are taken from 
\cite{MR1225257,MR0554237}.

\subsubsection{Hahn rings and fields}

A non-empty subset 
$S$
of 
$\rr$
is 
said to be 
\emph{well-ordered} 
if 
every non-empty subset of 
$S$
has a minimum.

We denote by 
$\yogset$
the 
set of all subgroups
$G$
 of 
$\rr$
with 
$1\in G$
(equivalently, 
$\zz\yosub G\yosub \rr$).
For the sake of convenience, 
we only consider the setting
where
 $G\in \yogset$
in this paper.

Now we review 
the construction 
of the Hahn fields  in 
\cite{MR1225257}. 
Let 
$G\in \yogset$
and 
$A$
be a commutative ring. 
For a map 
$a\colon G\to A$,
we define the support  
$\supp(a)$
of 
$a$
by 
the set 
$\{\, x\in G\mid a(x)\neq 0\, \}$.
We denote by 
$\yoha{G}{A}$
the set of all 
$a\colon G\to A$
such that 
$\supp(a)$
is well-ordered. 
We often symbolically  represent 
$a\in \yoha{G}{A}$
as 
\[
a=\sum_{g\in G}a(g)t^{g}, 
\]
where
$t$
is an indeterminate. 
For every pair 
$a, b \in \yoha{G}{A}$,
we define 
$a+b$
by 
\[
(a+b)(x)=a(x)+b(x). 
\]
We also define 
the multiplication 
$ab\colon G\to A$
by 
\[
ab=\sum_{g\in G}\left(\sum_{i, j\in G, i+j=g}a_{i}b_{j}\right)t^{g}. 
\]
For the zero map, 
we put 
\[
\min\supp(0)=\infty.
\]
Define a map 
$\yohav{G}{A}$
on 
$\yoha{G}{A}$
by 
$\yohav{G}{A}(a)=\min \supp(a)$.
Since 
$\supp(a)$
is well-ordered for every non-zero 
$a$,
the minimum 
$\min\supp(a)$
actually exists. 
Note that 
$A$
becomes 
 a subring 
of 
$\yoha{G}{A}$.

%statebody
%propbegin
\begin{prop}\label{prop:prophahn}
Let 
$G\in \yogset$,
and 
$\yosmf{k}$
be a field. 
Then 
$(\yoha{G}{\yosmf{k}}, \yohav{G}{\yosmf{k}})$
becomes 
a valued field and  it 
satisfies 
$\yorcf{\yoha{G}{\yosmf{k}}}{\yohav{G}{\yosmf{k}}}=\yosmf{k}$.
\end{prop}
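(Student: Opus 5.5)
The plan is to verify, in order, that $\yoha{G}{\yosmf{k}}$ is a commutative ring, that $\yohav{G}{\yosmf{k}}$ is a valuation on it, that every nonzero element is invertible, and finally to identify the residue field.

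\textbf{Ring structure.} First I check that the operations are well defined. For $a,b\in\yoha{G}{\yosmf{k}}$, $\supp(a+b)\yosub\supp(a)\cup\supp(b)$, and a union of two well-ordered sets is well-ordered. For the product I use the standard lemma: if $S,T\yosub\rr$ are well-ordered, then
\begin{enumerate}
\item for each $g$ there are only finitely many pairs $(i,j)\in S\times T$ with $i+j=g$, and
\item $S+T$ is well-ordered.
\end{enumerate}
Both follow from the fact that every sequence in a well-ordered set has a non-decreasing subsequence. Applying this twice, an infinite family of such pairs (or an infinite strictly decreasing sequence in $S+T$) would yield pairs $(i_n,j_n)$ with both coordinates non-decreasing, contradicting $i_n+j_n$ being constant or strictly decreasing. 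Hence each coefficient of $ab$ is a finite sum and $\supp(ab)\yosub\supp(a)+\supp(b)$ is well-ordered. Associativity, distributivity and commutativity then follow by reindexing finite sums, as for polynomials.

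\textbf{Valuation axioms.} Since $\supp(a+b)\yosub\supp(a)\cup\supp(b)$, we get $v(a+b)\ge v(a)\land v(b)$. For multiplicativity, write $\alpha=\min\supp(a)$ and $\beta=\min\supp(b)$. Then $\alpha+\beta$ is the least element of $\supp(a)+\supp(b)$, and the only pair summing to it is $(\alpha,\beta)$. So the coefficient of $t^{\alpha+\beta}$ in $ab$ is $a(\alpha)b(\beta)\neq0$, because $\yosmf{k}$ is a field. This gives $v(ab)=v(a)+v(b)$ and shows the ring is a domain.

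\textbf{Field property (main obstacle).} Given $a\neq0$ with $v(a)=\alpha$, write $a=a(\alpha)t^{\alpha}(1-\varepsilon)$, where $v(\varepsilon)>0$. Since $t^{\alpha}$ and nonzero constants are units, it suffices to invert $1-\varepsilon$ by the series $\sum_{n\ge0}\varepsilon^{n}$. Here I would invoke Neumann's lemma: if $S\yosub(0,\infty)$ is well-ordered, then the additive monoid $\langle S\rangle$ it generates is well-ordered, and each element of $\langle S\rangle$ is a sum of elements of $S$ in only finitely many ways. Its proof is the delicate combinatorial step; I would argue by contradiction from a minimal bad sequence, or cite Poonen \cite{MR1225257}, from which the paper says this material is taken. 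Granting it, the sum $\sum\varepsilon^n$ is coefficientwise finite, its support lies in $\langle\supp\varepsilon\rangle$ and so is well-ordered, and the telescoping identity $(1-\varepsilon)\sum_{n\ge0}\varepsilon^n=1$ holds coefficientwise.

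\textbf{Residue field.} The valuation ring consists of the elements with $\supp\yosub[0,\infty)$, and the maximal ideal of those with $\supp\yosub(0,\infty)$. The map $a\mapsto a(0)$ is a surjective ring homomorphism onto $\yosmf{k}$: it is additive, and on the valuation ring the coefficient of $t^{0}$ in $ab$ comes only from the pair $(0,0)$, so it is multiplicative. Its kernel is exactly the maximal ideal. Therefore $\yorcf{\yoha{G}{\yosmf{k}}}{\yohav{G}{\yosmf{k}}}\cong\yosmf{k}$, and this identification is compatible with viewing $\yosmf{k}$ as the subring of constants.
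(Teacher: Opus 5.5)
Your proof is correct. The paper gives no argument of its own here: it simply cites Poonen \cite[Corollary 1]{MR1225257}, where the result is proved for an arbitrary ordered abelian group via Neumann's lemma. Your proposal is essentially a self-contained reconstruction of that standard Hahn--Neumann argument: the sumset lemma for the ring structure, the leading-term computation for multiplicativity, the geometric series for inverses, and the constant-term map for the residue field.

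One point is worth adding. The step you flag as the main obstacle is not delicate in this setting. Since $G\subseteq\mathbb{R}$, the element $\varepsilon$ has $\delta=\min\supp(\varepsilon)>0$, so $\supp(\varepsilon^{n})\subseteq[n\delta,\infty)$. For a fixed $g$, only the finitely many $n\le g/\delta$ contribute to the coefficient at $g$. Also, $\bigcup_{n}\supp(\varepsilon^{n})\cap(-\infty,g]$ is a finite union of well-ordered sets, hence well-ordered. This is exactly the Archimedean argument the paper itself uses in Lemma \ref{lem:dfield}\ref{item:ddd3} for the Levi--Civita subring. So for real exponents you need neither a minimal-bad-sequence argument nor the full Neumann lemma.

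The coefficientwise telescoping identity $(1-\varepsilon)\sum_{n}\varepsilon^{n}=1$ is then justified by your two-set sumset lemma, applied to $\supp(1-\varepsilon)$ and $\bigcup_{n}\supp(\varepsilon^{n})$. It guarantees that only finitely many pairs contribute at each $g$, so the sums may be interchanged.

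The comparison is therefore this. The citation buys brevity and generality to arbitrary ordered groups, which the paper alludes to right after the proposition. Your route buys a self-contained proof whose only combinatorial input is the elementary sumset lemma.
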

%propend
%proof
%proofbegin
\begin{proof}
See 
\cite[Corollary 1]{MR1225257}. 
\end{proof}
%proofend

We call 
$(\yoha{G}{A}, \yohav{G}{A})$
the 
\emph{Hahn ring  associated with 
$G$
and
$A$}
and call
it 
 the 
 \emph{Hahn field} 
 if 
$A$
is a field. 
Note that 
in general, 
we can define the Hahn fields even if 
$G$
is a linearly ordered Abelian group
(see \cite{MR1225257}).

\subsubsection{The
$p$-adic
Hahn fields}
A 
$p$-adic
analogue of the Hahn fields was
 first 
introduced in
 \cite{MR1225257}. 
Let us review 
a
construction. 
A field 
$\yosmf{k}$
of 
 characteristic 
$p$
is said to be 
\emph{perfect} 
if 
$p=0$,
or 
$p>0$
and 
every element
of
$\yosmf{k}$
has a 
$p$-th
root
in 
$\yosmf{k}$.
The following proposition states the 
existence of rings of Witt vectors.
The proof is presented in 
\cite{MR0554237}.

%state body 
%propbegin
\begin{prop}\label{prop:witt}
Let 
$\yosmf{k}$
be a perfect field of characteristic 
$p>0$.
Then, up to a unique valuation-preserving isomorphism inducing the identity on 
$\yosmf{k}$,
there exists a unique valued ring 
$(A, v)$
of characteristic 
$0$
such that
$A$
is complete, 
$v(A\setminus\{0\})=\zz_{\ge 0}$,
$v(p)=1$,
and 
$\yorcf{A}{v}=\yosmf{k}$.
\end{prop}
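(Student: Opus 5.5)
The plan is to prove existence via the Witt vector construction and uniqueness via the universal property of strict $p$-rings, following Serre's treatment in \cite{MR0554237}.

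\emph{Existence.} Let $W(\yosmf{k})$ be the ring of Witt vectors $(x_0,x_1,\dots)$, $x_i\in\yosmf{k}$, with addition and multiplication given by the universal Witt polynomials. These polynomials have integer coefficients, and the ring axioms are verified by checking the corresponding identities over $\zz[X_i,Y_i]$, where the ghost map is injective. In $W(\yosmf{k})$ we have:
\begin{itemize}
\item Verschiebung $V$ and Frobenius $F$ satisfy $p=VF$;
\item since $\yosmf{k}$ is perfect, $F$ is bijective, so $p^nW(\yosmf{k})=V^nW(\yosmf{k})$ consists of the vectors whose first $n$ coordinates vanish.
\end{itemize}
Define $v(x)$ as the index of the first nonzero coordinate of $x$. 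Then:
\begin{itemize}
\item $v(p)=1$, and $W(\yosmf{k})$ is complete and separated for the $p$-adic topology, being an inverse limit of the $W_n(\yosmf{k})$.
\item Every $x$ with $v(x)=n$ can be written $x=p^nu$ with $u_0\neq 0$. Such a $u$ is a unit: its image in $W/pW\cong\yosmf{k}$ is nonzero, and we invert by completeness.
\item Hence $W(\yosmf{k})$ is a domain, $v$ is multiplicative, and $v$ satisfies the ultrametric inequality because the $p^nW$ are ideals.
\item Characteristic $0$ follows since $v(p^n)=n<\infty$.
\item The residue field is $W/pW=\yosmf{k}$ via $x\mapsto x_0$, and $v(W\setminus\{0\})=\zz_{\ge 0}$.
\end{itemize}

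\emph{Uniqueness.} Let $(A,v)$ be any ring with the stated properties. Its residue field is perfect, so there is a unique multiplicative Teichmüller section $\tau\colon\yosmf{k}\to A$. Define $\tau(\lambda)=\lim_n \widetilde{\lambda^{p^{-n}}}^{\,p^n}$, where $\widetilde{\mu}$ denotes any lift of $\mu$. The limit exists and is independent of the chosen lifts because $a\equiv b \pmod{p^m}$ implies $a^p\equiv b^p \pmod{p^{m+1}}$.

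Since $v$ is discrete with $v(p)=1$ and $A$ is complete, every element of $A$ is uniquely a convergent series $\sum_n\tau(\lambda_n)p^n$. Indeed, $p$ is a uniformizer: any $x$ with $v(x)\ge1$ is $p$ times an element of valuation $v(x)-1$, because $v(x/p)\ge0$ and $A$ is the valuation ring on its fraction field. So $A$ is a strict $p$-ring. Comparing with $W(\yosmf{k})$, the map
\[
\sum_n\tau(\lambda_n)p^n\longmapsto\sum_n\tau_W(\lambda_n)p^n
\]
is a bijection preserving $v$. The main obstacle is showing that this map is a ring homomorphism.

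To handle it, I would show that addition and multiplication of Teichmüller expansions are governed by universal formulas. The $n$-th digit of a sum or product is given by polynomials over $\mathbb{F}_p$ in the $p^{-m}$-th powers of the input digits. This is proved first in the universal strict $p$-ring: the $p$-adic completion of $\zz[X_i^{p^{-\infty}}]$, which is $p$-torsion free with perfect reduction. We then specialize by the unique homomorphism sending $X_i\mapsto\tau(x_i)$; such a homomorphism exists because $\tau$ is multiplicative. Since the formulas depend only on $\yosmf{k}$, the bijection respects both operations, and $\tau$ determines it uniquely once we require it to induce the identity on the residue field. The uniqueness of the homomorphism rests on the uniqueness of multiplicative lifts.
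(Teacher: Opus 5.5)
Your route is the standard one in Serre's \emph{Local Fields}: Witt vectors for existence, and Teichm\"uller lifts, strict $p$-rings and universal digit formulas for uniqueness. The paper gives no proof of its own and simply cites that source, so your approach is the same as the paper's.

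There is one genuine gap, in the uniqueness part. You assert that $A$ is the valuation ring of its fraction field, i.e.\ $A=\yovring{A}{v}$, and both your uniformizer argument and your Teichm\"uller construction depend on it. This does not follow from the hypotheses as the proposition is worded, because the paper forms $\yorcf{A}{v}$ inside $\mathrm{Frac}(A)$ rather than as a quotient of $A$.

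Here is a counterexample. Suppose $\yosmf{k}\neq\mathbb{F}_p$ and put $A=\mathbb{Z}_p+pW(\yosmf{k})$.
\begin{enumerate}
\item $A$ is a union of cosets of the open subgroup $pW(\yosmf{k})$, so it is closed in $W(\yosmf{k})$ and hence complete.
\item $A$ has characteristic $0$, $v(A\setminus\{0\})=\zz_{\ge 0}$ and $v(p)=1$.
\item Its fraction field is $\mathrm{Frac}(W(\yosmf{k}))$, so $\yorcf{A}{v}=\yosmf{k}$.
\end{enumerate}
Thus $A$ satisfies every stated condition. However, $A$ is a local ring with maximal ideal $pW(\yosmf{k})$ and residue field $\mathbb{F}_p$, so it is not isomorphic to $W(\yosmf{k})$. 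Moreover, no element of $\yosmf{k}\setminus\mathbb{F}_p$ lifts to $A$, so your map $\tau\colon\yosmf{k}\to A$ cannot even be defined.

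So the uniqueness claim is only true under the additional hypothesis that $A$ is a complete discrete valuation ring, as in Serre, Ch.~II, \S5, Thm.~3. You should impose this as an explicit assumption rather than claim to derive it. Your existence part does show that $W(\yosmf{k})$ has this property, since every element there is $p^n u$ with $u$ a unit.

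Once you assume $A$ is a complete discrete valuation ring, $p$ is a uniformizer and $A/pA=\yosmf{k}$. The rest of your argument then goes through: the Teichm\"uller section, the unique $p$-adic Teichm\"uller expansions, specialization of the universal strict $p$-ring formulas, and uniqueness of the isomorphism via uniqueness of multiplicative lifts.
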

%propend

Let
$\yosmf{k}$
be a perfect field of characteristic
$p>0$.
We write 
$\yowitto{\yosmf{k}}$
 for the ring in 
Proposition
 \ref{prop:witt} 
and 
$\yowitt{\yosmf{k}}$
 for its field of fractions. 
The valuation 
$\yowiv{\yosmf{k}}$
on 
$\yowitto{\yosmf{k}}$
extends canonically to 
$\yowitt{\yosmf{k}}$;
we use the same symbol for this extension. 
The ring 
$(\yowitto{\yosmf{k}}, \yowiv{\yosmf{k}})$
is called the 
\emph{ring of Witt vectors associated with 
$\yosmf{k}$}.
Notice that 
for every prime 
$p$,
we have 
$\yowitto{\yogf{p}}=\zz_{p}$
and 
$\yowitt{\yogf{p}}=\yopnbf{p}$,
and the valuation 
$\yowiv{\yogf{p}}$
coincides with 
the
 $p$-adic
valuation
$\yopnbv{p}$.

Now we discuss
the 
$p$-adic
analogue of a Hahn field, 
defined as a quotient of a Hahn ring. 
For 
$G\in \yogset$,
and 
for a 
perfect 
field 
$\yosmf{k}$
 of  characteristic 
 $p>0$,
we define 
a subset 
$\yopideal{G}{\yosmf{k}}$
of 
$\yoha{G}{\yowitto{\yosmf{k}}}$
by 
the 
set of all 
$\alpha=\sum_{g\in G}\alpha_{g}t^{g}\in 
\yoha{G}{\yowitto{\yosmf{k}}}$
such that 
$\sum_{n\in \zz}\alpha_{g+n}p^{n}=0$
in the field of fractions 
$\yowitt{\yosmf{k}}$
for every 
$g\in G$.
For fixed
$g$,
this sum is well-defined: 
the set of integers
$n$
for which
$\alpha_{g+n}\neq0$
is bounded below, 
and the resulting series converges
$p$-adically
in
$\yowitt{\yosmf{k}}$.

%statebody 
%propbegin
\begin{prop}\label{prop:p-adic}
For every 
$G\in \yogset$,
and 
for every  
perfect field 
$\yosmf{k}$
of characteristic 
$p>0$,
the set 
$\yopideal{G}{\yosmf{k}}$
is 
an ideal of the ring 
$\yoha{G}{\yowitto{\yosmf{k}}}$,
and 
$\yoha{G}{\yowitto{\yosmf{k}}}/\yopideal{G}{\yosmf{k}}$
is a 
field
(i.e.,
$\yopideal{G}{\yosmf{k}}$
is maximal).
\end{prop}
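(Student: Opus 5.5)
The plan is to realize $\yopideal{G}{\yosmf{k}}$ as the kernel of a ring homomorphism onto a ring that we then show is a field. Write $W=\yowitto{\yosmf{k}}$ and $F=\yowitt{\yosmf{k}}$, and let $T\subseteq G$ be a set of representatives of $G/\zz$. For $\alpha\in\yoha{G}{W}$ and $g\in G$ put
\[
\Phi(\alpha)(g)=\sum_{n\in\zz}\alpha_{g+n}p^{n}\in F.
\]
The function $\Phi(\alpha)$ satisfies $\Phi(\alpha)(g+1)=p^{-1}\Phi(\alpha)(g)$, so it is determined by its values on $T$. Morally, $\Phi$ is the substitution $t^{1}\mapsto p$. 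By definition $\yopideal{G}{\yosmf{k}}=\Phi^{-1}(0)$, so $\yopideal{G}{\yosmf{k}}$ is an additive subgroup; additivity of $\Phi$ is immediate.

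\textbf{Ideal property.} Given $\alpha\in\yopideal{G}{\yosmf{k}}$ and $\beta\in\yoha{G}{W}$, I would compute
\[
\sum_{n}(\alpha\beta)_{g+n}p^{n}=\sum_{n}\sum_{i+j=g+n}\alpha_{i}\beta_{j}p^{n}.
\]
Set $j=h+m$ with $h$ ranging over the finitely-many-below-each-bound support classes. Regrouping gives
\[
\sum_{j}\beta_{j}p^{\,j-g'}\Bigl(\sum_{m}\alpha_{g-j+m}p^{m}\Bigr)\cdot p^{(\ldots)},
\]
and each inner sum vanishes because $\alpha\in\yopideal{G}{\yosmf{k}}$. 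The rearrangement is the one analytic point. For fixed $g$, only $j$ in a well-ordered set contribute, and the double series converges absolutely in the complete non-Archimedean field $F$, because the $p$-adic valuations of the terms tend to infinity: by well-orderedness of supports, only finitely many pairs $(i,j)$ with $i+j=g+n$ occur for each $n$, and $n$ is bounded below. So Fubini for summable families in $F$ applies.

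\textbf{Maximality.} I would show that every $\alpha\notin\yopideal{G}{\yosmf{k}}$ is invertible modulo $\yopideal{G}{\yosmf{k}}$. First normalize $\alpha$ modulo the ideal into a "reduced" form whose coefficients lie in the Teichmüller set $J\subseteq W$, a complete system of representatives of $\yosmf{k}$ that is closed under multiplication. This uses the carrying procedure: replace $c\,t^{g}$ with $c=[c_0]+p c'$ by $[c_0]t^{g}+c't^{g+1}$, which differs by the element $p\,c't^{g}-c't^{g+1}$. That element lies in $\yopideal{G}{\yosmf{k}}$, since $\Phi$ kills $p\,t^{g}-t^{g+1}$ and $\Phi$ is multiplicative once the ideal property is known. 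Iterating transfinitely along the well-ordered support keeps the supports well-ordered, following Poonen. A reduced nonzero element $\alpha$ has lowest term $[a]t^{g_0}$ with $a\neq0$. Write $\alpha=[a]t^{g_0}(1-\varepsilon)$, where $\varepsilon$ has support in $G_{>0}$. Then $\sum_{n\ge0}\varepsilon^{n}$ is a well-defined element of $\yoha{G}{W}$ by Neumann's lemma on well-ordered sets, and it inverts $1-\varepsilon$ exactly in the Hahn ring. The element $[a]$ is a unit in $W$, and $t^{g_0}$ is a unit. Hence $\alpha$ is a unit of $\yoha{G}{W}$ modulo the ideal, so the quotient is a field.

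The main obstacle is the reduction step. One must show that every class has a representative that is nonzero whenever the class is nonzero. Equivalently, one needs that a reduced element in $\yopideal{G}{\yosmf{k}}$ is $0$. This follows by looking at the minimal support element $g_0$ in a class $g_0+\zz$: the Teichmüller expansion $\sum[a_n]p^n$ in $W$ is unique, so $\Phi(\alpha)(g_0)=0$ forces all $a_{g_0+n}=0$. The delicate part is making the transfinite carrying converge, and I would cite \cite{MR1225257} for the detailed bookkeeping if needed.
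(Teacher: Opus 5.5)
The paper gives no argument of its own here; it only cites Poonen (Proposition 3 and Corollary 3). Your outline is a natural way to unpack that citation.

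\textbf{Ideal property.} This part is correct. The regrouping should simply read $\sum_{n}(\alpha\beta)_{g+n}p^{n}=\sum_{j}\beta_{j}\,\Phi(\alpha)(g-j)$, with no stray $g'$ or $p^{(\ldots)}$. Your observation that each $n$ admits only finitely many pairs $(i,j)$, with $n$ bounded below, makes the family summable, so the regrouping is legitimate.

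\textbf{Field property.} The strategy is right, and it is the mechanism the paper itself uses in Lemma \ref{lem:dfield}\ref{item:ddd4}. A nonzero series with coefficients in $J$ has a unit leading coefficient, and it is inverted in the Hahn ring by $\sum_{n}\varepsilon^{n}$.

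\textbf{The gap: the reduction step.} You misdiagnose this step and then leave it to a citation.
\begin{enumerate}
\item The inversion argument needs only the \emph{existence} of some $\beta\equiv\alpha \pmod{\yopideal{G}{\yosmf{k}}}$ with coefficients in $J$. If the class of $\alpha$ is nonzero, every representative, $\beta$ included, is automatically nonzero.
\item So the statement you call equivalent, that a reduced element of $\yopideal{G}{\yosmf{k}}$ is $0$, is a uniqueness statement and is not what is needed. Its only use is properness, $1\notin\yopideal{G}{\yosmf{k}}$, which is the direct check $\Phi(1)(0)=1$.
\item Existence is exactly where your transfinite carrying would need real bookkeeping. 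In particular, $\alpha-\beta$ would be an infinite combination of the elements $c't^{h}(t-p)$. The ideal property only gives closure under finite sums, so membership in $\yopideal{G}{\yosmf{k}}$ would still need an argument.
\end{enumerate}

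\textbf{How to close it with your own $\Phi$.}
\begin{enumerate}
\item Fix a transversal $T$ of $G/\zz$. For $h\in T$, expand $\Phi(\alpha)(h)=\sum_{n}c_{h,n}p^{n}$ with $c_{h,n}\in J$, and put $\beta_{h+n}=c_{h,n}$.
\item Let $n_{h}=\min\{m\in\zz\mid\alpha_{h+m}\neq0\}$. Each term $\alpha_{h+m}p^{m}$ has valuation at least $n_{h}$, so $c_{h,n}=0$ for $n<n_{h}$.
\item Hence $\supp(\beta)\yosub\supp(\alpha)+\zz_{\ge0}$, which is well-ordered, so $\beta$ lies in the Hahn ring.
\item By construction $\Phi(\beta)=\Phi(\alpha)$ on $T$. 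By the relation $\Phi(\cdot)(g+1)=p^{-1}\Phi(\cdot)(g)$ this equality holds on all of $G$, so $\alpha-\beta\in\yopideal{G}{\yosmf{k}}$.
\end{enumerate}
This is precisely Lemma \ref{lem:repbeta}. With it written out this way, your proof is complete and needs no carrying or limit argument.
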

%propend
%proof 
\begin{proof}
See 
\cite[Proposition 3]{MR1225257}
and 
\cite[Corollary 3]{MR1225257}. 
\end{proof}
%proofend

For subsets 
$A$,
$B$
of 
$\rr$,
we define 
$A+B=\{\, a+b\mid a\in A, b\in B\, \}$.

%state body 
%lembegin
\begin{lem}\label{lem:repbeta}
Let 
$G\in \yogset$,
$p$
be a prime, 
and 
$\yosmf{k}$
be a perfect field 
of characteristic 
$p>0$.
Let 
$\yorep\yosub \yowitto{\yosmf{k}}$
be a 
complete system of representatives of 
$\yosmf{k}$
in
$\yowitto{\yosmf{k}}$.
Then
 every element 
$\alpha=\sum_{g\in G}\alpha_{g}t^{g}\in 
\yoha{G}{\yowitto{\yosmf{k}}}$
is equivalent to 
an element 
$\beta=\sum_{g\in G}\beta_{g}t^{g}$
modulo 
$\yopideal{G}{\yosmf{k}}$,
where 
$\beta_{g}$
 is in 
$\yorep$.
In addition, the family
$\{\beta_{g}\}_{g\in G}$
is unique and 
$\supp(\beta)\yosub \supp(\alpha)+\zz_{\ge 0}$.
\end{lem}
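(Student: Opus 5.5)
The plan is to work coset by coset with respect to $\zz$ inside $G$. Since $\zz\yosub G$, the group $G$ is the disjoint union of cosets $c=g_{c}+\zz$. By definition, the condition defining $\yopideal{G}{\yosmf{k}}$ only involves the coefficients of $\alpha$ along a single coset. For $\alpha\in\yoha{G}{\yowitto{\yosmf{k}}}$ and a coset $c$, put
\[
S_{c}(\alpha)=\sum_{n\in\zz}\alpha_{g_{c}+n}p^{n}\in\yowitt{\yosmf{k}}.
\]
This is well-defined as explained before Proposition \ref{prop:p-adic}. Shifting $g$ inside $c$ multiplies the sum by a power of $p$, so the condition for all $g\in c$ is equivalent to $S_{c}=0$. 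The map $\alpha\mapsto S_{c}(\alpha)$ is additive. Hence $\alpha\equiv\beta$ modulo $\yopideal{G}{\yosmf{k}}$ if and only if $S_{c}(\alpha)=S_{c}(\beta)$ for every coset $c$.

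The key auxiliary fact is the $p$-adic digit expansion in $\yowitt{\yosmf{k}}$. Every $x\in\yowitt{\yosmf{k}}$ with $\yowiv{\yosmf{k}}(x)\ge m$ can be written uniquely as $x=\sum_{n\ge m}j_{n}p^{n}$ with $j_{n}\in\yorep$.
\begin{itemize}
\item For existence, I use Proposition \ref{prop:witt}: $v(p)=1$, the value set is $\zz_{\ge 0}$, and the residue field is $\yosmf{k}$. Given $x$ with $v(x)\ge m$, pick $j_{m}\in\yorep$ with $\yorcff(j_{m})=\yorcff(p^{-m}x)$. Then $v(x-j_{m}p^{m})\ge m+1$, and iterating produces the digits; completeness gives convergence.
\item For uniqueness, take two distinct expansions and look at the first index $n$ where $j_{n}\neq j'_{n}$. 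Since $\yorcff|_{\yorep}$ is injective and $j_{n},j'_{n}\in\yorep$, we get $\yorcff(j_{n}-j'_{n})\neq 0$, so $v(j_{n}-j'_{n})=0$. Then the difference of the two series has valuation exactly $n$, so it is nonzero.
\end{itemize}
The requirement $0\in\yorep$ is what makes finitely supported digit strings sensible, so that zero digits give zero coefficients.

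Now I construct $\beta$. Fix a coset $c$. If $\alpha$ vanishes on $c$, set $\beta=0$ on $c$. Otherwise let $m_{c}$ be the least $n$ with $\alpha_{g_{c}+n}\neq 0$; it exists because $\supp(\alpha)$ is well-ordered. Since all $\alpha_{g}\in\yowitto{\yosmf{k}}$ have valuation $\ge 0$, we get $v(S_{c}(\alpha))\ge m_{c}$. Expand $S_{c}(\alpha)=\sum_{n\ge m_{c}}j_{n}p^{n}$ and set $\beta_{g_{c}+n}=j_{n}$, with $\beta$ equal to $0$ below $m_{c}$. Then every point of $\supp(\beta)\cap c$ has the form $g_{c}+n$ with $n\ge m_{c}$, so it lies in $(g_{c}+m_{c})+\zz_{\ge 0}\yosub\supp(\alpha)+\zz_{\ge 0}$.

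To confirm $\beta\in\yoha{G}{\yowitto{\yosmf{k}}}$, I must check that $\supp(\alpha)+\zz_{\ge 0}$ is well-ordered. This follows from the standard fact that a sum of two well-ordered subsets of $\rr$ is well-ordered. A subset of a well-ordered set is well-ordered, so $\supp(\beta)$ is well-ordered. By construction $S_{c}(\beta)=S_{c}(\alpha)$ for every $c$, so $\alpha-\beta\in\yopideal{G}{\yosmf{k}}$.

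For uniqueness, suppose $\beta$ and $\beta'$ both have coefficients in $\yorep$ and are both equivalent to $\alpha$. Then $S_{c}(\beta)=S_{c}(\beta')$ for every $c$. Each of these is a digit expansion with digits in $\yorep$, bounded below by well-orderedness. The uniqueness of digit expansions then gives $\beta_{g}=\beta'_{g}$ for all $g$.

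I expect the main care to be needed in the digit-expansion lemma, in particular its uniqueness via $0\in\yorep$ and injectivity of $\yorcff|_{\yorep}$. Everything else is bookkeeping over cosets plus the well-ordering of $\supp(\alpha)+\zz_{\ge 0}$.
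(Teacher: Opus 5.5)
Your proof is correct, and it is essentially the argument behind the result the paper cites. The paper gives no proof of its own and refers to Poonen's Proposition 4, whose standard argument runs as yours does: work separately on each coset of $\zz$ in $G$, expand $\sum_{n\in\zz}\alpha_{g+n}p^{n}\in\yowitt{\yosmf{k}}$ uniquely in base $p$ with digits in $\yorep$ (using $0\in\yorep$ and bijectivity of the residue map on $\yorep$), and obtain well-orderedness of $\supp(\beta)$ from $\supp(\beta)\yosub\supp(\alpha)+\zz_{\ge 0}$.
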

%lemend
%proof
\begin{proof}
See 
\cite[Proposition 4]{MR1225257}. 
\end{proof}
%proofend

%dfbe
\begin{df}\label{df:bbbb}
Let 
$G\in \yogset$,
$p$
be a prime, 
and let 
$\yosmf{k}$
be a perfect field of characteristic 
$p>0$.
Fix a complete system 
$\yorep\yosub\yowitto{\yosmf{k}}$
of representatives of 
$\yosmf{k}$.
For every 
$a\in \yoha{G}{\yowitto{\yosmf{k}}}$,
Lemma \ref{lem:repbeta} gives a unique series 
with coefficients in 
$\yorep$
that is equivalent to
 $a$
 modulo 
$\yopideal{G}{\yosmf{k}}$.
We denote this series by 
$\yostan{G}{\yosmf{k}}{\yorep}(a)$
and call it the 
\emph{standard representation of
 $a$
with respect to
$\yorep$}.
It satisfies 
$\supp(\yostan{G}{\yosmf{k}}{\yorep}(a))\yosub \supp(a)+\zz_{\ge 0}$.
\end{df}
%dfed

Let 
$\yopf{G}{\yosmf{k}}{p}$
denote the field
\[
\yoha{G}{\yowitto{\yosmf{k}}}/\yopideal{G}{\yosmf{k}},
\]
and let
\[
\yoproj\colon \yoha{G}{\yowitto{\yosmf{k}}}\to 
\yopf{G}{\yosmf{k}}{p}
\]
be the canonical projection. 

%state body 
%propbegin
\begin{prop}\label{prop:indS}
Let 
$G\in \yogset$,
and 
$\yosmf{k}$
be a perfect field
of characteristic 
$p>0$.
Take a complete system 
$\yorep\yosub \yowitto{\yosmf{k}}$
of representatives of 
$\yosmf{k}$
in 
$\yowitto{\yosmf{k}}$.
Then the following 
statements are true:
%%enube
\begin{enumerate}[label=\textup{(\arabic*)}]

\item 
For every 
$z\in \yoha{G}{\yowitto{\yosmf{k}}}$,
 the value 
$\min \supp(\yostan{G}{\yosmf{k}}{\yorep}(z))$
is 
 independent of 
the choice of 
$\yorep$.

\item 

We define 
the map 
\[
\yopfv{G}{\yosmf{k}}{p}\colon
\yopf{G}{\yosmf{k}}{p}\to G\sqcup\{\infty\}
\]
by 
\[
\yopfv{G}{\yosmf{k}}{p}(x)=\min \supp(\yostan{G}{\yosmf{k}}{\yorep}(z)), 
\]
where 
$z$
belongs to 
$\yoha{G}{\yowitto{\yosmf{k}}}$
and satisfies 
$\yoproj(z)=x$.
Then 
$\yopfv{G}{\yosmf{k}}{p}$
is 
a valuation on 
$\yopf{G}{\yosmf{k}}{p}$.
\end{enumerate}
%enuend
\end{prop}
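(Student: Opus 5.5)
The plan is to give an intrinsic description of $\min\supp(\yostan{G}{\yosmf{k}}{\yorep}(z))$ that uses only the class of $z$ modulo $\yopideal{G}{\yosmf{k}}$ and never mentions $\yorep$. Both (1) and the well-definedness part of (2) then follow at once, and the valuation axioms can be checked on that description.

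\emph{Step 1: the coset invariants.} For $z=\sum_{g}z_{g}t^{g}\in\yoha{G}{\yowitto{\yosmf{k}}}$ and $h\in G$ set
\[
f_{z}(h)=\sum_{n\in\zz}z_{h+n}p^{n}\in\yowitt{\yosmf{k}},
\]
which converges as explained before Proposition \ref{prop:p-adic}. Reindexing gives $f_{z}(h+1)=p^{-1}f_{z}(h)$. Hence for each coset $C=h+\zz$ the quantity
\[
m_{C}(z)=h+\yowiv{\yosmf{k}}(f_{z}(h))\in (C\sqcup\{\infty\})
\]
does not depend on the choice of $h\in C$. By definition, $z-z'\in\yopideal{G}{\yosmf{k}}$ holds exactly when $f_{z}=f_{z'}$. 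So every $m_{C}$ depends only on $\yoproj(z)$, and $m_{C}$ is additive in the sense that $f_{z+z'}=f_{z}+f_{z'}$.

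\emph{Step 2: reading off the support of the standard representation.} Let $\beta=\yostan{G}{\yosmf{k}}{\yorep}(z)$. Every nonzero element of $\yorep$ lifts a nonzero residue, so it is a unit of $\yowitto{\yosmf{k}}$. For $h\in C$, the set $\{n\mid \beta_{h+n}\neq0\}$ is bounded below. Therefore the $p$-adic series $f_{\beta}(h)$ has valuation exactly the least such $n$, because its leading term is a unit times $p^{n}$. This gives $m_{C}(z)=m_{C}(\beta)=\min(\supp(\beta)\cap C)$, with the convention $\min\emptyset=\infty$. Since $\supp(\beta)$ is well-ordered, $\min\supp(\beta)=\min_{C}m_{C}(z)$, and this minimum exists. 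The right-hand side involves neither $\yorep$ nor the representative $z$ of $x$. This proves (1) and shows that $\yopfv{G}{\yosmf{k}}{p}$ is well defined.

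\emph{Step 3: the valuation axioms.} We have $\yopfv{G}{\yosmf{k}}{p}(x)=\infty$ iff $\beta=0$ iff every $f_{z}$ vanishes iff $z\in\yopideal{G}{\yosmf{k}}$ iff $x=0$. For the ultrametric inequality, additivity of $f$ and the ultrametric inequality for $\yowiv{\yosmf{k}}$ give $m_{C}(z+z')\ge m_{C}(z)\land m_{C}(z')$ for every coset $C$. Taking the minimum over $C$ yields $v(x+y)\ge v(x)\land v(y)$.

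Multiplicativity is the main obstacle, since products do not preserve coefficients in $\yorep$. I would handle it by working directly with the product of standard representations rather than its standard representation. Take standard representations $\beta,\beta'$ of $x,y$, with minima $g_{0},g_{0}'$, and put $\gamma=\beta\beta'$, which represents $xy$.
\begin{itemize}
\item Every element of $\supp(\gamma)$ is $\ge g_{0}+g_{0}'$. So for any coset $C$ and $h=\min(C\cap[g_{0}+g_{0}',\infty))$, the series $f_{\gamma}(h)$ involves only indices $n\ge0$, whose coefficients lie in $\yowitto{\yosmf{k}}$. Hence $m_{C}(\gamma)\ge h\ge g_{0}+g_{0}'$.
\item For the coset $C_{0}$ of $g_{0}+g_{0}'$, take $h=g_{0}+g_{0}'$. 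The coefficient $\gamma_{g_{0}+g_{0}'}=\beta_{g_{0}}\beta'_{g_{0}'}$ is a unit, since only the pair $(g_{0},g_{0}')$ contributes to it. Thus $f_{\gamma}(h)$ is a unit and $m_{C_{0}}(\gamma)=g_{0}+g_{0}'$.
\end{itemize}
By Step 2, $v(xy)=\min_{C}m_{C}(\gamma)=g_{0}+g_{0}'=v(x)+v(y)$, which completes the verification that $\yopfv{G}{\yosmf{k}}{p}$ is a valuation.
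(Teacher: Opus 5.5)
Your proof is correct. The paper gives no argument for this proposition and simply cites Poonen's Proposition 5, so your proof is a self-contained substitute rather than a reconstruction of anything in the text.

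Your organizing idea is the coset invariant $m_{C}(z)=h+\yowiv{\yosmf{k}}(f_{z}(h))$. It depends only on $z$ modulo $\yopideal{G}{\yosmf{k}}$, and it satisfies $m_{C}(\beta)=\min(\supp(\beta)\cap C)$ for every series $\beta$ with coefficients in $\yorep$. This yields the formula $\min_{C}m_{C}(z)$, which mentions neither $\yorep$ nor the chosen representative $z$. Consequently:
\begin{itemize}
\item (1) and the well-definedness in (2) are immediate;
\item the ultrametric inequality is inherited coset by coset from $\yowiv{\yosmf{k}}$.
\end{itemize}

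For multiplicativity you rightly avoid computing the standard representation of $\beta\beta'$ and evaluate the invariants on $\beta\beta'$ directly. Both halves are sound: the lower bound via $h=\min(C\cap[g_{0}+g_{0}',\infty))$, and the unit coefficient $\beta_{g_{0}}\beta'_{g_{0}'}$ at the coset $C_{0}$.

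From the paper you use only two facts. Lemma \ref{lem:repbeta} gives the existence of standard representations, which also guarantees that $\min_{C}m_{C}(z)$ is attained. Proposition \ref{prop:p-adic} says $\yopideal{G}{\yosmf{k}}$ is an ideal, so $\beta\beta'$ represents $xy$.

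Two small points:
\begin{itemize}
\item In the multiplicativity step, dispose of the case $x=0$ or $y=0$ separately.
\item In Step 1, the additive map is $z\mapsto f_{z}$, not $m_{C}$. For $m_{C}$ you only get the ultrametric inequality, which is all you later use.
\end{itemize}

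Compared with the citation, your route makes the paper self-contained at this point and describes the valuation without any choice of $\yorep$; the citation buys only brevity.
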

%\propend 
%proofb
\begin{proof}
See \cite[Proposition 5]{MR1225257}. 
\end{proof}
%proofe

We call 
the valued field 
\[
(\yopf{G}{\yosmf{k}}{p}, \yopfv{G}{\yosmf{k}}{p})
\]
the 
\emph{$p$-adic
Mal'cev--Neumann field}
or 
\emph{$p$-adic
Hahn field}. 
In particular, 
$(\yopf{\zz}{\yogf{p}}{p}, \yopfv{\zz}{\yogf{p}}{p})$
is nothing but 
the 
field
$(\yopnbf{p}, \yopnbv{p})$
 of 
$p$-adic
numbers. 

To consider 
characteristics 
of a valued field and 
its residue class field, 
we additionally 
define 
$\yoqpset$
by the 
 set of all pairs
$(q, p)$
such that
$q$
and 
$p$
are 
$0$
or a prime 
satisfying either of the following conditions: 
%enubegin
\begin{enumerate}[label=\textup{(Q\arabic*)}]

\item\label{item:condq1} 
$q=p$,

\item\label{item:condq2} 
$q=0$
and 
$0<p$.
\end{enumerate}
Note that 
$(q, p)\in \yoqpset$
 satisfies 
\ref{item:condq2}  if and only if 
$q\neq p$.
%enuend

In order to discuss 
$p$-adic
and 
ordinary 
Hahn fields in 
a
unified manner, 
we introduce the following 
notation.

%dfbegin
\begin{df}\label{df:uk}
Let 
$G\in \yogset$,
$(q, p)\in \yoqpset$,
and 
let 
$\yosmf{k}$
 be  a 
 perfect field 
 of characteristic 
 $p$.
We define a
field 
$\youhaq{G}{\yosmf{k}}{q}{p}$
 by
\[
\youhaq{G}{\yosmf{k}}{q}{p}
=\begin{cases}
\yoha{G}{\yosmf{k}} & \text{if
$q=p$;}\\
\yopf{G}{\yosmf{k}}{p} & \text{if
$q\neq p$.}
\end{cases}
\]
We also define a valuation
$\youhavq{G}{\yosmf{k}}{q}{p}$
on 
$\youhaq{G}{\yosmf{k}}{q}{p}$
 by 
\[
\youhavq{G}{\yosmf{k}}{q}{p}
=\begin{cases}
\yohav{G}{\yosmf{k}} & \text{if
$q=p$;}\\
\yopfv{G}{\yosmf{k}}{p} & \text{if
$q\neq p$.}
\end{cases}
\]
\end{df}
%dfed

A metric space is called \emph{spherically complete} if every decreasing
sequence of non-empty closed or open balls has non-empty intersection.

\begin{prop}\label{prop:hahnspherical}
Let 
$G\in\yogset$,
$(q,p)\in\yoqpset$,
and let 
$\yosmf{k}$
 be a perfect
field of characteristic 
$p$.
Then
\begin{enumerate}[label=\textup{(\arabic*)}]
\item
$\youhavq{G}{\yosmf{k}}{q}{p}
(\youhaq{G}{\yosmf{k}}{q}{p}\setminus\{0\})=G$;
\item
$\yorcf{\youhaq{G}{\yosmf{k}}{q}{p}}
{\youhavq{G}{\yosmf{k}}{q}{p}}=\yosmf{k}$;
\item
$(\youhaq{G}{\yosmf{k}}{q}{p},
\youhavq{G}{\yosmf{k}}{q}{p})$
is spherically complete.
\end{enumerate}
\end{prop}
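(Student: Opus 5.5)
I will treat the two cases of Definition \ref{df:uk} separately, following the same three-step pattern in each: value group, residue field, spherical completeness.

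\emph{Case $q=p$.} Here the field is $\yoha{G}{\yosmf{k}}$. For (1), $\yohav{G}{\yosmf{k}}(a)=\min\supp(a)\in G$ for $a\neq 0$, and every $g\in G$ is attained by the monomial $t^{g}$; this gives the value set $G$. For (2), I will invoke Proposition \ref{prop:prophahn}: the residue map sends a series of non-negative valuation to its coefficient at $t^{0}$.

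For (3), I will first reduce to closed balls. Values lie in $G$, so an open ball $U(a,\yoratio^{-r})$ is a union of closed balls of smaller radii. It then suffices to treat a strictly decreasing chain $B(a_{i},r_{i})$, indexed by an ordinal, with radii corresponding to valuations $\gamma_{i}$ increasing towards some $\gamma\le\infty$; a stationary chain is trivial. I will construct the intersection point $c$ coefficientwise. For $g<\gamma_{i}$, set $c(g)=a_{i}(g)$. This is consistent, since $v(a_{i}-a_{j})\ge\gamma_{i}$ for $j\ge i$ forces $a_{i}$ and $a_{j}$ to agree below $\gamma_{i}$. For $g\ge\gamma$, set $c(g)=0$.

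The key point is that $\supp(c)$ is well-ordered. Take a descending sequence in $\supp(c)$. Its first element $g_{0}$ lies below some $\gamma_{i}$, so every later element also lies below $\gamma_{i}$ and belongs to $\supp(a_{i})$. Since $\supp(a_{i})$ is well-ordered, the sequence stabilizes. Finally, $v(c-a_{i})\ge\gamma_{i}$, so $c$ lies in every ball. This is Kaplansky's classical maximal completeness argument.

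\emph{Case $q\neq p$.} Here the field is $\yopf{G}{\yosmf{k}}{p}$. I fix a system of representatives $\yorep$ and use Lemma \ref{lem:repbeta} and Definition \ref{df:bbbb}: each element has a unique standard representation with coefficients in $\yorep$. The valuation is $\min\supp$ of this representation by Proposition \ref{prop:indS}.

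For (1), the element $\yoproj(t^{g})$ with $1\in\yorep$ has standard representation $t^{g}$ (taking $1$ as the representative of $1$; the valuation is independent of $\yorep$), so its valuation is $g$. For (2), I will show the map $x\mapsto \yorcff(\beta_{0})$ is a ring surjection onto $\yosmf{k}$ with kernel the maximal ideal. Here $\beta$ is the standard representation of $x$ with $v(x)\ge0$. It is a homomorphism because addition and multiplication of representatives only create carries into coefficients at $g+n$ with $n\ge1$. Surjectivity comes from elements $\yoproj(j)$ with $j\in\yorep$.

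The main obstacle is (3) in this case, since the standard representation is not additive. I will handle it through the following key claim: if the standard representations of $x$ and $y$ are $\beta$ and $\beta'$, then $v(x-y)\ge\gamma$ if and only if $\beta$ and $\beta'$ agree on $G\cap(-\infty,\gamma)$.

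\emph{Proving the claim.} For the forward direction, let $g<\gamma$ be the first index where $\beta$ and $\beta'$ differ. Then $\beta-\beta'$ has leading coefficient at $g$ equal to a difference of distinct representatives, which is a unit in $\yowitto{\yosmf{k}}$. Its standardization then keeps a nonzero coefficient at $g$, because carries only move weight upward by $\zz_{\ge0}$ (support inclusion in Lemma \ref{lem:repbeta}). Hence $v(x-y)=g<\gamma$. The converse follows from the same support inclusion.

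\emph{Concluding (3).} With the claim in hand, the same coefficientwise limit construction as in the first case works verbatim on standard representations. The limit $c$ has coefficients in $\yorep$ and well-ordered support, so it is its own standard representation. By the claim, $v(\yoproj(c)-x_{i})\ge\gamma_{i}$ for each $i$, so $\yoproj(c)$ lies in every ball.
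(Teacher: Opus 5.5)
Your proof is correct, but it takes a different route from the paper. The paper treats (1) and (2) as immediate from the construction and obtains (3) entirely by citation. It uses Poonen's theorem that the ordinary and $p$-adic Mal'cev--Neumann fields are maximally complete, together with Kaplansky's theorem that maximal completeness is equivalent to every pseudo-convergent sequence having a limit, i.e.\ to spherical completeness.

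You instead prove everything by hand. In $\yoha{G}{\yosmf{k}}$ you run Kaplansky's coefficientwise limit construction. For $q\neq p$ you observe that $v(x-y)\ge\gamma$ holds exactly when the standard representations of $x$ and $y$ agree on $G\cap(-\infty,\gamma)$. That claim says $x\mapsto\yostan{G}{\yosmf{k}}{\yorep}(x)$ is an isometry onto the space of $\yorep$-valued functions on $G$ with well-ordered support, under the first-difference ultrametric. So the $p$-adic case reduces verbatim to the equal-characteristic one. This is the same coefficient comparison the paper itself uses in Lemma~\ref{lem:cond1isom}.

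What each route buys: yours is self-contained and makes the later arguments more uniform. The paper's is shorter and imports the stronger standard statement (no proper immediate extensions) from the literature.

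Two steps deserve one more line each.

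First, the reduction from open to closed balls does not follow from ``values lie in $G$''. Use instead that nested balls with eventually constant radius are eventually equal. Otherwise, pass to a subsequence with strictly decreasing radii; then $B(a_{n+1},r_{n+1})\subseteq B_n\subseteq B(a_n,r_n)$. Alternatively, run your construction directly on open balls.

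Second, in the key claim, the support inclusion of Lemma~\ref{lem:repbeta} only shows that the standardization of $\beta-\beta'$ has no support below $g$. The nonvanishing at $g$ comes from the defining condition of $\yopideal{G}{\yosmf{k}}$ on the coset $g+\zz$. That condition forces the new coefficient at $g$ to be congruent to $\beta_g-\beta'_g$ modulo $p$, hence a unit.
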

\begin{proof}
The first two assertions follow from the construction.
The last follows
from \cite[Theorem 1]{MR1225257} and \cite[Theorem 4]{MR0006161}; see also
\cite[Theorem 6.11]{MR3782290}.
\end{proof}

%statebody
%propbegin
\begin{prop}\label{prop:embedcoef}
Let 
$G\in \yogset$,
$(q, p)\in \yoqpset$,
and let 
$\yosmf{k}$
be a perfect field of characteristic 
$p$.
Assume that 
$q\neq p$.
Then 
$(\youhaq{G}{\yosmf{k}}{q}{p},
\youhavq{G}{\yosmf{k}}{q}{p})$
is a valued field extension of 
$(\yowitt{\yosmf{k}}, \yowiv{\yosmf{k}})$.
In particular, 
we can regard 
$\yowitto{\yosmf{k}}$
as a subring of 
\[
\yovring{\youhaq{G}{\yosmf{k}}{q}{p}}
{\youhavq{G}{\yosmf{k}}{q}{p}}.
\]
\end{prop}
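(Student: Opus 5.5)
We have to show that, when $q\neq p$, the composite map $\yowitto{\yosmf{k}}\to\yoha{G}{\yowitto{\yosmf{k}}}\xrightarrow{\yoproj}\yopf{G}{\yosmf{k}}{p}$ is an injective ring homomorphism that preserves valuations. Once this is done, the rest follows formally. The embedding then extends uniquely to the field of fractions $\yowitt{\yosmf{k}}$. It still preserves valuations there, since $v(b/a)=v(b)-v(a)$ on both sides. Moreover, the image of $\yowitto{\yosmf{k}}$ has non-negative valuation, so it lies in $\yovring{\youhaq{G}{\yosmf{k}}{q}{p}}{\youhavq{G}{\yosmf{k}}{q}{p}}$.

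The composite is a ring homomorphism because $\yowitto{\yosmf{k}}$ sits in $\yoha{G}{\yowitto{\yosmf{k}}}$ as the constant series $c\,t^{0}$, which is a subring, and $\yoproj$ is a ring homomorphism. The key step is to compute the valuation of the image, which we do in three steps.

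\begin{enumerate}
\item Take $c\in\yowitto{\yosmf{k}}$ with $c\neq0$, and write $c=\sum_{n\ge0}r_{n}p^{n}$ with $r_{n}\in\yorep$. This uses only completeness, $\yowiv{\yosmf{k}}(p)=1$ and the residue field $\yosmf{k}$ from Proposition \ref{prop:witt}, via the usual successive approximation. Let $m=\yowiv{\yosmf{k}}(c)$; then $r_{n}=0$ for $n<m$ and $r_{m}\neq0$.

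\item Consider $\beta=\sum_{n\ge0}r_{n}t^{n}\in\yoha{G}{\yowitto{\yosmf{k}}}$; its support lies in $\zz_{\ge0}\yosub G$, so it is well-ordered. We check that $c\,t^{0}-\beta\in\yopideal{G}{\yosmf{k}}$ directly from the definition.
\begin{itemize}
\item For $g\in\zz$, the coefficient of $c\,t^{0}-\beta$ at $g+n$ is nonzero only when $g+n\ge0$ is an integer. Then $\sum_{n}(\cdots)_{g+n}p^{n}=p^{-g}\bigl(c-\sum_{j\ge0}r_{j}p^{j}\bigr)=0$.
\item For $g\notin\zz$, all the relevant coefficients vanish.
\end{itemize}
Hence $\beta=\yostan{G}{\yosmf{k}}{\yorep}(c\,t^{0})$ by the uniqueness in Lemma \ref{lem:repbeta}.

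\item By Proposition \ref{prop:indS}, $\yopfv{G}{\yosmf{k}}{p}(\yoproj(c))=\min\supp(\beta)=m=\yowiv{\yosmf{k}}(c)$.
\end{enumerate}

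Injectivity follows immediately: a nonzero $c$ has finite valuation, so its image is nonzero.

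The main obstacle is the identification in step 2. One must handle the sign convention in the definition of $\yopideal{G}{\yosmf{k}}$ carefully, namely the shift $g+n$ paired with $p^{n}$, so that $t$ acts as $p$ and the series collapse to $c-\sum r_{j}p^{j}$. One must also make sure the $p$-adic expansion with digits in the arbitrary system $\yorep$ converges to $c$. Alternatively, one can cite Poonen's \cite{MR1225257} observation that $t\equiv p$ modulo $\yopideal{G}{\yosmf{k}}$ and argue with that congruence.
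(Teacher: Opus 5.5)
Your proof is correct, but it takes a more hands-on route than the paper.

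The paper argues by functoriality in the value group. It identifies $\yowitt{\yosmf{k}}$ with the $p$-adic Hahn field $\youhaq{\zz}{\yosmf{k}}{0}{p}$, and says that the inclusion $\zz\subseteq G$ (extending series by zero) induces a valued field embedding into $\youhaq{G}{\yosmf{k}}{0}{p}$. None of the following is checked there: that identification, that extension by zero maps $\yopideal{\zz}{\yosmf{k}}$ into $\yopideal{G}{\yosmf{k}}$, or that it preserves standard representations and hence valuations.

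You bypass the intermediate field $\youhaq{\zz}{\yosmf{k}}{0}{p}$ altogether. You embed $\yowitto{\yosmf{k}}$ as constant series and compute $\yostan{G}{\yosmf{k}}{\yorep}(c\,t^{0})=\sum_{n\ge0}r_{n}t^{n}$ from the $p$-adic digit expansion of $c$. You verify membership in $\yopideal{G}{\yosmf{k}}$ separately for $g\in\zz$ and $g\notin\zz$, and then read off the valuation from Proposition \ref{prop:indS}.

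That computation is exactly what the paper's unstated claims rest on. So your version is self-contained where the paper's is a sketch, while the paper's version is shorter and makes the naturality in $G$ visible. Both give the same map: under the paper's identification, $c$ corresponds to $\yoproj(c\,t^{0})$. The later use of this embedding, for instance in Definition \ref{df:tau}, is therefore unaffected.
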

%propend
%proofbegin
\begin{proof}
Since 
$q\neq p$,
we have 
$q=0$
and 
$p>0$.
By the definition of 
$\yogset$,
we have 
$\zz\yosub G$.
Since 
$\yowitt{\yosmf{k}}=\youhaq{\zz}{\yosmf{k}}{q}{p}$,
the inclusion 
$\zz\yosub G$
induces a valued field embedding
\[
(\yowitt{\yosmf{k}}, \yowiv{\yosmf{k}})
\hookrightarrow
(\youhaq{G}{\yosmf{k}}{q}{p},
\youhavq{G}{\yosmf{k}}{q}{p}).
\]
Thus 
$(\youhaq{G}{\yosmf{k}}{q}{p},
\youhavq{G}{\yosmf{k}}{q}{p})$
is a valued field extension of 
$(\yowitt{\yosmf{k}}, \yowiv{\yosmf{k}})$.
The last assertion follows from the inclusion 
$\yowitto{\yosmf{k}}\yosub \yowitt{\yosmf{k}}$.
\end{proof}
%proofend

%dfbe
\begin{df}\label{df:tau}
Let
$G\in \yogset$,
$(q, p)\in \yoqpset$,
and let
$\yosmf{k}$
be a perfect field of characteristic
$p$.
We fix the following conventions for the rest of the paper. 
If
$q=p$,
we use the canonical system of representatives 
$\yorep=\yosmf{k}\subset\yoha{G}{\yosmf{k}}$
and put 
$\youit^{g}=t^{g}$
for every
$g\in G$.
If
$q\neq p$,
we fix a complete system 
$\yorep\subset\yowitto{\yosmf{k}}$
of representatives of
$\yosmf{k}$,
regard it as a subset of
$\yopf{G}{\yosmf{k}}{p}$
through 
Proposition \ref{prop:embedcoef}, and put
$\youit^{g}=\yoproj(t^{g})$
 for every 
$g\in G$.
Since
$t-p\in\yopideal{G}{\yosmf{k}}$,
we have 
$\youit=\yoproj(t)=p$
 in this case. 
For
$y\in\youhaq{G}{\yosmf{k}}{q}{p}$,
there is a unique expansion 
\[
y=\sum_{g\in G}\yocoef{y}{g}\youit^{g}
\]
with
$\yocoef{y}{g}\in\yorep$.
When
$q=p$,
this is the usual Hahn-series expansion. 
When
$q\neq p$,
the displayed equality means
\[
y=\yoproj\left(\sum_{g\in G}\yocoef{y}{g}t^{g}\right),
\]
where the series on the right is the unique standard representation
supplied by Lemma \ref{lem:repbeta}. 
The coefficients in the latter case depend on the fixed system
$\yorep$,
which will not be changed below.
\end{df}
%dfend

\subsubsection{Levi--Civita fields}
We next discuss Levi--Civita fields and 
$p$-adic
 Levi--Civita fields, 
which will be 
 used in 
Section 
\ref{sec:Urysohn}.

Let 
$G\in \yogset$,
and 
$\yosmf{k}$
 be a field. 
For 
 $f\in \yoha{G}{\yosmf{k}}$,
we consider the following condition. 
\begin{enumerate}[label=\textup{(Fin)}]
\item\label{item:19finc}
For every 
$n\in \zz$,
the set
$\supp(f)\cap (-\infty, n]$
is finite. 
\end{enumerate}
In what follows, 
for the zero element, 
we put 
\[
\min \supp(0)=\infty.
\]
We use the convention that 
$\infty>g$
for every
$g\in G$.
We denote by
  $\yolc{G}{\yosmf{k}}$
the set of all 
$f\in \yoha{G}{\yosmf{k}}$
satisfying 
the condition 
\ref{item:19finc}. 
For the next lemma, 
we refer the reader to 
\cite[Theorem 3.18]{MR3782290}. 
\begin{lem}\label{lem:levicivita}
Let
$G\in \yogset$,
$\yosmf{k}$
be  a field. 
Then 
the set 
$\yolc{G}{\yosmf{k}}$
is a subfield of 
$\yoha{G}{\yosmf{k}}$.
\end{lem}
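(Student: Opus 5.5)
We will show that $\yolc{G}{\yosmf{k}}$ contains $0$ and $1$, is closed under subtraction and multiplication, and that every non-zero element has its inverse (taken in the field $\yoha{G}{\yosmf{k}}$ from Proposition \ref{prop:prophahn}) again in $\yolc{G}{\yosmf{k}}$. Since \ref{item:19finc} is phrased with integer thresholds and $\zz\yosub G$, it is equivalent to: $\supp(f)$ is finite or is an increasing sequence tending to $+\infty$, i.e. $\supp(f)\cap(-\infty,r]$ is finite for every $r\in\rr$. We use this form throughout.

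For subtraction, $\supp(f-h)\yosub\supp(f)\cup\supp(h)$, a union of two sets each meeting $(-\infty,r]$ in a finite set. For multiplication, $\supp(fh)\yosub\supp(f)+\supp(h)$. Both supports are bounded below, say by $m_f$ and $m_h$. If $a+b\le r$ with $a\in\supp(f)$ and $b\in\supp(h)$, then $a\le r-m_h$ and $b\le r-m_f$. Hence only finitely many $a$ and finitely many $b$ can occur, so $(\supp(f)+\supp(h))\cap(-\infty,r]$ is finite.

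For inverses, let $f\ne0$ with $g_0=\min\supp(f)$ and $c=f(g_0)\neq0$. Write $f=ct^{g_0}(1-\varepsilon)$, where $\varepsilon\in\yolc{G}{\yosmf{k}}$ and $S:=\supp(\varepsilon)\yosub(0,\infty)$. Since $c^{-1}t^{-g_0}$ clearly lies in $\yolc{G}{\yosmf{k}}$, it suffices to show that $(1-\varepsilon)^{-1}$ lies there. In $\yoha{G}{\yosmf{k}}$ this inverse is the formal geometric series $\sum_{n\ge0}\varepsilon^n$; this is how Hahn-field inversion works, and the series is well-defined by Neumann's lemma. Its support lies in the additive monoid $\langle S\rangle$ generated by $S$.

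The main step is to show that $\langle S\rangle\cap(-\infty,r]$ is finite for every $r$. Let $\delta=\min S>0$. An element of $\langle S\rangle$ that is at most $r$ is a sum of at most $\lfloor r/\delta\rfloor$ elements of $S$. Each summand must also be at most $r$, so it lies in the finite set $S\cap(0,r]$. There are therefore only finitely many such sums, which proves finiteness. Since a finite union of sets with this property again has it, $(1-\varepsilon)^{-1}\in\yolc{G}{\yosmf{k}}$. We also confirm that the geometric series equals the Hahn inverse: for fixed $g$, only the finitely many $n\le g/\delta$ contribute to the coefficient at $t^{g}$, so $(1-\varepsilon)\sum_{n}\varepsilon^n=1$ holds by direct coefficientwise telescoping. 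This bounded-length argument is the only nontrivial point; everything else is routine.
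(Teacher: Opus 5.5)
Your proof is correct and takes essentially the same approach as the paper. The paper only cites \cite[Theorem 3.18]{MR3782290} for this lemma, but its proof of the $p$-adic analogue, Lemma \ref{lem:dfield}, runs the same steps: support inclusions for sums and products, the normalization $f=ct^{g_0}(1-\varepsilon)$, and the geometric series $\sum_{n}\varepsilon^{n}$ with only finitely many powers contributing below each threshold. One small strength of your write-up is the restatement of \ref{item:19finc} with real thresholds, which is what makes the finiteness of $(\supp(f)+\supp(h))\cap(-\infty,r]$ go through cleanly in the product step.
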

We call 
$\yolc{G}{\yosmf{k}}$
the 
\emph{Levi--Civita field associated with
$G$
and
$\yosmf{k}$}.

Fix
$(q, p)\in \yoqpset$
with
$q\neq p$
and 
assume that
$\yosmf{k}$
has 
characteristic 
$p>0$.
Before defining a 
$p$-adic
analogue
of Levi--Civita fields, 
we define a subset 
$\yopd{G}{\yosmf{k}}$
of 
$\yoha{G}{\yowitto{\yosmf{k}}}$
by the set of all 
$f\in \yoha{G}{\yowitto{\yosmf{k}}}$
satisfying the
condition 
\ref{item:19finc}. 
We define
$\yoplc{G}{\yosmf{k}}{p}$
by 
$\yoplc{G}{\yosmf{k}}{p}=
\yoproj(\yopd{G}{\yosmf{k}})$, 
where 
\[
\yoproj\colon \yoha{G}{\yowitto{\yosmf{k}}}\to 
\yopf{G}{\yosmf{k}}{p}=\yoha{G}{\yowitto{\yosmf{k}}}/\yopideal{G}{\yosmf{k}}
\]
is 
the canonical projection. 

%lembe 
\begin{lem}\label{lem:dfield}
Let 
$G\in \yogset$,
$p$
be a prime, 
and 
$\yosmf{k}$
be a perfect field of 
characteristic 
$p>0$.
Then 
the following 
statements are true:
%enube
\begin{enumerate}[label=\textup{(\arabic*)}]
\item\label{item:ddd1}
For every complete system 
$\yorep\yosub \yowitto{\yosmf{k}}$
of representatives of 
$\yosmf{k}$,
and for 
every
$a\in \yopd{G}{\yosmf{k}}$,
the member 
 \[
 f=\yostan{G}{\yosmf{k}}{\yorep}(a)
 \in \yoha{G}{\yowitto{\yosmf{k}}}
 \]
 satisfies the condition 
\ref{item:19finc}
and 
$f(g)\in \yorep$
for all
 $g\in G$.

\item\label{item:ddd2}
The set
$\yopd{G}{\yosmf{k}}$
is a 
subring of 
$\yoha{G}{\yowitto{\yosmf{k}}}$;

\item\label{item:ddd3}
If 
$a\in \yopd{G}{\yosmf{k}}$
satisfies 
$\min \supp(a)>0$,
then 
\[
(1-a)^{-1}\in \yopd{G}{\yosmf{k}}.
\]

\item\label{item:ddd4}
For every
$a\in \yopd{G}{\yosmf{k}}$
with 
$\yoproj(a)\neq 0$,
there exists
$b\in \yopd{G}{\yosmf{k}}$
such that 
$ab$
is equivalent to 
$1$
modulo 
$\yopideal{G}{\yosmf{k}}$.

\end{enumerate}
%enued
\end{lem}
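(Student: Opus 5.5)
The plan is to reduce every assertion to elementary bookkeeping with supports, using Lemma \ref{lem:repbeta} and the ring structure of $\yoha{G}{\yowitto{\yosmf{k}}}$. The one observation used throughout is this: if $S\yosub G$ satisfies \ref{item:19finc} and is non-empty, then $S$ has a minimum $m$. Moreover, if $T$ also satisfies \ref{item:19finc} with minimum $m'$, then $S+T$ satisfies \ref{item:19finc}. Indeed, if $s+u\le n$ with $s\in S$ and $u\in T$, then $s\le n-m'$ and $u\le n-m$, so only finitely many pairs $(s,u)$ occur.

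For \ref{item:ddd1}, the coefficients of $f$ lie in $\yorep$ by Definition \ref{df:bbbb}. For \ref{item:19finc}, we use $\supp(f)\yosub\supp(a)+\zz_{\ge 0}$. If $s+m\le n$ with $s\in\supp(a)$ and $m\in\zz_{\ge0}$, then $s\le n$, which leaves finitely many $s$. For each such $s$, only the integers $0\le m\le n-s$ are allowed, again finitely many. Hence $\supp(f)\cap(-\infty,n]$ is finite.

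For \ref{item:ddd2}, note that $1\in\yopd{G}{\yosmf{k}}$, that $\supp(a\pm b)\yosub\supp(a)\cup\supp(b)$, and that $\supp(ab)\yosub\supp(a)+\supp(b)$. The observation above then gives closure under products.

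For \ref{item:ddd3}, let $\varepsilon=\min\supp(a)>0$. In the Hahn ring, $(1-a)^{-1}=\sum_{k\ge0}a^{k}$. This sum is locally finite because $\supp(a^{k})\yosub[k\varepsilon,\infty)$, so each coefficient is a finite sum; it is a legitimate element of $\yoha{G}{\yowitto{\yosmf{k}}}$ by the standard Neumann-type argument underlying the field structure of Hahn series (cf.\ Proposition \ref{prop:prophahn} and \cite{MR1225257}). Its support lies in $\bigcup_{k}\supp(a^{k})$. Below a fixed $n$, only the indices $k\le n/\varepsilon$ contribute, and each such $\supp(a^{k})$ meets $(-\infty,n]$ in a finite set by \ref{item:ddd2}. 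Hence \ref{item:19finc} holds. This is the step I expect to need the most care, specifically checking that the geometric series converges in the Hahn ring with coefficients in $\yowitto{\yosmf{k}}$ rather than in a field. Since $1$ is the leading coefficient of $1-a$, no division in $\yowitto{\yosmf{k}}$ is needed.

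For \ref{item:ddd4}, first replace $a$ by $f=\yostan{G}{\yosmf{k}}{\yorep}(a)$, which lies in $\yopd{G}{\yosmf{k}}$ by \ref{item:ddd1} and satisfies $f\equiv a$ modulo $\yopideal{G}{\yosmf{k}}$. Since $\yoproj(a)\neq0$, we have $f\neq0$. Let $g_0=\min\supp(f)$ and $c=f(g_0)\in\yorep\setminus\{0\}$. Since $\yorcff(c)\neq0$, $c$ is a unit of $\yowitto{\yosmf{k}}$. Write $f=ct^{g_0}(1-h)$, where $h=1-c^{-1}t^{-g_0}f$. By \ref{item:ddd2} we have $h\in\yopd{G}{\yosmf{k}}$, and $\min\supp(h)>0$. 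Put $b=c^{-1}t^{-g_0}(1-h)^{-1}$, which lies in $\yopd{G}{\yosmf{k}}$ by \ref{item:ddd2} and \ref{item:ddd3}. Then $fb=1$ exactly. Since $\yopideal{G}{\yosmf{k}}$ is an ideal (Proposition \ref{prop:p-adic}), $ab\equiv fb=1$ modulo $\yopideal{G}{\yosmf{k}}$.
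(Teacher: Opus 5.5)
Your proposal is correct and follows the paper's proof essentially step for step. You use the inclusion $\supp(f)\subseteq\supp(a)+\zz_{\ge 0}$ for (1), the support bounds $\supp(a\pm b)\subseteq\supp(a)\cup\supp(b)$ and $\supp(ab)\subseteq\supp(a)+\supp(b)$ for (2), the geometric series $\sum_{k\ge0}a^{k}$ with $\supp(a^{k})\subseteq[k\varepsilon,\infty)$ for (3), and the factorization $f=ct^{g_0}(1-h)$ of the standard representation for (4), just as the paper does. You also spell out why $S+T$ and $\supp(a)+\zz_{\ge 0}$ inherit condition \ref{item:19finc}, which the paper leaves implicit.
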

%lemed
%proofbe 
\begin{proof}
Fix a complete system 
$\yorep\yosub \yowitto{\yosmf{k}}$
of representatives of 
$\yosmf{k}$.
First we prove 
\ref{item:ddd1}. 
Put 
$f=\yostan{G}{\yosmf{k}}{\yorep}(a)$.
By Lemma 
\ref{lem:repbeta}, 
we see that 
$f(g)\in \yorep$
for all 
$g\in G$.
Put 
$A=\supp(a)$
and 
$F=\supp(f)$.
Lemma 
\ref{lem:repbeta} 
also 
shows that 
$F\yosub A+\zz_{\ge 0}$.
Due to this relation, 
since
$A$
satisfies 
the condition 
\ref{item:19finc}, 
so does
$F$.
Hence 
the statement 
\ref{item:ddd1} 
is true. 

Next we prove 
\ref{item:ddd2}. 
By the definitions of 
$\yopd{G}{\yosmf{k}}$
and 
$\yolc{G}{\yowitt{\yosmf{k}}}$,
we have 
$\yopd{G}{\yosmf{k}}=\yoha{G}{\yowitto{\yosmf{k}}}\cap 
\yolc{G}{\yowitt{\yosmf{k}}}$
(pay attention to the difference between
$\yowitto{\yosmf{k}}$
and
$\yowitt{\yosmf{k}}$
appearing in
$\yoha{G}{\yowitto{\yosmf{k}}}$
and 
 $\yolc{G}{\yowitt{\yosmf{k}}}$,
 respectively). 
 We directly check that 
 $\yopd{G}{\yosmf{k}}$
 is closed under the ring operations. 
Let 
$a,b\in \yopd{G}{\yosmf{k}}$.
Then 
$\supp(a+b)\yosub \supp(a)\cup \supp(b)$,
and hence 
$a+b$
 satisfies 
\ref{item:19finc}. 
The same is clear for 
$-a$.
For multiplication, 
we have 
$\supp(ab)\yosub \supp(a)+\supp(b)$.
Hence
$ab$
satisfies 
\ref{item:19finc}. 
Therefore 
$\yopd{G}{\yosmf{k}}$
is a subring of 
the ring
$\yoha{G}{\yowitto{\yosmf{k}}}$.

Now we show \ref{item:ddd3}. 
Put 
$m=\min \supp(a)$.
If
$a=0$,
then the assertion is trivial. 
Assume that
$a\neq 0$.
As in \cite{MR1225257}, 
we have 
\[
(1-a)^{-1}=1+a+a^{2}+a^{3}+\cdots
\]
in 
$\yoha{G}{\yowitto{\yosmf{k}}}$.
For every 
$i\in \zz_{\ge 0}$,
put 
$A_{i}=\supp(a^{i})$.
In this case, we have 
$i\cdot m\le \min A_{i}$
for all 
$i\in \zz_{\ge 1}$.
Put 
$B=\bigcup_{i\in \zz_{\ge 1}} A_{i}$.
For a fixed
$n\in\zz$,
only finitely many indices
$i\ge1$
satisfy
$im\le n$.
Moreover, each set
$A_{i}\cap(-\infty,n]$
is finite by 
\ref{item:ddd2}. 
It follows that
$B\cap(-\infty,n]$
is finite. 
Put 
$E=\supp((1-a)^{-1})$.
Then 
$E\yosub \{0\}\cup B$,
and hence 
$E$
satisfies 
that 
$(-\infty, n]\cap E$
is finite 
for all 
$n\in \zz$.
This implies that 
$(1-a)^{-1}\in \yopd{G}{\yosmf{k}}$.

We shall prove \ref{item:ddd4}. 
Put 
$f=\yostan{G}{\yosmf{k}}{\yorep}(a)$.
Since 
$\yoproj(a)\neq 0$,
we have 
$f\neq 0$.
We only need to show 
that 
$f$
is invertible 
in 
$\yopd{G}{\yosmf{k}}$.
We represent
 $f$
 as
$f=\sum_{g\in G}z_{g}t^{g}$.
Take
 $m=\min \supp(f)$
and 
put 
$y=\sum_{m<g}z_{g}t^{g-m}$.
Since 
$\yowiv{\yosmf{k}}(z_{m})=0$,
we see that 
$z_{m}$
is invertible in 
$\yowitto{\yosmf{k}}$.
Then 
\[
f=z_{m}t^{m}(1-(-z_{m}^{-1}y)). 
\]
If 
$y\neq 0$,
then 
\[
\min \supp(z_{m}^{-1}y)>0.
\]
Since 
$z_{m}$
and 
$t^{m}$
 are invertible in 
$\yopd{G}{\yosmf{k}}$,
and since 
$1-(-z_{m}^{-1}y)$
is invertible by 
\ref{item:ddd3} when 
$y\neq 0$
and trivially invertible when
 $y=0$,
we see that 
$f$
is invertible in 
$\yopd{G}{\yosmf{k}}$.
Let 
$b$
 be the inverse of 
 $f$
 in 
$\yopd{G}{\yosmf{k}}$.
Since
 $a$
 is equivalent to 
 $f$
 modulo 
$\yopideal{G}{\yosmf{k}}$,
the product
 $ab$
 is equivalent to 
 $1$
 modulo 
$\yopideal{G}{\yosmf{k}}$.
\end{proof}
%proofed

Lemma \ref{lem:dfield} yields the following corollary.

%lembe
\begin{cor}\label{cor:216}
Let 
$G\in \yogset$,
$p$
be a prime, 
and 
$\yosmf{k}$
be a perfect field of 
characteristic 
$p>0$.
Then the following statements are true: 
\begin{enumerate}[label=\textup{(\arabic*)}]

\item\label{item:ffff}
For every complete system 
$\yorep\yosub \yowitto{\yosmf{k}}$
of representatives of 
$\yosmf{k}$
in
$\yowitto{\yosmf{k}}$,
which we regard as a complete system of representatives
of 
$\yosmf{k}$
in
$\yopf{G}{\yosmf{k}}{p}$
via the canonical embedding, 
the set 
$\yoplc{G}{\yosmf{k}}{p}$
is equal to 
the set 
\[
\yoproj(\yostan{G}{\yosmf{k}}{\yorep}(\yopd{G}{\yosmf{k}})). 
\]
Moreover, 
for every
$a\in \yoplc{G}{\yosmf{k}}{p}$,
there exists a unique 
$f\in \yostan{G}{\yosmf{k}}{\yorep}(\yopd{G}{\yosmf{k}})$
such that
$a=\yoproj(f)$.
\item\label{item:gggg}
The set 
$\yoplc{G}{\yosmf{k}}{p}$
is a subfield of 
$\yopf{G}{\yosmf{k}}{p}$.
\end{enumerate}

\end{cor}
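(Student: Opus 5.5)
For \ref{item:ffff}, the plan is to show two inclusions and then uniqueness, using Lemma \ref{lem:repbeta} and Lemma \ref{lem:dfield}\ref{item:ddd1}.

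Let $a\in\yoplc{G}{\yosmf{k}}{p}$. Then $a=\yoproj(z)$ for some $z\in\yopd{G}{\yosmf{k}}$. By Definition \ref{df:bbbb}, $\yostan{G}{\yosmf{k}}{\yorep}(z)$ is equivalent to $z$ modulo $\yopideal{G}{\yosmf{k}}$, so $a=\yoproj(\yostan{G}{\yosmf{k}}{\yorep}(z))$. This gives $\yoplc{G}{\yosmf{k}}{p}\yosub\yoproj(\yostan{G}{\yosmf{k}}{\yorep}(\yopd{G}{\yosmf{k}}))$.

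For the reverse inclusion, take $z\in\yopd{G}{\yosmf{k}}$. By Lemma \ref{lem:dfield}\ref{item:ddd1}, $f=\yostan{G}{\yosmf{k}}{\yorep}(z)$ satisfies \ref{item:19finc}. Its coefficients lie in $\yorep\yosub\yowitto{\yosmf{k}}$, so $f\in\yopd{G}{\yosmf{k}}$ and hence $\yoproj(f)\in\yoplc{G}{\yosmf{k}}{p}$.

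For uniqueness, suppose $f_1,f_2$ are standard representations of elements of $\yopd{G}{\yosmf{k}}$ with $\yoproj(f_1)=\yoproj(f_2)=a$. Both have all coefficients in $\yorep$. Both are equivalent to $f_1$ modulo $\yopideal{G}{\yosmf{k}}$. The uniqueness part of Lemma \ref{lem:repbeta}, applied to $\alpha=f_1$, then gives $f_1=f_2$. The only subtle point is that this uniqueness is uniqueness of coefficients in $\yorep$ within a class mod $\yopideal{G}{\yosmf{k}}$, which is exactly what Lemma \ref{lem:repbeta} asserts.

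For \ref{item:gggg}, we use that $\yoproj$ is a ring homomorphism and that $\yopd{G}{\yosmf{k}}$ is a subring of $\yoha{G}{\yowitto{\yosmf{k}}}$ by Lemma \ref{lem:dfield}\ref{item:ddd2}. Hence the image $\yoplc{G}{\yosmf{k}}{p}$ is a subring of the field $\yopf{G}{\yosmf{k}}{p}$ containing $1=\yoproj(1)$.

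It remains to check closure under inverses. Let $x\in\yoplc{G}{\yosmf{k}}{p}$ be nonzero, and write $x=\yoproj(a)$ with $a\in\yopd{G}{\yosmf{k}}$ and $\yoproj(a)\neq0$. Lemma \ref{lem:dfield}\ref{item:ddd4} supplies $b\in\yopd{G}{\yosmf{k}}$ with $ab\equiv1$ modulo $\yopideal{G}{\yosmf{k}}$. Then $\yoproj(b)=x^{-1}$ lies in $\yoplc{G}{\yosmf{k}}{p}$, so $\yoplc{G}{\yosmf{k}}{p}$ is a subfield.

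No step is a real obstacle, since everything is packaged in Lemma \ref{lem:dfield}. The point needing the most care is the reverse inclusion in \ref{item:ffff}: one must note explicitly that $\yorep\yosub\yowitto{\yosmf{k}}$, so the standard representation is an element of $\yoha{G}{\yowitto{\yosmf{k}}}$ and not merely of a Hahn ring over $\yowitt{\yosmf{k}}$.
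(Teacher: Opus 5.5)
Your proposal is correct and follows essentially the same route as the paper. Existence comes from the standard representation via Lemma \ref{lem:dfield}, uniqueness from Lemma \ref{lem:repbeta}, and closure under inverses from Lemma \ref{lem:dfield}\ref{item:ddd4} together with the subring property. You are in fact more explicit than the paper about the reverse inclusion in \ref{item:ffff}, though that inclusion is even more immediate than you suggest, since $\yoproj(\yostan{G}{\yosmf{k}}{\yorep}(z))=\yoproj(z)$ for every $z\in\yopd{G}{\yosmf{k}}$.
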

%lemed
%proofbegin
\begin{proof}
We first prove 
\ref{item:ffff}. 
Let 
$a\in \yoplc{G}{\yosmf{k}}{p}$.
Then there exists 
$b\in \yopd{G}{\yosmf{k}}$
such that 
$a=\yoproj(b)$.
By Lemma \ref{lem:dfield}, 
the element 
$\yostan{G}{\yosmf{k}}{\yorep}(b)$
belongs to 
$\yostan{G}{\yosmf{k}}{\yorep}(\yopd{G}{\yosmf{k}})$,
and 
\[
a=\yoproj(\yostan{G}{\yosmf{k}}{\yorep}(b)).
\]
This proves the stated equality. 
The uniqueness follows from the uniqueness in 
Lemma \ref{lem:repbeta}. 

We next prove 
\ref{item:gggg}. 
By Lemma \ref{lem:dfield}, 
the set 
$\yopd{G}{\yosmf{k}}$
is a subring of 
$\yoha{G}{\yowitto{\yosmf{k}}}$.
Therefore 
$\yoplc{G}{\yosmf{k}}{p}=\yoproj(\yopd{G}{\yosmf{k}})$
is a subring of 
$\yopf{G}{\yosmf{k}}{p}$.
Let 
$a\in \yoplc{G}{\yosmf{k}}{p}$
be non-zero. 
Take 
$b\in \yopd{G}{\yosmf{k}}$
such that 
$a=\yoproj(b)$.
Then 
$\yoproj(b)\neq 0$.
By Lemma \ref{lem:dfield}, 
there exists 
$c\in \yopd{G}{\yosmf{k}}$
such that 
$bc$
is equivalent to 
$1$
modulo 
$\yopideal{G}{\yosmf{k}}$.
Thus 
$a^{-1}=\yoproj(c)$
belongs to 
$\yoplc{G}{\yosmf{k}}{p}$.
This proves that 
$\yoplc{G}{\yosmf{k}}{p}$
is a subfield of 
$\yopf{G}{\yosmf{k}}{p}$.
\end{proof}
%proofend

We call 
$\yoplc{G}{\yosmf{k}}{p}$
the 
\emph{$p$-adic
Levi--Civita field
associated with 
$G$
and 
$\yosmf{k}$}.

To use
$p$-adic
and 
ordinary 
Levi--Civita
fields in a 
unified way, 
we 
make the following 
definition. 

\begin{df}\label{df:uklc}
Let 
$G\in \yogset$,
$(q, p)\in \yoqpset$,
and 
$\yosmf{k}$
be a perfect field of characteristic 
$p$.
We define 
$\youulc{G}{\yosmf{k}}{q}{p}$
by
\[
\youulc{G}{\yosmf{k}}{q}{p}
=\begin{cases}
\yolc{G}{\yosmf{k}} & \text{if
$q=p$;}\\
\yoplc{G}{\yosmf{k}}{p} & \text{if
$q\neq p$.}
\end{cases}
\]
\end{df}

%state body 
\begin{prop}\label{prop:pppsubsub}
Let 
$G\in \yogset$,
$(q, p)\in \yoqpset$,
and 
$\yosmf{k}$
be a perfect field of characteristic 
$p$.
Then the set 
$\youulc{G}{\yosmf{k}}{q}{p}$
is a 
subfield of 
$\youhaq{G}{\yosmf{k}}{q}{p}$.
\end{prop}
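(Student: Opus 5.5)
The statement splits according to whether $q=p$ or $q\neq p$, following Definitions \ref{df:uk} and \ref{df:uklc}, and in each case it reduces to a result already established.

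\emph{Case $q=p$.} Here $\youulc{G}{\yosmf{k}}{q}{p}=\yolc{G}{\yosmf{k}}$ and $\youhaq{G}{\yosmf{k}}{q}{p}=\yoha{G}{\yosmf{k}}$. Lemma \ref{lem:levicivita} states precisely that $\yolc{G}{\yosmf{k}}$ is a subfield of $\yoha{G}{\yosmf{k}}$, so this case is immediate. If one wants to avoid quoting \cite{MR3782290}, the argument of Lemma \ref{lem:dfield} carries over with $\yowitto{\yosmf{k}}$ replaced by $\yosmf{k}$:
\begin{itemize}
\item closure under sums and products follows from $\supp(a+b)\yosub\supp(a)\cup\supp(b)$ and $\supp(ab)\yosub\supp(a)+\supp(b)$, together with the condition \ref{item:19finc};
\item for inversion, write a nonzero $f$ as $f=z_m t^m(1+y')$ with $z_m\in\yosmf{k}^{\times}$ and $\min\supp(y')>0$;
\item the geometric series for $(1+y')^{-1}$ then has support satisfying \ref{item:19finc}, by the same counting argument as in Lemma \ref{lem:dfield}\ref{item:ddd3}: only finitely many powers of $y'$ contribute below any fixed $n$, and each contributes finitely many exponents there.
\end{itemize}

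\emph{Case $q\neq p$.} Here $q=0$ and $p>0$, so $\youulc{G}{\yosmf{k}}{q}{p}=\yoplc{G}{\yosmf{k}}{p}$ and $\youhaq{G}{\yosmf{k}}{q}{p}=\yopf{G}{\yosmf{k}}{p}$. Corollary \ref{cor:216}\ref{item:gggg} says exactly that $\yoplc{G}{\yosmf{k}}{p}$ is a subfield of $\yopf{G}{\yosmf{k}}{p}$, which finishes the proof.

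The substantive work, namely the invertibility of $1-a$ inside $\yopd{G}{\yosmf{k}}$ and the compatibility of the standard representation with \ref{item:19finc}, was already carried out in Lemma \ref{lem:dfield}. The only point to verify is that the two cases of Definition \ref{df:uklc} match the two cases of Definition \ref{df:uk}, which they do by construction, since both definitions split on the same condition $q=p$ versus $q\neq p$.
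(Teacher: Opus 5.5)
Your proof is correct and matches the paper's: split on $q=p$ versus $q\neq p$, then invoke Lemma \ref{lem:levicivita} and Corollary \ref{cor:216}\ref{item:gggg}, respectively. The optional self-contained argument you sketch for the case $q=p$, which mirrors Lemma \ref{lem:dfield}, is also sound.
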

%propend
%proofbegin
\begin{proof}
The case
$q=p$
follows from 
Lemma 
\ref{lem:levicivita}. 
The case
$q\neq p$
follows from 
Corollary 
\ref{cor:216}. 
\end{proof}
%proofend

We simply represent the restriction
$\youhavq{G}{\yosmf{k}}{q}{p}|_{\youulc{G}{\yosmf{k}}{q}{p}}$
by the same symbol 
$\youhavq{G}{\yosmf{k}}{q}{p}$.
In this setting, 
the field 
$(\youulc{G}{\yosmf{k}}{q}{p}, \youhavq{G}{\yosmf{k}}{q}{p})$
becomes a valued subfield of 
$(\youhaq{G}{\yosmf{k}}{q}{p}, \youhavq{G}{\yosmf{k}}{q}{p})$.

\begin{prop}\label{prop:primevaluedextension}
Let
$G\in\yogset$.
The following statements hold:
\begin{enumerate}[label=\textup{(\arabic*)}]
\item
If
$\yosmf{k}$
is a field of characteristic
$0$,
then
$(\youulc{G}{\yosmf{k}}{0}{0},
\youhavq{G}{\yosmf{k}}{0}{0})$
is a valued field extension of
$\qq$
equipped with the trivial valuation.
\item
If
$p$
is a prime and
$\yosmf{k}$
is a perfect field of characteristic
$p$,
then
$(\youulc{G}{\yosmf{k}}{0}{p},
\youhavq{G}{\yosmf{k}}{0}{p})$
is a valued field extension of
$(\yopnbf{p},\yopnbv{p})$.
\end{enumerate}
\end{prop}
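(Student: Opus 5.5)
For (1), the plan is to identify $\qq$ with the constant series inside $\yolc{G}{\yosmf{k}}$. Since $\yosmf{k}$ has characteristic $0$, its prime field is $\qq$, and $\qq\yosub\yosmf{k}$. The constants $c=c\,t^{0}$ have support $\{0\}$ or $\emptyset$, so they satisfy \ref{item:19finc}. Hence $\yosmf{k}\yosub\yolc{G}{\yosmf{k}}=\youulc{G}{\yosmf{k}}{0}{0}$, and this is a subfield by Proposition \ref{prop:pppsubsub}. For nonzero $c\in\qq$ we have $\supp(c)=\{0\}$, so $\yohav{G}{\yosmf{k}}(c)=0$. This is exactly the trivial valuation on $\qq$, and the field operations agree with those of $\qq$ because constants multiply coefficientwise at degree $0$.

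For (2), I use Proposition \ref{prop:embedcoef} with $\yosmf{k}$ in place of $\yogf{p}$. It gives a valued-field embedding
\[
(\yowitt{\yosmf{k}},\yowiv{\yosmf{k}})\hookrightarrow(\youhaq{G}{\yosmf{k}}{0}{p},\youhavq{G}{\yosmf{k}}{0}{p}),
\]
induced from $\zz\yosub G$ via $\yowitt{\yosmf{k}}=\youhaq{\zz}{\yosmf{k}}{0}{p}$. Two things then remain. First, $(\yopnbf{p},\yopnbv{p})$ must sit as a valued subfield of $(\yowitt{\yosmf{k}},\yowiv{\yosmf{k}})$. Second, the image must lie in the $p$-adic Levi--Civita subfield $\yoplc{G}{\yosmf{k}}{p}$.

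\textbf{Step 1: $\yopnbf{p}$ inside $\yowitt{\yosmf{k}}$.} The field $\yowitt{\yosmf{k}}$ has characteristic $0$, so it contains $\qq$. On $\zz$ the valuation $\yowiv{\yosmf{k}}$ satisfies $\yowiv{\yosmf{k}}(p)=1$. Every integer prime to $p$ has nonzero residue in $\yosmf{k}$ (characteristic $p$), so it is a unit and has valuation $0$. Hence $\yowiv{\yosmf{k}}$ restricts to $\yopnbv{p}$ on $\qq$. Because $\yowitto{\yosmf{k}}$ is complete, the closure of $\zz$ in it is a copy of $\zz_{p}$, and taking fractions gives a valued embedding of $\yopnbf{p}$ into $\yowitt{\yosmf{k}}$. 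This is functorial: the embedding $\yogf{p}\yosub\yosmf{k}$ induces $\yowitto{\yogf{p}}=\zz_{p}\to\yowitto{\yosmf{k}}$.

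\textbf{Step 2: the image lies in $\yoplc{G}{\yosmf{k}}{p}$.} This is the step needing care. Unwinding the embedding of Proposition \ref{prop:embedcoef}, an element of $\yowitt{\yosmf{k}}=\yoha{\zz}{\yowitto{\yosmf{k}}}/\yopideal{\zz}{\yosmf{k}}$ is represented by a series $\sum_{n\in\zz}a_{n}t^{n}$ with well-ordered support in $\zz$. Such a support is bounded below, so it has only finitely many elements in each $(-\infty,N]$. Viewed inside $\yoha{G}{\yowitto{\yosmf{k}}}$ via $\zz\yosub G$, this series therefore satisfies \ref{item:19finc} and lies in $\yopd{G}{\yosmf{k}}$. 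Its image under $\yoproj$ then lies in $\yoplc{G}{\yosmf{k}}{p}$.

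An even simpler route is available, and I would use it to sidestep well-definedness of the unwinding. Every element of $\yopnbf{p}$ is $p^{m}u$ with $u\in\zz_{p}\yosub\yowitto{\yosmf{k}}$. The element $u$ is a constant series in $\yopd{G}{\yosmf{k}}$, and $p=\youit=\yoproj(t)$ with $t^{m}\in\yopd{G}{\yosmf{k}}$. So $p^{m}u=\yoproj(ut^{m})\in\yoplc{G}{\yosmf{k}}{p}$, using that the embedding restricted to constants is the obvious one. Finally, the valuation on $\youulc{G}{\yosmf{k}}{0}{p}$ is the restriction of $\youhavq{G}{\yosmf{k}}{0}{p}$, so the compatibility of valuations follows from Step 1 together with Proposition \ref{prop:embedcoef}.
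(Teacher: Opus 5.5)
Your proposal is correct and follows essentially the same route as the paper. Part (1) uses the constant series. Part (2) composes $\yopnbf{p}\hookrightarrow\yowitt{\yosmf{k}}$ with the embedding induced by $\zz\yosub G$, whose image lies in the Levi--Civita subfield because well-ordered subsets of $\zz$ satisfy the condition (Fin). The differences are minor and both variants are valid: your closure-of-$\zz$ argument (via completeness of $\yowitto{\yosmf{k}}$ and $\yowiv{\yosmf{k}}|_{\zz}=\yopnbv{p}$) replaces the paper's citation of Witt-vector functoriality, and your identity $p^{m}u=\yoproj(ut^{m})$ is an equally valid way to land in $\yoplc{G}{\yosmf{k}}{p}$.
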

\begin{proof}
The first assertion follows by regarding the prime subfield
$\qq\subset\yosmf{k}$
as the constant-coefficient subfield of
$\yolc{G}{\yosmf{k}}$.

For the second assertion, by the functoriality of Witt vectors
\cite[Proposition 10, p.~39]{MR0554237},
the inclusion
$\yogf{p}\subset\yosmf{k}$
induces a valued field embedding
\[
(\yopnbf{p},\yopnbv{p})
=(\yowitt{\yogf{p}},\yowiv{\yogf{p}})
\hookrightarrow
(\yowitt{\yosmf{k}},\yowiv{\yosmf{k}}).
\]
As in Proposition \ref{prop:embedcoef}, we identify
$\yowitt{\yosmf{k}}$
with
$\youhaq{\zz}{\yosmf{k}}{0}{p}$.
Every well-ordered subset of
$\zz$
satisfies
\ref{item:19finc}, and hence
\[
\youhaq{\zz}{\yosmf{k}}{0}{p}
=\youulc{\zz}{\yosmf{k}}{0}{p}.
\]
The inclusion
$\zz\subset G$
therefore induces a valued field embedding
\[
\yowitt{\yosmf{k}}
\hookrightarrow
\youulc{G}{\yosmf{k}}{0}{p}.
\]
The composite of these embeddings proves the second assertion.
\end{proof}

%%%%%%%%%%%%%%%%%%%%
%%%%%%%%%%%%%%%%%%%%%%%%%%%%%

\section{Algebraic structures on 
non-Archimedean Urysohn universal 
metric spaces}\label{sec:Urysohn}
In this section, 
we show 
that 
the Urysohn universal ultrametric spaces associated with range sets
of the form
$\{0\}\cup\{\,\yoratio^{-g}\mid g\in G\,\}$,
where
$\yoratio>1$
and
$G\in\yogset$,
admit
a valued field 
extending 
a given prime valued field.
Such an algebraic structure
is realized by a
$p$-adic
or 
ordinary 
Levi--Civita 
field. 
Some parts of this section overlap with 
the material on petaloid spaces in 
\cite{Ishiki2023Ury}. 
We recall them here in order to make the present paper 
self-contained 
and to fix the notation used below. 

\subsection{Urysohn universal ultrametric spaces and petaloid structure}

For a class
$\yochara{C}$
of metric spaces, 
a metric space
$(X, d)$
is said to be 
\emph{$\yochara{C}$-injective} if 
for every pair
$(A, d_{A})$
and
$(B, d_{B})$
in
$\yochara{C}$
and 
for every pair of isometric embeddings 
$\phi\colon (A, d_{A})\to (B, d_{B})$
and 
$\psi\colon (A, d_{A})\to (X, d)$,
there exists an isometric embedding 
$\theta\colon (B, d_{B})\to (X, d)$
such that 
$\theta\circ \phi=\psi$.
We denote by 
$\yofin$
(resp.~$\youfin(R)$
for a range set
$R$)
the class of 
all finite metric spaces
(resp.~all finite 
$R$-valued
ultrametric spaces). 
There exists a 
separable complete 
$\yofin$-injective
metric space
$(\yourysp, \yourydis)$,
unique up to isometry, 
and this space is called 
the 
\emph{Urysohn universal metric space}
(see \cite{Ury1927} and \cite{MR2435148}).

Similarly, 
if 
$R$
is countable, 
there exists 
a separable complete 
$R$-valued
$\youfin(R)$-injective
ultrametric space, 
unique up to isometry, 
and it is called the 
\emph{$R$-Urysohn
universal ultrametric space}, 
which is a non-Archimedean 
analogue of
$(\yourysp, \yourydis)$
(see
\cite{MR2754373} and \cite{MR2667917}). 
Note that if
$R$
is uncountable, 
then every injective
$R$-valued
ultrametric space 
is non-separable
(see \cite{MR2754373}). 
Thus separability naturally leads one 
to the countable case. 

In 
\cite{Ishiki2023Ury}, 
the author 
introduced 
\emph{petaloid spaces}, 
which play the role of 
$R$-Urysohn
universal ultrametric spaces
when
$R$
is uncountable. 

We begin by fixing notation. 
A subset
$E$
of
$[0, \infty)$
is said to be
 \emph{semi-sporadic} if 
there exists a strictly decreasing sequence 
$\{a_{i}\}_{i\in \zz_{\ge 0}}$
in
$(0, \infty)$
such that 
$\lim_{i\to \infty}a_{i}=0$
and 
$E=\{0\}\cup \{\, a_{i}\mid i\in \zz_{\ge 0}\, \}$.
A subset of
$[0, \infty)$
is 
said to be
\emph{tenuous} 
if it is finite or semi-sporadic
(see \cite{Ishiki2023const}). 
For a range set
$R$,
we denote by
$\yoofam{R}$
the set of all tenuous 
range subsets of
$R$.
Let us recall the definition of petaloid spaces
from 
\cite{Ishiki2023Ury}. 
%dfbegin
\begin{df}\label{df:petal}
Let 
$R$
be a  range set.  
We say that a metric space  
$(X, d)$
is \emph{$R$-petaloid} 
if 
it is an 
$R$-valued
ultrametric space 
and 
there exists a family 
$\{\yopetal{X}{S}\}_{S\in \yoofam{R}}$
of subspaces of 
$X$
satisfying the following properties:
%enubegin
\begin{enumerate}[label=\textup{(P\arabic*)}]

\item\label{item:pr:sep}
For every  
$S\in \yoofam{R}$,
the subspace 
 $(\yopetal{X}{S}, d)$
 is isometric to 
 the 
 $S$-Urysohn
universal ultrametric space.

\item\label{item:pr:cup}
We have 
$\bigcup_{S\in \yoofam{R}}\yopetal{X}{S}=X$.

\item\label{item:pr:cap}
If 
$S,  T\in \yoofam{R}$,
then
$\yopetal{X}{S}\cap \yopetal{X}{T}=\yopetal{X}{S\cap T}$.

\item\label{item:pr:distance}
If
 $S, T\in \yoofam{R}$
 and
  $x\in \yopetal{X}{T}$,
  then
$d(x, \yopetal{X}{S})$
belongs to 
$(T\setminus S)\cup \{0\}$.

\end{enumerate}
%enuend
We call the family 
\[
\{\yopetal{X}{S}\}_{S\in \yoofam{R}}
\]
an 
\emph{$R$-petal
structure on
$X$},
and call
$\yopetal{X}{S}$
the 
$S$-piece
of the 
$R$-petal
structure 
above. 
\end{df}
%dfend
Observe that even when 
$R$
is countable, 
the 
$R$-Urysohn
universal ultrametric space
has an 
$R$-petal
structure satisfying 
the conditions 
\ref{item:pr:sep}--\ref{item:pr:distance}
(see \cite{Ishiki2023Ury}).
This means that 
petaloid spaces are 
natural generalizations of 
Urysohn universal ultrametric spaces. 

The following theorem is taken from 
\cite[Theorem 2.3]{Ishiki2023halo}
(see also 
\cite{Ishiki2023Ury} 
for petaloid spaces and 
\cite[Propositions 20.2 and 21.1]{MR2444734}). 
In the cited papers, 
the existence and uniqueness of petaloid spaces are stated for 
uncountable range sets. 
The same proofs apply without change to countable range sets; 
in that case, the resulting petaloid space is the separable 
$R$-Urysohn
universal ultrametric space. 
\begin{thm}\label{thm:18:petapeta}
Let
$R$
be a range set. 
The following statements hold:
\begin{enumerate}[label=\textup{(\arabic*)}]
\item\label{item:uni}
There exists a unique
$R$-petaloid
ultrametric space
up to isometry. 
\item 
The
$R$-petaloid
ultrametric space is complete. 
Moreover, it is 
$\youfin(R)$-injective.
\item 
the range set
$R$
 is uncountable if and only if 
the
$R$-petaloid
ultrametric space is
non-separable. 
\item 
Every separable
$R$-valued
ultrametric space 
can be isometrically embedded into 
the
$R$-petaloid
ultrametric space. 
\end{enumerate}
\end{thm}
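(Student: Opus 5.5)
We prove Theorem \ref{thm:18:petapeta} by adapting the arguments of \cite{Ishiki2023Ury} and \cite{Ishiki2023halo} so that they do not depend on $R$ being uncountable. The argument has four parts: construct an $R$-petaloid space, prove it is unique, check completeness and injectivity, and settle separability and universality.

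\textbf{Existence.} We build the $R$-petaloid space by gluing Urysohn pieces over the directed system $\yoofam{R}$. For each $S\in\yoofam{R}$ the set $S$ is countable, so the $S$-Urysohn universal ultrametric space exists. We realize these spaces compatibly, so that for $S\subseteq T$ the $S$-piece is a subspace of the $T$-piece, and so that \ref{item:pr:cap} and \ref{item:pr:distance} hold. A concrete model is the space of finitely supported functions (or functions with tenuous-type support) from $R\setminus\{0\}$ into a countable pointed set, with the ultrametric given by the largest index where two functions differ. The $S$-piece is then the completion of the functions supported in $S$. If this model works in the uncountable case, nothing in it uses uncountability, so it works verbatim here.

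\textbf{Uniqueness.} Given two $R$-petaloid spaces $X$ and $Y$, we build an isometry piece by piece. For each $S\in\yoofam{R}$ we need an isometry $\yopetal{X}{S}\to\yopetal{Y}{S}$, and these must be compatible on intersections. The plan is a transfinite back-and-forth argument along an enumeration of $\yoofam{R}$. When we extend from $\bigcup_{S'\subsetneq S}$ to the piece $S$, we use the injectivity of the $S$-Urysohn space. Condition \ref{item:pr:distance} ensures that the distances from new points to the pieces already handled take values in $S$, so the one-point extensions stay inside the correct pieces. The main obstacle is this compatibility step, and in the original papers it is exactly where the proof is carried out. For countable $R$ there is a shortcut: $R$ itself may fail to be tenuous, but every finite subset of $R$ is tenuous. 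In that case we would instead show directly that $X$ is separable, complete and $\youfin(R)$-injective, and invoke the known uniqueness of the $R$-Urysohn space \cite{MR2754373}.

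\textbf{Completeness and injectivity.} For completeness, take a Cauchy sequence. Its distance values form a tenuous subset of $R$, and by \ref{item:pr:distance} the sequence can be moved into a single piece $\yopetal{X}{S}$, which is complete. For $\youfin(R)$-injectivity, let $A\subseteq B$ be finite $R$-valued ultrametric spaces. The distance set of $B$ is a finite set $T$, and the image of $A$ lies in some piece $S$. Enlarging to the piece $S\cup T$ and using its $(S\cup T)$-Urysohn injectivity, we extend the embedding.

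\textbf{Separability and universality.} For (3), suppose $R$ is uncountable. For each $r\in R$ we can pick points at mutual distance $r$, which gives an uncountable $r$-separated family for some scale, so $X$ is non-separable. Conversely, if $R$ is countable, then $\yoofam{R}$ has countably many finite members. We check that every point of $X$ is a limit of points in the pieces with finite $S$, and that each such piece is separable, so $X$ is separable. For (4), let $Z$ be a separable $R$-valued ultrametric space. Its distance set is countable. We enumerate a dense subset of $Z$ and embed it point by point using $\youfin(R)$-injectivity, then extend to all of $Z$ by completeness.
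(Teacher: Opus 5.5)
\textbf{Verdict: there is a genuine gap in your completeness step, and your shortcut for countable $R$ depends on it.}

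Your arguments for $\youfin(R)$-injectivity, for separability when $R$ is countable, and for (4) are fine. For uncountable $R$ you defer uniqueness to Ishiki's earlier papers, exactly as the paper does. The paper's proof of this theorem is only that citation, plus the assertion that the same arguments work verbatim for countable $R$. Your shortcut for countable $R$ is a different and cleaner route than the paper's ``the same proofs apply'': show that any $R$-petaloid space is separable, complete and $\youfin(R)$-injective, then invoke the classical uniqueness of the $R$-Urysohn space.

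That shortcut needs completeness of an \emph{arbitrary} petaloid space, not just of the coordinate model, and your completeness step fails as written. The tenuous set of distance values of a Cauchy sequence does not single out a piece that contains, or even approximates, the sequence. For a counterexample, take a suitable $R$ and work in $(\yomapsco{R}{\zz_{\ge0}},\yomaindisco)$. Choose $a>b>r_{1}>r_{2}>\cdots\to0$ in $R$. Let $x_{0}=0$, and for $n\ge1$ let $x_{n}$ take the value $1$ at $a$, $b$, $r_{n}$ and $0$ elsewhere. The distance values are then $0$, $a$ and the $r_{n}$. Even after adjoining the index set $\{0\}$ of the piece of $x_{0}$, the resulting piece $\yopetal{X}{S}$ has $b\notin S$. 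Hence every $x_{n}$ with $n\ge1$ stays at distance exactly $b$ from that piece. Property (P4) only controls $d(x_{n},\yopetal{X}{S})$ in terms of some piece containing $x_{n}$, and arbitrary choices of such pieces can have a non-tenuous union.

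\textbf{The missing idea is the support of a point.}
\begin{enumerate}
\item For $x\in X$, let $S_{x}$ be the intersection of all $S\in\yoofam{R}$ with $x\in\yopetal{X}{S}$. It is a subset of a tenuous set containing $0$, so it is tenuous.
\item Suppose $d(x,\yopetal{X}{S_{x}})=r>0$. By (P4), $r\notin S_{x}$, so some $S'$ with $x\in\yopetal{X}{S'}$ omits $r$. Applying (P4) with $S'$ then forbids the value $r$. So the distance is $0$. Pieces are complete, hence closed, so $x\in\yopetal{X}{S_{x}}$.
\item Now suppose $d(x,y)\le r$, and put $U=S_{y}\cup(S_{x}\cap[0,r])$. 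Since $y\in\yopetal{X}{S_y}\subseteq\yopetal{X}{U}$ by (P3), the distance $d(x,\yopetal{X}{U})$ is at most $r$. By (P4) it lies in $(r,\infty)\cup\{0\}$. Hence $x\in\yopetal{X}{U}$, so $S_{x}\cap(r,\infty)\subseteq S_{y}$. By symmetry, $S_x$ and $S_y$ agree above $r$.
\item Along a Cauchy sequence, $S_{x_{n}}\cap[\epsilon,\infty)$ is therefore eventually constant for each $\epsilon>0$. So $\bigcup_{n}S_{x_{n}}$ is tenuous, and the whole sequence lies in one complete piece.
\end{enumerate}
With this lemma your countable-$R$ route goes through.

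\textbf{A smaller repair is needed in (3).} Picking, for each $r$, points at mutual distance $r$ does not by itself give a separated family. Instead, choose $\epsilon>0$ with $R\cap[\epsilon,\infty)$ uncountable and fix a point $x_{0}$. By injectivity, pick $x_{r}$ with $d(x_{0},x_{r})=r$. Then the isosceles property gives $d(x_{r},x_{s})=\max\{r,s\}\ge\epsilon$ for $r\neq s$, which is an uncountable $\epsilon$-separated family.
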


Based on 
Theorem 
\ref{thm:18:petapeta}, 
for a range set
$R$,
we denote by 
$(\yowusp{R}, \yowudis{R})$
the 
$R$-Urysohn
universal 
ultrametric space if
$R$
is countable; 
otherwise, 
the
$R$-petaloid
ultrametric space. 
In what follows, 
by abuse of notation, 
we call 
$(\yowusp{R}, \yowudis{R})$
the 
\emph{$R$-Urysohn
universal ultrametric space}
even if
$R$
is uncountable.

\subsection{A coordinate model for petaloid spaces}

We describe 
an example of a petaloid space. 
Let 
$N$
be 
a countably infinite set containing 
a distinguished point 
$0$.
Let 
$R$
be a range set. 
We denote by
 $\yomapsco{R}{N}$
the set of all functions 
$f\colon R\to N$
 such that
  $f(0)=0$
   and 
the set 
$\{0\}\cup \{\, x\in R \mid f(x)\neq 0\, \}$
is tenuous. 
For 
$f, g\in \yomapsco{R}{N}$,
we define an 
$R$-ultrametric
$\yomaindisco$
 on 
$\yomapsco{R}{N}$
by 
\[
\yomaindisco(f, g)=
\max\{\, r\in R\mid f(r)\neq g(r)\, \}
\]
 if
  $f\neq g$;
otherwise, 
$\yomaindisco(f, g)=0$.
This maximum exists because the set 
\[
\{0\}\cup \{\, r\in R\mid f(r)\neq g(r)\,\}
\]
is tenuous. 
For more information on this
construction, we refer the reader to 
\cite{MR2754373} and 
\cite{MR2667917}. 
%%lembe
\begin{lem}\label{lem:Gisom}
For every 
 countably infinite set 
$N$
containing 
$0$
 and 
every range set 
$R$,
the ultrametric space 
$(\yomapsco{R}{N}, \yomaindisco)$
 is 
isometric to 
$(\yowusp{R}, \yowudis{R})$.
\end{lem}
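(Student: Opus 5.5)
By Theorem~\ref{thm:18:petapeta}\ref{item:uni}, the $R$-petaloid space is unique up to isometry. It therefore suffices to exhibit an $R$-petal structure on $(\yomapsco{R}{N}, \yomaindisco)$. First I check that $\yomaindisco$ is an $R$-valued ultrametric. The maximum defining $\yomaindisco(f,g)$ exists because the union of two tenuous sets is tenuous; it lies in $R$; and it vanishes only when $f=g$. For the strong triangle inequality, observe that if $f(r)\neq h(r)$ then $f(r)\neq g(r)$ or $g(r)\neq h(r)$, which gives $\yomaindisco(f,h)\le \yomaindisco(f,g)\lor\yomaindisco(g,h)$.

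For $S\in\yoofam{R}$ I define
\[
\yopetal{\yomapsco{R}{N}}{S}=\{\, f\in\yomapsco{R}{N}\mid f(r)=0 \text{ for all } r\in R\setminus S\,\}.
\]
Condition \ref{item:pr:cup} holds because every $f$ lies in the piece indexed by its own tenuous set $\{0\}\cup\{\,r\mid f(r)\neq 0\,\}$. Condition \ref{item:pr:cap} holds because $f$ vanishes outside $S$ and outside $T$ exactly when it vanishes outside $S\cap T$. For \ref{item:pr:distance}, take $x$ in the $T$-piece and let $x'$ agree with $x$ on $S$ and be $0$ off $S$. Then $x'$ lies in the $S$-piece and is at distance $\max\{\,r\in T\setminus S\mid x(r)\neq 0\,\}$ or $0$ from $x$. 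Every element of the $S$-piece must disagree with $x$ at each $r\in T\setminus S$ where $x(r)\neq 0$, so that value is exactly the distance, and it lies in $(T\setminus S)\cup\{0\}$.

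The main step is \ref{item:pr:sep}. The $S$-piece is canonically $\yomapsco{S}{N}$, so I must show that $\yomapsco{S}{N}$ is the $S$-Urysohn space when $S$ is tenuous, hence countable. This requires separability, completeness and $\youfin(S)$-injectivity.

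Separability is immediate. If $S$ is finite the space is countable. If $S$ is semi-sporadic, the finitely supported functions are countable and dense, since truncating $f$ below a small $a_i$ changes it only at distances $<a_i$.

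For completeness, a Cauchy sequence eventually stabilizes at each $a_i$, because values at $a_i$ are fixed once distances drop below $a_i$. The pointwise limit is then defined, it belongs to $\yomapsco{S}{N}$ because its support lies in $S$, and the sequence converges to it.

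For injectivity, take a finite $S$-valued ultrametric space $B=A\cup\{b\}$ with $A$ embedded. It suffices to find $f$ with $\yomaindisco(f,\psi(a))=d(b,a)$ for all $a\in A$. Let $r=\min_a d(b,a)$ and pick $a_0$ attaining it. If $r=0$ there is nothing to do. Otherwise let $f$ agree with $\psi(a_0)$ at all $s>r$ and at $s<r$ set $f(s)=0$. At $s=r$, choose $f(r)\in N$ different from $\psi(a)(r)$ for every $a\in A$; this is possible because $N$ is infinite and $A$ finite. Then $f$ is supported in the tenuous set $S$ and $\yomaindisco(f,\psi(a_0))=r$. For any other $a$, the ultrametric gives $d(b,a)=\max(r,d(a_0,a))$ unless $d(a_0,a)\le r$, and in either case the difference pattern yields the correct distance. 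In the case $d(a_0,a)\le r$, $f$ and $\psi(a)$ agree above $r$ and differ at $r$, so their distance is $r=d(b,a)$. In the case $d(a_0,a)>r$, the largest disagreement is at $d(a_0,a)$. This one-point extension is the expected obstacle, and the choice of a fresh value at level $r$ is what makes it work. Uniqueness of the $S$-Urysohn space then gives \ref{item:pr:sep}.
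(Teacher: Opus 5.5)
Your proof is correct and more self-contained than the paper's. The paper gives no argument of its own here. It invokes \cite[Theorem 1.3]{Ishiki2023Ury}, which treats $N=\zz_{\ge0}$, and remarks that the proof there goes through for any countably infinite $N$.

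You instead carry out the argument directly:
\begin{enumerate}
\item you put the support-defined petal structure on $\yomapsco{R}{N}$ and check (P2)--(P4) by hand;
\item you identify each $S$-piece with $\yomapsco{S}{N}$ and recognize it as the $S$-Urysohn space via separability, completeness and the one-point extension with a fresh value at level $r$;
\item you conclude by the uniqueness in Theorem~\ref{thm:18:petapeta}.
\end{enumerate}
This gives a proof resting only on the uniqueness of petaloid spaces and of countable-range Urysohn spaces, and it shows exactly where the infiniteness of $N$ is used.

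On the other hand, the passage from $\zz_{\ge0}$ to a general $N$ needs no re-examination of any proof. A bijection $\beta\colon N\to\zz_{\ge0}$ with $\beta(0)=0$ induces $f\mapsto\beta\circ f$. This map sends $\yomapsco{R}{N}$ bijectively onto $\yomapsco{R}{\zz_{\ge0}}$ and preserves both supports and the sets where two functions differ, hence preserves $\yomaindisco$.

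Two small points should be made explicit in your version. First, when $R$ is countable, $(\yowusp{R},\yowudis{R})$ is by definition the $R$-Urysohn space. You should note that the unique $R$-petaloid space is separable, complete and $\youfin(R)$-injective by Theorem~\ref{thm:18:petapeta}, and therefore is that space. Second, the one-point extension step needs a trivial separate base case $A=\emptyset$, where $\min_{a}d(b,a)$ is undefined.
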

%%lemed
%proofbe
\begin{proof}
The lemma follows from 
\cite[Theorem 1.3]{Ishiki2023Ury}. 
Although 
in \cite{Ishiki2023Ury}, 
it is only shown that 
$(\yomapsco{R}{\zz_{\ge0}}, \yomaindisco)$
is 
isometric to 
$(\yowusp{R}, \yowudis{R})$,
the proof in 
\cite{Ishiki2023Ury}
 is still valid 
for an arbitrary countably infinite set 
$N$.
\end{proof}
%%proofed

%lembe 
\begin{lem}\label{lem:antitone}
Let 
$A$
be a 
subset of 
$\rr$,
and 
$\yoratio\in (1, \infty)$.
Put 
$S=\{0\}\sqcup \{\, \yoratio^{-g}\mid g\in A\, \}$.
Then the following statements are equivalent: 
\begin{enumerate}[label=\textup{(\arabic*)}]
\item 
for every 
$n\in \zz_{\ge 0}$,
the set 
$A\cap (-\infty, n]$
is finite; 
\item 
the set 
$S$
is 
tenuous. 
\end{enumerate}
\end{lem}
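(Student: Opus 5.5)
The proof is a direct translation through the map $g\mapsto \yoratio^{-g}$, which is a strictly decreasing bijection from $\rr$ onto $(0,\infty)$. It sends $[n,\infty)$ into $(0,\yoratio^{-n}]$, and the open interval $(-\infty,n)$ onto $(\yoratio^{-n},\infty)$. Thus ``small'' elements of $A$ correspond to ``large'' distances in $S$, and the whole statement reduces to this order reversal.

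For (1)$\Rightarrow$(2), assume each $A\cap(-\infty,n]$ is finite. Note that (1) is stated only for $n\ge 0$, but it implies finiteness of $A\cap(-\infty,n]$ for every integer $n$, since these sets are monotone in $n$. Then $A$ is either finite or countably infinite. If $A$ is finite, $S$ is finite and hence tenuous.

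If $A$ is infinite, I enumerate it increasingly. Every initial segment $A\cap(-\infty,a]$ is finite, so $A$ has order type $\omega$: write $A=\{g_{0}<g_{1}<\cdots\}$. Each $(-\infty,n]$ contains only finitely many $g_i$, so $g_i\to\infty$. Setting $a_i=\yoratio^{-g_i}$ gives a strictly decreasing sequence in $(0,\infty)$ with $a_i\to 0$, and $S=\{0\}\cup\{a_i\}$. Hence $S$ is semi-sporadic.

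For (2)$\Rightarrow$(1), suppose $S$ is tenuous. If $S$ is finite, then $A$ is finite by injectivity and (1) is immediate. Otherwise $S=\{0\}\cup\{a_i\}$ with $a_i$ strictly decreasing to $0$. Fix $n\ge 0$. An element $g\in A$ with $g\le n$ gives $\yoratio^{-g}\ge \yoratio^{-n}>0$. Since $a_i\to0$, only finitely many $a_i$ are $\ge \yoratio^{-n}$, so $A\cap(-\infty,n]$ is finite.

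I do not expect a real obstacle. The only point needing care is the enumeration step in (1)$\Rightarrow$(2): I must check that finiteness of the initial segments forces the increasing enumeration to exhaust $A$ and to be unbounded. Unboundedness holds because an infinite $A$ contained in some $(-\infty,n]$ would contradict (1).
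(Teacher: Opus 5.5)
Your proof is correct and takes essentially the same route as the paper. The paper's proof is a one-line appeal to the order reversal of $x\mapsto\yoratio^{-x}$ together with a characterization of tenuous sets cited from \cite[Lemma 2.12]{Ishiki2023const}; you instead carry the argument out directly from the definition (finite or semi-sporadic) via the increasing enumeration of $A$, which makes it self-contained, including the small points about $n<0$ and the order type $\omega$ of $A$.
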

%lemed
%proofbe
\begin{proof}
The lemma 
follows from 
\cite[Lemma 2.12]{Ishiki2023const}
and the fact that 
a map 
$f\colon \rr\to \rr$
defined by 
 $f(x)=\yoratio^{-x}$
 reverses the 
 order on 
 $\rr$.
\end{proof}
%proofed

\subsection{Levi--Civita realizations}

We use the complete system of representatives 
$\yorep$
fixed in Definition 
\ref{df:tau}. 
For 
$x\in \youulc{G}{\yosmf{k}}{q}{p}$,
we write 
$\yostan{G}{\yosmf{k}}{\yorep}(x)$
for its standard representation with coefficients in 
$\yorep$.
When 
$q=p$,
we put 
$\yostan{G}{\yosmf{k}}{\yorep}(x)=x$.
When 
$q\neq p$,
this denotes the unique member of 
$\yostan{G}{\yosmf{k}}{\yorep}(\yopd{G}{\yosmf{k}})$
whose image under 
$\yoproj$
is 
$x$;
its existence and uniqueness follow from Corollary 
\ref{cor:216}.

%lembe
\begin{lem}\label{lem:cond1isom}
Let 
$\yoratio\in (1, \infty)$,
$G\in \yogset$,
$(q, p)\in \yoqpset$,
$\yosmf{k}$
be a perfect field of cardinality 
$\card(\yosmf{k})=\aleph_{0}$
and 
characteristic 
$p$.
Put 
\[
R=\{0\}\cup \{\, \yoratio^{-g}\mid g\in G\, \}.
\] 
Use the complete system of representatives 
$\yorep$
fixed in Definition 
\ref{df:tau}. 
Take 
$S\in \yoofam{R}$.
Define 
a
subset 
$\yopetal{\youulc{G}{\yosmf{k}}{q}{p}}{S}$
of 
$\youulc{G}{\yosmf{k}}{q}{p}$
by 
the set of all 
$f\in \youulc{G}{\yosmf{k}}{q}{p}$
such that 
\[
\{\, \yoratio^{-g}\mid g\in \supp(\yostan{G}{\yosmf{k}}{\yorep}(f))\, \}\yosub S,
\] 
and 
define 
an ultrametric 
$d$
on
$\youulc{G}{\yosmf{k}}{q}{p}$
 by 
\[
d(x, y)=  \yonabs{\youhavq{G}{\yosmf{k}}{q}{p}}{\yoratio}{x-y}. 
\]
Then 
$(\yopetal{\youulc{G}{\yosmf{k}}{q}{p}}{S}, d)$
is isometric to 
$(\yowusp{S}, \yowudis{S})$.
\end{lem}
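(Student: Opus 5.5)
We compare the piece $\yopetal{\youulc{G}{\yosmf{k}}{q}{p}}{S}$ with the coordinate model of Lemma \ref{lem:Gisom}. Put $N=\yorep$ with distinguished point $0\in\yorep$; since $\yorcff|_{\yorep}$ is a bijection onto $\yosmf{k}$ and $\card(\yosmf{k})=\aleph_{0}$, the set $N$ is countably infinite. By Lemma \ref{lem:Gisom}, $(\yomapsco{S}{N},\yomaindisco)$ is isometric to $(\yowusp{S},\yowudis{S})$. So it suffices to give an isometric bijection
\[
\Phi\colon \yopetal{\youulc{G}{\yosmf{k}}{q}{p}}{S}\to\yomapsco{S}{N},\qquad \Phi(x)(\yoratio^{-g})=\yocoef{x}{g}\quad (g\in G,\ \yoratio^{-g}\in S),
\]
with $\Phi(x)(0)=0$.

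\emph{Well-definedness and bijectivity.} The support of $\Phi(x)$ is $\{\yoratio^{-g}\mid g\in\supp(\yostan{G}{\yosmf{k}}{\yorep}(x))\}\yosub S$, and $S$ is tenuous, so every subset of $S$ containing $0$ is tenuous. Hence $\Phi(x)\in\yomapsco{S}{N}$. Injectivity follows from the uniqueness of standard representations (Corollary \ref{cor:216} when $q\neq p$; trivial when $q=p$).

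For surjectivity, take $f\in\yomapsco{S}{N}$ and set $A=\{g\in G\mid f(\yoratio^{-g})\neq0\}$. By Lemma \ref{lem:antitone}, $A\cap(-\infty,n]$ is finite for every $n$, so $A$ is well-ordered and satisfies \ref{item:19finc}. The series $\sum_{g}f(\yoratio^{-g})t^{g}$ therefore lies in $\yolc{G}{\yosmf{k}}$ if $q=p$, and in $\yopd{G}{\yosmf{k}}$ if $q\neq p$. In the latter case its coefficients are already in $\yorep$, so it is its own standard representation by the uniqueness in Lemma \ref{lem:repbeta}. Its image $x$ (under $\yoproj$ if $q\neq p$) lies in the piece and satisfies $\Phi(x)=f$.

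\emph{Isometry.} This is the main obstacle. We must show, for distinct $x,y$ in the piece,
\[
\youhavq{G}{\yosmf{k}}{q}{p}(x-y)=\min\{g\mid \yocoef{x}{g}\neq\yocoef{y}{g}\}.
\]
Given this, $d(x,y)=\yoratio^{-m}=\yomaindisco(\Phi(x),\Phi(y))$, where $m$ is that minimum.

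When $q=p$ the identity is immediate, since $\yohav{G}{\yosmf{k}}$ is $\min\supp$ and subtraction is coefficientwise.

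When $q\neq p$, subtraction produces carries. Write $x=\yoproj(\alpha)$ and $y=\yoproj(\beta)$ with standard $\alpha,\beta$, and let $m$ be the first index where they differ. Then $\alpha-\beta$ has coefficients $0$ below $m$ and coefficient $\alpha_{m}-\beta_{m}$ at $m$, which is a unit of $\yowitto{\yosmf{k}}$. Indeed, distinct elements of $\yorep$ have distinct residues, so their difference has valuation $0$. Write $\alpha-\beta=t^{m}u$ with $u\in\yoha{G}{\yowitto{\yosmf{k}}}$ and $u_{0}$ a unit. Then
\[
\yopfv{G}{\yosmf{k}}{p}(x-y)=m+\yopfv{G}{\yosmf{k}}{p}(\yoproj(u)),
\]
using multiplicativity from Proposition \ref{prop:indS} and $\yopfv{G}{\yosmf{k}}{p}(\youit^{m})=m$.

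It remains to show $\yopfv{G}{\yosmf{k}}{p}(\yoproj(u))=0$. Since $\supp(u)\yosub[0,\infty)$, Lemma \ref{lem:repbeta} gives $\supp(\yostan{G}{\yosmf{k}}{\yorep}(u))\yosub\supp(u)+\zz_{\ge0}\yosub[0,\infty)$. Its coefficient at $0$ is computed by the carry relation $\sum_{n}u_{n}p^{n}$ in the class of $0$. Only $n\ge 0$ contribute, so this class is congruent to $u_{0}$ modulo $p$, which has nonzero residue. Hence the standard coefficient at $0$ is nonzero, and the valuation is $0$.

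This carry computation is the step I would check most carefully against Poonen's definition of $\yopideal{G}{\yosmf{k}}$. An alternative is to cite that the residue map gives $\yorcff(\yoproj(u))=\yorcff(u_{0})\neq0$, which follows from Proposition \ref{prop:hahnspherical}(2) and its construction.
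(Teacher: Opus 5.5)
Your proposal is correct and follows the paper's proof essentially step for step: the same coordinate map into $\yomapsco{S}{\yorep}$, injectivity and surjectivity via uniqueness of standard representations and Lemma \ref{lem:antitone}, and the isometry via the first exponent $m$ at which the standard coefficients differ. Your carry congruence for $q\neq p$ spells out what the paper states only tersely. It could also be applied directly to $\alpha-\beta$ at the exponent class of $m$, which would spare you the separate fact $\yopfv{G}{\yosmf{k}}{p}(\youit^{m})=m$; that fact is not immediate when $1\notin\yorep$.
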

%lemed
%proofbe
\begin{proof}
Since 
$\card(\yosmf{k})=\aleph_{0}$,
we also have 
$\card(\yorep)=\aleph_{0}$.
For every
$x\in\youulc{G}{\yosmf{k}}{q}{p}$,
there is a unique family
$\{s_{x}(g)\}_{g\in G}$
in
$\yorep$
such that
\[
\yostan{G}{\yosmf{k}}{\yorep}(x)
=\sum_{g\in G}s_{x}(g)t^{g}.
\]
Indeed, this is the usual coefficient expansion when
$q=p$,
and it follows
from Corollary \ref{cor:216} when
$q\neq p$.
Moreover, the set
\[
A_{x}=\{\,g\in G\mid s_{x}(g)\neq0\,\}
\]
satisfies the condition \ref{item:19finc}.

Take
$x\in\yopetal{\youulc{G}{\yosmf{k}}{q}{p}}{S}$.
We define a map
$T(x)\colon S\to \yorep$
by 
$T(x)(0)=0$
and, for
$r\neq 0$,
$T(x)(r)=s_{x}(-\log_{\yoratio}(r))$.
This is well-defined because
$S\setminus\{0\}\subset\{\,\yoratio^{-g}\mid g\in G\,\}$.
Since
$x$
belongs to the
$S$-piece,
we have
\[
\{0\}\cup\{\,r\in S\mid T(x)(r)\neq0\,\}
=\{0\}\cup\{\,\yoratio^{-g}\mid g\in A_{x}\,\}
\subset S.
\]
The set on the right-hand side is tenuous by
Lemma \ref{lem:antitone}.
Thus
$T(x)\in \yomapsco{S}{\yorep}$
for all 
$x\in \yopetal{\youulc{G}{\yosmf{k}}{q}{p}}{S}$.
We therefore obtain a map
\[
T\colon \yopetal{\youulc{G}{\yosmf{k}}{q}{p}}{S}
\to \yomapsco{S}{\yorep}
\]
defined by
$x\mapsto T(x)$.

We first prove that
$T$
is injective.
Assume that
$T(x)=T(y)$.
Then 
for every
$g\in G$
such that
$\yoratio^{-g}\in S$,
we have
$s_{x}(g)=s_{y}(g)$. 
Thus
$s_{x}(g)=s_{y}(g)$
for every
$g\in G$.
The uniqueness of the standard representation yields
$x=y$.

To prove surjectivity, take
$h\in\yomapsco{S}{\yorep}$
and define
$c(g)\in\yorep$
for
$g\in G$
by
\[
c(g)=
\begin{cases}
h(\yoratio^{-g}) & \text{if
$\yoratio^{-g}\in S$;}\\
0 & \text{otherwise.}
\end{cases}
\]
Put
\[
A=\{\,g\in G\mid c(g)\neq0\,\}.
\]
Since
$h(0)=0$,
we have
\[
\{0\}\cup\{\,\yoratio^{-g}\mid g\in A\,\}
=\{0\}\cup\{\,r\in S\mid h(r)\neq0\,\}.
\]
The latter set is tenuous by the definition of
$\yomapsco{S}{\yorep}$.
Hence Lemma \ref{lem:antitone} shows that
$A$
satisfies
\ref{item:19finc}.
If
$q=p$,
the series
\[
x=\sum_{g\in G}c(g)t^{g}
\]
therefore belongs to
$\yolc{G}{\yosmf{k}}$.
If
$q\neq p$,
the same series belongs to
$\yostan{G}{\yosmf{k}}{\yorep}(\yopd{G}{\yosmf{k}})$,
and Corollary \ref{cor:216} gives an element
$x\in\yoplc{G}{\yosmf{k}}{p}$
having this standard representation.
In either case, the support condition above shows that
$x\in\yopetal{\youulc{G}{\yosmf{k}}{q}{p}}{S}$,
and the definition of
$c(g)$
gives
$T(x)=h$.
Thus
$T$
is surjective.

It remains to verify that
$T$
preserves distances.
Let
$x,y$
belong to the
$S$-piece.
The assertion is obvious  if
$x=y$. 
We may  assume that
$x\neq y$. 
Put 
\[
m=\min\{\,g\in G\mid s_{x}(g)\neq s_{y}(g)\,\}.
\]
This minimum exists because both coefficient supports satisfy
\ref{item:19finc}.
When
$q=p$,
the definition of the Hahn valuation gives
\[
\youhavq{G}{\yosmf{k}}{q}{p}(x-y)=m.
\]
When
$q\neq p$,
the coefficients
$s_{x}(m)$
and
$s_{y}(m)$
are distinct
representatives of residue classes.
Hence their difference has valuation
$0$
in
$\yowitto{\yosmf{k}}$,
and the coefficients of smaller exponent in
the two standard representations agree.
It follows again that
\[
\youhavq{G}{\yosmf{k}}{q}{p}(x-y)=m.
\]
Consequently, we have 
$d(x,y)=\yoratio^{-m}$.
On the other hand, the points at which 
$T(x)$
and 
$T(y)$
 differ are exactly
$\{\,\yoratio^{-g}\mid s_{x}(g)\neq s_{y}(g)\,\}$.
Since the map
 $g\mapsto\yoratio^{-g}$
 reverses the order, its largest
element is 
$\yoratio^{-m}$.
Therefore we observe that 
\[
\yomaindisco(T(x),T(y))=\yoratio^{-m}=d(x,y).
\]
Thus 
$T$
is an isometric bijection from the 
$S$-piece
onto
$(\yomapsco{S}{\yorep},\yomaindisco)$.
Finally, since
$\card(\yorep)=\aleph_{0}$,
Lemma \ref{lem:Gisom} completes the proof.
\end{proof}
%proofed

%statebody
%thmbegin
\begin{thm}\label{thm:mainury}
Let 
$\yoratio\in (1, \infty)$,
$G\in \yogset$,
$(q, p)\in \yoqpset$,
$\yosmf{k}$
be a perfect field of cardinality 
$\card(\yosmf{k})=\aleph_{0}$
and 
characteristic 
$p$.
Put 
$R=\{0\}\cup \{\, \yoratio^{-g}\mid g\in G\, \}$.
Define 
an ultrametric 
$d$
on
$\youulc{G}{\yosmf{k}}{q}{p}$
 by 
\[
d(x, y)=\yonabs{\youhavq{G}{\yosmf{k}}{q}{p}}{\yoratio}{x-y}. 
\]
Then 
the 
Levi--Civita field
$(\youulc{G}{\yosmf{k}}{q}{p}, d)$
is 
isometric to 
$(\yowusp{R}, \yowudis{R})$.
\end{thm}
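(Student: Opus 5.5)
The plan is to show that the family $\{\yopetal{\youulc{G}{\yosmf{k}}{q}{p}}{S}\}_{S\in\yoofam{R}}$ from Lemma \ref{lem:cond1isom} is an $R$-petal structure on $(\youulc{G}{\yosmf{k}}{q}{p},d)$. Uniqueness in Theorem \ref{thm:18:petapeta} then gives the desired isometry with $(\yowusp{R},\yowudis{R})$. Write $s_x(g)$ for the coefficients of $\yostan{G}{\yosmf{k}}{\yorep}(x)$ and $A_x=\{g\mid s_x(g)\neq0\}$, as in the proof of Lemma \ref{lem:cond1isom}. Put $E_x=\{0\}\cup\{\yoratio^{-g}\mid g\in A_x\}$; then $x$ lies in the $S$-piece exactly when $E_x\yosub S$.

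First, $d$ is $R$-valued, since the valuation takes values in $G\sqcup\{\infty\}$ by Proposition \ref{prop:hahnspherical}, and it is an ultrametric by Proposition \ref{prop:rho}. Condition \ref{item:pr:sep} is exactly Lemma \ref{lem:cond1isom}. For \ref{item:pr:cup}, take any $x$. Its support $A_x$ satisfies \ref{item:19finc} (Corollary \ref{cor:216} when $q\neq p$, and by definition when $q=p$), so Lemma \ref{lem:antitone} shows $E_x$ is tenuous. Hence $E_x\in\yoofam{R}$ and $x$ lies in the $E_x$-piece. Condition \ref{item:pr:cap} is immediate from the definition: $E_x\yosub S$ and $E_x\yosub T$ together are equivalent to $E_x\yosub S\cap T$. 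The set $S\cap T$ is again tenuous, being a subset of a tenuous set that contains $0$.

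The main step is \ref{item:pr:distance}. Fix $x$ in the $T$-piece and $S\in\yoofam{R}$. Define $y$ by truncation: keep the coefficients $s_x(g)$ for $\yoratio^{-g}\in S$ and set all others to $0$. The resulting coefficient family has support contained in $A_x$, so it satisfies \ref{item:19finc}. It therefore defines an element $y$ of the $S$-piece; when $q\neq p$ we use Corollary \ref{cor:216} to realize it from a standard representation, exactly as in the surjectivity part of Lemma \ref{lem:cond1isom}.

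If $y=x$, then $d(x,\text{piece})=0$. Otherwise, let $m$ be the least $g$ with $s_x(g)\neq s_y(g)$. By construction $s_x(m)\neq0$ and $\yoratio^{-m}\notin S$, so $\yoratio^{-m}\in T\setminus S$. The argument in the distance part of Lemma \ref{lem:cond1isom} gives $d(x,y)=\yoratio^{-m}$.

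It remains to check that no point $z$ of the $S$-piece is closer to $x$. Let $m'$ be the least exponent where $s_x$ and $s_z$ differ. All $g<m$ with $s_x(g)\neq0$ have $\yoratio^{-g}\in S$, because $x$ and $y$ agree there. At $g=m$, $z$ must have coefficient $0$ since $\yoratio^{-m}\notin S$, while $s_x(m)\neq0$. Hence $m'\le m$, and so $d(x,z)=\yoratio^{-m'}\ge\yoratio^{-m}$. This shows the infimum is attained and equals $\yoratio^{-m}\in(T\setminus S)\cup\{0\}$. The care needed is in the $p$-adic case, where one must use that distances are read off from the first differing coefficient of standard representations; this is already established in Lemma \ref{lem:cond1isom} and applies verbatim to arbitrary elements of the field.
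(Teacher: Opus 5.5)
Your proposal is correct and follows essentially the same route as the paper: you verify \ref{item:pr:sep}--\ref{item:pr:distance} for the $S$-pieces of Lemma~\ref{lem:cond1isom} and conclude by the uniqueness in Theorem~\ref{thm:18:petapeta}. The only cosmetic difference is in \ref{item:pr:distance}, where your nearest point zeroes out the coefficients of $x$ at exponents $g$ with $\yoratio^{-g}\notin S$, while the paper truncates $x$ to exponents below $m$. Both points agree with $x$ below $m$ and have zero coefficient at $m$, so the same first-differing-coefficient argument gives the distance $\yoratio^{-m}$.
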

%thmend
%proof
\begin{proof}
Use the complete system of representatives 
$\yorep$
fixed in Definition 
\ref{df:tau}. 
We define 
$\yopetal{\youulc{G}{\yosmf{k}}{q}{p}}{S}$
as in 
Lemma 
\ref{lem:cond1isom}. 
Let us 
 prove
that 
$\{\yopetal{\youulc{G}{\yosmf{k}}{q}{p}}{S}\}_{S\in \yoofam{R}}$
satisfies 
the conditions 
\ref{item:pr:sep}--\ref{item:pr:distance}. 

Lemma 
\ref{lem:cond1isom}
implies 
the condition 
\ref{item:pr:sep}.
We next verify \ref{item:pr:cap}.
Let
$S,T\in\yoofam{R}$.
The set
$S\cap T$
is also a tenuous range subset of
$R$.
For
$f\in\youulc{G}{\yosmf{k}}{q}{p}$,
put
$U_{f}=\{\,\yoratio^{-g}\mid
g\in\supp(\yostan{G}{\yosmf{k}}{\yorep}(f))\,\}$.
Then
$U_{f}\subset S\ \text{and}\ U_{f}\subset T
$
if and only if
$
U_{f}\subset S\cap T$.
Therefore
\[
\yopetal{\youulc{G}{\yosmf{k}}{q}{p}}{S}
\cap
\yopetal{\youulc{G}{\yosmf{k}}{q}{p}}{T}
=
\yopetal{\youulc{G}{\yosmf{k}}{q}{p}}{S\cap T},
\]
which proves \ref{item:pr:cap}.

Next we show 
\ref{item:pr:cup}. 
Take 
$f\in \youulc{G}{\yosmf{k}}{q}{p}$
and put
\[
A=\supp(\yostan{G}{\yosmf{k}}{\yorep}(f)).
\]
The set
$A$
satisfies \ref{item:19finc}: this follows from the definition
of the ordinary Levi--Civita field when
$q=p$,
and from
Corollary \ref{cor:216} when
$q\neq p$.
Define
\[
S=\{0\}\cup \{\, \yoratio^{-g}\mid g\in A\, \}.
\]
Since
$A\subset G$,
we have
$S\subset R$,
and
Lemma \ref{lem:antitone} gives
$S\in\yoofam{R}$.
Hence 
$f\in \yopetal{\youulc{G}{\yosmf{k}}{q}{p}}{S}$.
Thus 
\ref{item:pr:cup} is true. 

Now we show 
\ref{item:pr:distance}. 
Let
$S,T\in \yoofam{R}$
and 
$x\in \yopetal{\youulc{G}{\yosmf{k}}{q}{p}}{T}$.
If 
$x\in \yopetal{\youulc{G}{\yosmf{k}}{q}{p}}{S}$,
then 
$d(x,\yopetal{\youulc{G}{\yosmf{k}}{q}{p}}{S})=0$.
Thus 
\ref{item:pr:distance} holds in this case. 
We may assume that 
$x\not \in \yopetal{\youulc{G}{\yosmf{k}}{q}{p}}{S}$.
Put 
\[
U=\{\, \yoratio^{-g}\mid 
g\in \supp(\yostan{G}{\yosmf{k}}{\yorep}(x))\, \}.
\] 
Since 
$x\not \in \yopetal{\youulc{G}{\yosmf{k}}{q}{p}}{S}$,
we have 
$U\setminus S\neq \emptyset$.
Since 
$U\setminus S$
is a non-empty subset of the tenuous set 
$T$,
it has a maximum. 
Put  
$u=\max (U\setminus S)$
and 
$m=-\log_{\yoratio}(u)$.
Notice that 
$\yocoef{x}{m}\neq 0$
(see Definition \ref{df:tau}). 
Define a point 
$y\in \youulc{G}{\yosmf{k}}{q}{p}$
by 
\[
y=\sum_{g\in G\cap (-\infty, m)}
\yocoef{x}{g}\youit^{g}. 
\]
The support of the displayed expansion is a subset of the support of
$x$
and hence satisfies \ref{item:19finc}.
Thus it defines an element of
$\yolc{G}{\yosmf{k}}$
when
$q=p$.
When
$q\neq p$,
Corollary \ref{cor:216} shows that it is the standard
representation of an element of
$\yoplc{G}{\yosmf{k}}{p}$.
Consequently, in either case,
$y\in\youulc{G}{\yosmf{k}}{q}{p}$.
Moreover, if
$g<m$
and
$\yocoef{x}{g}\neq0$,
then 
$\yoratio^{-g}>u$;
the maximality of
$u$
in
$U\setminus S$
therefore gives 
$\yoratio^{-g}\in S$.
Thus
$y\in \yopetal{\youulc{G}{\yosmf{k}}{q}{p}}{S}$.
The standard coefficients of
$x$
and
$y$
agree below
$m$
and differ at
$m$,
so the coefficient calculation in the proof of
Lemma \ref{lem:cond1isom} gives
$d(x,y)=\yoratio^{-m}=u$.

Let
$z\in\yopetal{\youulc{G}{\yosmf{k}}{q}{p}}{S}$
be arbitrary.
Since
$u=\yoratio^{-m}\notin S$,
the coefficient of
$z$
at
$m$
is zero,
whereas
$\yocoef{x}{m}\neq0$.
If
\[
n=\min\{\,g\in G\mid\yocoef{x}{g}\neq\yocoef{z}{g}\,\},
\]
then
$n\le m$.
The same coefficient calculation gives
\[
d(x,z)=\yoratio^{-n}\ge\yoratio^{-m}=u.
\]
Since
$z$
was arbitrary, and since the point
$y$
above realizes the value
$u$,
we obtain
\[
d(x, \yopetal{\youulc{G}{\yosmf{k}}{q}{p}}{S})=u\in T\setminus S, 
\]
which means that 
the condition 
\ref{item:pr:distance}
holds. 

We have proved that the family of
$S$-pieces
satisfies
\ref{item:pr:sep}--\ref{item:pr:distance}.
If
$R$
is uncountable, this makes
$(\youulc{G}{\yosmf{k}}{q}{p},d)$
an
$R$-petaloid
ultrametric space.
The uniqueness assertion in Theorem \ref{thm:18:petapeta} then gives an
isometry with
$(\yowusp{R},\yowudis{R})$.
\end{proof}
%proofend

\section{Universality and valued fields}\label{sec:univvalued}

We give another 
application of 
the theory of 
Urysohn universal 
ultrametric spaces
to valued fields. 
For a 
class 
$\yochara{C}$
of metric spaces, 
we say that 
a metric space 
$(X, d)$
is 
\emph{$\yochara{C}$-universal} 
or 
\emph{universal for
$\yochara{C}$}
if for every
$(A, e)\in \yochara{C}$
there exists an isometric embedding 
$f\colon A\to X$.
An ultrametric space
$(X, d)$
is 
said to be 
\yoemph{$(R, \aleph_{0})$-haloed} if 
for every
$a\in X$
and for every
$r\in R\setminus \{0\}$,
there exists a subset
 $A$
of 
$\yocball(a, r)$
such that 
$\aleph_{0}\le \card(A)$
and 
$d(x, y)=r$
for all distinct 
$x, y\in A$
(see \cite{Ishiki2023halo}). 

The following characterization will be used below.
\begin{lem}\label{lem:haloeduniversal}
Let
$R$
be a range set and let
$(X,d)$
be an
$R$-valued
ultrametric space. 
Then
$(X,d)$
is
$(R,\aleph_{0})$-haloed
if and only if it is 
$\youfin(R)$-injective.
\end{lem}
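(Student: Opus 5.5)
We prove the two implications separately. Both use the standard one-point extension reduction: $(X,d)$ is $\youfin(R)$-injective if and only if, for every finite $A\subseteq X$ and every one-point $R$-valued ultrametric extension $A\cup\{b\}$ of $A$, there is $x\in X$ with $d(x,a)=e(b,a)$ for all $a\in A$. The forward direction is immediate from the definition. For the converse, if $\phi\colon A\to B$ and $\psi\colon A\to X$ are given with $B$ finite, we enumerate $B\setminus\phi(A)$ and extend $\psi$ one point at a time, applying the one-point property to the image of what has already been embedded.

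\emph{Injective implies haloed.} Let $a\in X$ and $r\in R\setminus\{0\}$. We build $x_{1},x_{2},\dots$ recursively inside $\yocball(a,r)$ with $d(x_{i},x_{j})=r$ for $i\neq j$. Suppose $x_{1},\dots,x_{n}$ are already chosen. Consider the finite $R$-valued ultrametric space $\{a,x_{1},\dots,x_{n},b\}$ with $e(b,a)=r$ and $e(b,x_{i})=r$. This is an ultrametric: every $x_{i}$ satisfies $d(x_{i},a)\le r$ and $d(x_{i},x_{j})=r$, so every triangle through $b$ has two sides equal to $r$ and the third side at most $r$. Injectivity gives $x_{n+1}\in X$ realizing these distances. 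The resulting set $\{x_{i}\}$ is infinite and lies in $\yocball(a,r)$ with all mutual distances equal to $r$. (For $n=0$ we simply take a point at distance $r$ from $a$.)

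\emph{Haloed implies injective.} Let $A\subseteq X$ be finite and let $b$ be a one-point extension with $R$-valued distances $e(b,\cdot)$. If $e(b,a_{0})=0$ for some $a_{0}$, we take $x=a_{0}$. Otherwise, choose $a_{0}\in A$ minimizing $r=e(b,a_{0})>0$. By haloedness there is an infinite set $H\subseteq\yocball(a_{0},r)$ whose points are pairwise at distance $r$. For each $a\in A$, the open ball $U(a,r)$ contains at most one point of $H$, since two such points would be at distance $<r$. Hence all but finitely many $h\in H$ satisfy $d(h,a)\ge r$ for every $a\in A$; fix such an $h$. We claim $d(h,a)=e(b,a)$ for all $a\in A$.
\begin{itemize}
\item If $d(a,a_{0})>r$, then $d(h,a)=d(a,a_{0})$, because $d(h,a_{0})\le r<d(a,a_{0})$ and the strong triangle inequality forces equality. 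In the extension, $e(b,a)=d(a,a_{0})$ for the same reason, since $e(b,a_{0})=r<d(a,a_{0})$.
\item If $d(a,a_{0})\le r$, then $d(h,a)\le r$, and by the choice of $h$ also $d(h,a)\ge r$, so $d(h,a)=r$. On the other side, $e(b,a)\le\max\{r,d(a,a_{0})\}=r$, and $e(b,a)\ge r$ by minimality of $r$, so $e(b,a)=r$.
\end{itemize}
This also covers $a=a_{0}$, where $d(h,a_{0})=r$ since $h$ is not in $U(a_{0},r)$. Thus $h$ realizes the extension.

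The main obstacle is this last step: choosing a point of the halo that avoids the finitely many open balls $U(a,r)$, and checking the case split above. The remaining steps are routine.
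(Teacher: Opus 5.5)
Your proof is correct. It differs from the paper mainly in that the paper gives no argument: it simply invokes \cite[Theorem 1.1]{Ishiki2023halo}. Your version is self-contained and has three steps.
\begin{enumerate}
\item You reduce $\youfin(R)$-injectivity to the one-point extension property, extending $\psi$ one point of $B\setminus\phi(A)$ at a time.
\item For ``injective $\Rightarrow$ haloed'' you build the halo inside $\yocball(a,r)$ recursively. Each new point is realized at distance $r$ from $a$ and from all earlier points, and the resulting finite configuration is indeed an $R$-valued ultrametric space.
\item For ``haloed $\Rightarrow$ injective'' you take $a_{0}$ minimizing $r=e(b,a_{0})$. You then pick a halo point around $a_{0}$ outside the finitely many open balls $U(a,r)$, $a\in A$, each of which contains at most one halo point. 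The case split $d(a,a_{0})>r$ versus $d(a,a_{0})\le r$ checks out on both the $X$ side and the extension side.
\end{enumerate}
Your route shows explicitly that only the isosceles property of ultrametrics and the finiteness of $A$ are needed. The citation buys brevity and consistency with the source where the haloed notion was introduced.

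The only thing you leave implicit is the degenerate case $A=\emptyset$ in the one-point reduction, where your ``choose $a_{0}\in A$'' step is unavailable. There you just need some point of $X$, so $X\neq\emptyset$ is required. The lemma tacitly assumes this anyway: an empty $X$ is vacuously haloed but not injective if the empty space counts as a member of $\youfin(R)$.
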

\begin{proof}
This is \cite[Theorem 1.1]{Ishiki2023halo}.
\end{proof}

%thmbegin 
\begin{thm}\label{thm:fin}
Let 
$\yoratio\in (1, \infty)$,
$G\in \yogset$,
and 
let 
$(K, v)$
be a complete valued field
such that 
$G\yosub v(K)$,
and
$\yorcf{K}{v}$
is an infinite set. 
Put 
$R=\{0\}\cup \{\, \yoratio^{-g}\mid g\in G\, \}$.
Then 
$(K, \yonabs{v}{\yoratio}{*})$
is
$(R,\aleph_{0})$-haloed.
In particular, it is universal for all 
separable
$R$-valued
 ultrametric spaces. 
\end{thm}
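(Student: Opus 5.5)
The plan is to verify the haloed condition by an explicit construction using units with distinct residues, and then to deduce universality by a one-point extension argument combined with completeness of $K$. I would not route the second part through Lemma \ref{lem:haloeduniversal}, because $(K,\yonabs{v}{\yoratio}{*})$ need not be $R$-valued when $v(K)\supsetneq G$. Instead I will reprove directly the injectivity step that is needed.

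\emph{Haloedness.} Fix $a\in K$ and $r\in R\setminus\{0\}$, say $r=\yoratio^{-g}$ with $g\in G$. Since $G\yosub v(K)$, choose $\pi\in K$ with $v(\pi)=g$. Since $\yorcf{K}{v}$ is infinite, choose units $u_{1},u_{2},\dots$ in $\yovring{K}{v}$ with pairwise distinct residues, so that $v(u_{i}-u_{j})=0$ for $i\neq j$. Put $x_{i}=a+u_{i}\pi$ and $A=\{x_{i}\}_{i}$. Then
\[
\yonabs{v}{\yoratio}{x_{i}-x_{j}}=\yonabs{v}{\yoratio}{u_{i}-u_{j}}\,\yonabs{v}{\yoratio}{\pi}=r
\quad\text{and}\quad
\yonabs{v}{\yoratio}{x_{i}-a}=r .
\]
Hence $A$ is an infinite subset of $\yocball(a,r)$ whose points are pairwise at distance exactly $r$.

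\emph{One-point extension.} Let $F\subset K$ be finite, and let $F\cup\{b\}$ be an $R$-valued ultrametric space extending the metric on $F$. I want $y\in K$ with $d(y,f)=d(b,f)$ for all $f\in F$.
\begin{enumerate}
\item Choose $a\in F$ minimizing $d(b,\cdot)$ and put $r=d(b,a)$. If $r=0$, take $y=a$.
\item Otherwise $r\in R\setminus\{0\}$. Take the infinite set $\{x_{i}\}$ above for this $a$ and $r$. The open balls $U(x_{i},r)$ are pairwise disjoint, because $d(x_{i},x_{j})=r$. Since $F$ is finite, we can pick $i$ such that no $f\in F$ lies in $U(x_{i},r)$, and set $y=x_{i}$.
\item If $d(a,f)>r$, then both $d(b,f)$ and $d(y,f)$ equal $d(a,f)$ by the ultrametric inequality, since $d(b,a)=d(y,a)=r$.
\item If $d(a,f)\le r$, then $d(b,f)\le r$ by the ultrametric inequality and $d(b,f)\ge r$ by minimality, so $d(b,f)=r$. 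On the other side, $d(y,f)\le r$, and $d(y,f)\not<r$ by the choice of $i$, so $d(y,f)=r$.
\end{enumerate}

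\emph{Universality.} Let $(Y,e)$ be a separable $R$-valued ultrametric space and enumerate a countable dense subset $\{y_{n}\}$. Using the one-point extension inductively, with $F$ the image of $\{y_{1},\dots,y_{n-1}\}$, I build an isometric embedding of $\{y_{n}\}$ into $K$. Because $K$ is complete, this map extends uniquely to an isometric embedding of the completion of $Y$, and in particular of $Y$ itself.

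The only delicate points are the choice of $a$ as a minimizer in the extension step and the disjointness of the open balls $U(x_{i},r)$. The rest is routine; I expect no serious obstacle.
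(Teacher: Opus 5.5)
Your proof is correct. The haloedness part is the same as the paper's: the paper uses $a+bC$, where $v(b)=g$ and $C\subset \yovring{K}{v}$ consists of elements with pairwise distinct residue classes. Your caution about Lemma \ref{lem:haloeduniversal} is also justified, and the paper likewise avoids that lemma here.

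Where you genuinely differ is the universality step. The paper argues by citation. It invokes \cite[Theorem 1.5]{Ishiki2023halo}, which says that a complete $(R,\aleph_{0})$-haloed ultrametric space contains an isometric copy of $(\yowusp{R},\yowudis{R})$. It then uses the universality of $(\yowusp{R},\yowudis{R})$ for separable $R$-valued spaces from Theorem \ref{thm:18:petapeta}.

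You instead prove a one-point extension property for finite subsets directly. This is essentially the ``haloed $\Rightarrow$ injective'' direction of Lemma \ref{lem:haloeduniversal}, except that you only need the new distances to lie in $R$, so it works even though $K$ itself need not be $R$-valued. You then run a forth induction along a countable dense subset and finish with completeness.

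Your route is self-contained and elementary, and it yields exactly the stated ``in particular'' conclusion. The paper's route is shorter on the page and gives a stronger intermediate result: $K$ contains an isometric copy of the whole $R$-Urysohn space. When $R$ is uncountable, that space is the non-separable petaloid space, which an argument tied to a countable dense subset does not directly reach.

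The only thing you leave implicit is the trivial base case $F=\emptyset$, where you can send $y_{1}$ to any point of $K$.
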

%rhmend
%proobe 
\begin{proof}
Since 
$\aleph_{0}\le \card(\yorcf{K}{v})$,
for every 
$a\in K$
and every 
$g\in G$,
we can choose 
$b\in K$
with 
$v(b)=g$,
and the infinitude of 
$\yorcf{K}{v}$
gives a countably infinite set 
$C\yosub \yovring{K}{v}$
whose elements have pairwise distinct residue classes. 
Then 
$a+bC$
contains a countably infinite subset of 
$\yocball(a, \yoratio^{-g})$
whose distinct points are at distance 
$\yoratio^{-g}$.
Hence 
$(K, \yonabs{v}{\yoratio}{*})$
is 
$(R, \aleph_{0})$-haloed.
Since it is complete, \cite[Theorem 1.5]{Ishiki2023halo} shows that it 
contains an isometric copy of
$(\yowusp{R},\yowudis{R})$.
The latter space is universal for all separable
$R$-valued
ultrametric spaces, 
which proves the final assertion. 
\end{proof}
%proofed

\begin{thm}\label{thm:separablevaluedurysohn}
Let
$\yoratio\in(1,\infty)$,
$G\in\yogset$,
and let
$(K,v)$
be a 
separable complete valued field. 
Assume that
$v(K\setminus\{0\})=G$
and that the residue field 
$\yorcf{K}{v}$
 is infinite. 
Put
$R=\{0\}\cup\{\,\yoratio^{-g}\mid g\in G\,\}$.
Then the ultrametric space
$(K,\yonabs{v}{\yoratio}{*})$
is isometric to the 
$R$-Urysohn
universal ultrametric space
$(\yowusp{R},\yowudis{R})$.
\end{thm}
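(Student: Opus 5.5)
The plan is to identify $(K,\yonabs{v}{\yoratio}{*})$ with $(\yowusp{R},\yowudis{R})$ through the characterization of the $R$-Urysohn space as the unique separable complete $R$-valued $\youfin(R)$-injective ultrametric space. Only four properties of the metric space need checking.

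First, the space is $R$-valued. Since $v(K\setminus\{0\})=G$, every distance $\yoratio^{-v(x-y)}$ with $x\neq y$ lies in $\{\yoratio^{-g}\mid g\in G\}$, and $d(x,x)=0$. So the metric takes values in $R$, and in fact every value of $R$ is attained.

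Second, separability gives countability of $R$. A separable ultrametric space has a countable set of nonzero distances: each open ball $U(x,r)$ contains a point of a countable dense set $D$, and every nonzero distance from a point $x$ is of the form $d(x,e)$ for $e\in D$ near $x$. Hence $R$ is countable. Equivalently, $G$ is countable, since an uncountable $G$ would yield uncountably many pairwise disjoint balls. Therefore $(\yowusp{R},\yowudis{R})$ is the genuine separable $R$-Urysohn space, and its uniqueness statement applies. Completeness and separability of $K$ are part of the hypotheses.

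Third, injectivity. Theorem \ref{thm:fin} applies, because $G\yosub v(K)$ and the residue field is infinite, so $(K,\yonabs{v}{\yoratio}{*})$ is $(R,\aleph_{0})$-haloed. Lemma \ref{lem:haloeduniversal} then gives $\youfin(R)$-injectivity, since $K$ is $R$-valued by the first step.

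Finally, the uniqueness up to isometry of separable complete $R$-valued $\youfin(R)$-injective ultrametric spaces for countable $R$ (Theorem \ref{thm:18:petapeta}, or the cited uniqueness for Urysohn spaces) yields the desired isometry. The only mildly delicate point is the countability of $R$, and the disjoint-balls argument handles it. Everything else is direct assembly of earlier results.
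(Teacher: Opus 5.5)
Your proposal is correct and is essentially the paper's proof. It has the same four steps: countability of $G$ (hence of $R$) from separability via the isosceles property, $(R,\aleph_{0})$-haloedness from Theorem~\ref{thm:fin}, $\youfin(R)$-injectivity from Lemma~\ref{lem:haloeduniversal}, and finally uniqueness of the separable complete $\youfin(R)$-injective $R$-valued ultrametric space. Two minor remarks: to realize a distance $d(x,y)=r$ by a point of $D$ you should pick $e\in D$ with $d(y,e)<r$ (near $y$, not near $x$), and the paper's extra observation that the residue field is countable is, as your argument shows, not actually needed.
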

\begin{proof}
Let
$D$
be a countable dense subset of 
$(K,\yonabs{v}{\yoratio}{*})$.
For every distinct
$x,y\in K$,
we can choose
$a,b\in D$
such that
\[
\yonabs{v}{\yoratio}{x-a}<\yonabs{v}{\yoratio}{x-y}
\quad\text{and}\quad
\yonabs{v}{\yoratio}{y-b}<\yonabs{v}{\yoratio}{x-y}.
\]
The ultrametric inequality then gives
\[
\yonabs{v}{\yoratio}{a-b}=\yonabs{v}{\yoratio}{x-y}.
\]
Thus every value in
$G=v(K\setminus\{0\})$
is determined by the distance 
between two points of
$D$,
and hence
$G$
is countable. 
The residue field is also countable. 
Indeed, representatives of distinct residue classes in the valuation ring 
are pairwise at distance
$1$.
A separable metric space contains no uncountable set whose distinct points 
are pairwise at distance
$1$.
Thus
$\yorcf{K}{v}$
is countably infinite. 

Theorem \ref{thm:fin} shows that 
$(K,\yonabs{v}{\yoratio}{*})$
is
$(R,\aleph_{0})$-haloed.
Hence it is
$\youfin(R)$-injective
by 
Lemma \ref{lem:haloeduniversal}. 
Since
$G$
is countable, so is
$R$.
Consequently,
$(K,\yonabs{v}{\yoratio}{*})$
is a separable complete 
$\youfin(R)$-injective
$R$-valued
ultrametric space. 
The uniqueness of the
$R$-Urysohn
universal ultrametric space 
therefore gives the desired isometry.
\end{proof}

We record several standard applications of 
Theorem \ref{thm:separablevaluedurysohn}. 
%corbegin
\begin{cor}\label{cor:padicc}
Let
$\yoratio\in(1,\infty)$.
Put
\[
\yorsub{\zz}=\{0\}\cup\{\,\yoratio^{-n}\mid n\in\zz\,\}
\quad\text{and}\quad
\yorsub{\qq}=\{0\}\cup\{\,\yoratio^{-g}\mid g\in\qq\,\}.
\]
The following valued fields are isometric to the indicated 
Urysohn universal ultrametric spaces.
\begin{enumerate}[label=\textup{(\arabic*)}]

\item
Let
$p$
be a prime.
Then the completion of the maximal unramified extension of
$\yopnbf{p}$,
the field 
$(\yopnburf{p},\yonabs{\yopnbv{p}}{\yoratio}{*})$,
is isometric to
$(\yowusp{\yorsub{\zz}},\yowudis{\yorsub{\zz}})$.

\item
Let
$p$
be a prime.
Then the field of
$p$-adic
complex numbers
$(\yopnbcf{p},\yonabs{\yopnbv{p}}{\yoratio}{*})$
is isometric to
$(\yowusp{\yorsub{\qq}},\yowudis{\yorsub{\qq}})$.

\item
Let
$k$
be a countably infinite field, equip
$\yolaurentf{k}$
with the 
$t$-adic
valuation
$v_{t}$.
Then
$(\yolaurentf{k},\yonabs{v_{t}}{\yoratio}{*})$
is isometric to
$(\yowusp{\yorsub{\zz}},\yowudis{\yorsub{\zz}})$.

\item
Let
$k$
be a countably infinite field, and fix an extension of
$v_{t}$
to
an algebraic closure
$\overline{\yolaurentf{k}}$.
Then
$(\yolaurentcf{k},\yonabs{v_{t}}{\yoratio}{*})$
is isometric to
$(\yowusp{\yorsub{\qq}},\yowudis{\yorsub{\qq}})$.

\end{enumerate}
\end{cor}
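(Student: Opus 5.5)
The plan is to derive each of the four items directly from Theorem \ref{thm:separablevaluedurysohn}. For each field we must check four hypotheses: the field is complete, it is separable, its value group $v(K\setminus\{0\})$ equals the relevant $G\in\yogset$ (namely $\zz$ or $\qq$), and its residue field is infinite. Completeness holds by construction in every case. Items (1) and (2) are completions by definition. In (3), $\yolaurentf{k}$ is complete for $v_{t}$. In (4) we take the completion of $\overline{\yolaurentf{k}}$. Once the hypotheses are checked, the isometries follow with $R=\yorsub{\zz}$ or $R=\yorsub{\qq}$.

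For the value groups and residue fields we use standard facts. In (1), $\yopnbf{p}^{\mathrm{unr}}$ has value group $\zz$ and residue field $\overline{\yogf{p}}$, which is countably infinite. In (2), $\overline{\yopnbf{p}}$ has value group $\qq$, since $p^{1/n}$ exists and every algebraic extension has rational value group. Its residue field is $\overline{\yogf{p}}$. In (3), the value group is $\zz$ and the residue field is $k$, which is infinite. In (4), the value group of $\overline{\yolaurentf{k}}$ is $\qq$, since $t^{1/n}$ exists and finite extensions have value group contained in $\frac{1}{e}\zz$. Its residue field is $\overline{k}$, which is infinite. Passing to the completion changes neither the value group nor the residue field, because a dense subfield meets every ball. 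This gives the hypotheses on $G$ and $\yorcf{K}{v}$.

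Separability is the only point needing an argument. It suffices to exhibit a countable dense subset of a dense subfield. In (3), truncated Laurent series $\sum_{n\ge m}^{N}a_{n}t^{n}$ with $a_{n}\in k$ form a countable dense set, since $k$ is countable. For (1), the field $\yopnbf{p}^{\mathrm{unr}}$ is a countable union of the finite extensions $\yopnbf{p}(\zeta_{p^{f}-1})$. Each finite extension of $\yopnbf{p}$ is a finite-dimensional $\yopnbf{p}$-vector space, so it is separable, since $\qq$ is dense in $\yopnbf{p}$. Hence $\yopnbf{p}^{\mathrm{unr}}$ is separable, and so is its completion. For (2), $\overline{\yopnbf{p}}$ is separable. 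It is dense in the completion and is the union of finite extensions of $\yopnbf{p}$. Since $\yopnbf{p}$ has only countably many finite extensions in a fixed algebraic closure, this union is countable, and each piece is separable as above.

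The main obstacle is (4). One route is Krasner's lemma, by which every finite extension is generated by a root of a polynomial over the countable field $k(t)$. A simpler route avoids counting extensions. The algebraic closure of the countable field $k(t)$ inside $\overline{\yolaurentf{k}}$ is countable. By Krasner's lemma and continuity of roots it is dense in $\overline{\yolaurentf{k}}$, because every polynomial over $\yolaurentf{k}$ is approximated by polynomials over $k(t)$. The same density argument, with $\qq$ in place of $k(t)$, gives an alternative for (2). With all hypotheses verified, Theorem \ref{thm:separablevaluedurysohn} yields each isometry.
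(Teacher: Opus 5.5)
Your proposal is correct and follows the paper's proof: for each field you check completeness, separability, value group $\zz$ or $\qq$, and an infinite residue field, then invoke Theorem \ref{thm:separablevaluedurysohn}. The only real variation is in (4), where the paper gets density of $\overline{k(t)}$ by placing $\overline{\yolaurentf{k}}$ between $\overline{k(t)}$ and the algebraically closed completion of $\overline{k(t)}$, whereas you approximate roots directly; there, continuity of roots alone suffices (if $f$ is the minimal polynomial of $\alpha$ over $\yolaurentf{k}$ and $g$ is a nearby monic polynomial over $k(t)$, then $\lvert g(\alpha)\rvert$ is small and $g(\alpha)=\prod_{j}(\alpha-\beta_{j})$ forces some root $\beta_{j}\in\overline{k(t)}$ close to $\alpha$), so it is better to drop Krasner's lemma, which needs separability and does not apply to inseparable elements such as $t^{1/p}$ when $k$ has characteristic $p$.
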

%corend
\begin{proof}
In \textup{(1)}, the maximal unramified extension is a countable union of
finite extensions of
$\yopnbf{p}$.
Hence
$\yopnburf{p}$
is separable
and complete.
The maximal unramified extension has value group
$\zz$
and its residue field is
$\overline{\yogf{p}}$;
see
\cite[Chapter III, Section 5 and Chapter IV, Section 4]{MR0554237}.
Since completion preserves both the value group and the residue field
\cite[Chapter 2, Section 2, item (M)]{MR1677964},
the same holds for
$\yopnburf{p}$.

In \textup{(2)}, the field
$\yopnbcf{p}$
is separable and complete, with
value group
$\qq$
and residue field
$\overline{\yogf{p}}$. 

In \textup{(3)}, the field
$\yolaurentf{k}$
is complete, the countable set
$k[t,t^{-1}]$
is dense in it, its value group is
$\zz$,
and its residue
field is
$k$.

In \textup{(4)}, 
take an algebraic closure 
$\overline{k(t)}$
of 
$k(t)$,
endow it with an extension of the 
$t$-adic
valuation, 
and let 
$C$
be its
completion.
By 
\cite[Theorem 17.1]{MR2444734}, 
the field 
$C$
is algebraically closed.
The (topological) closure of 
$k(t)$
in 
$C$
is canonically isomorphic to
$\yolaurentf{k}$.
We may therefore take 
$\overline{\yolaurentf{k}}$
 to be the algebraic
closure of 
$\yolaurentf{k}$
in 
$C$.
Since every element of 
$\overline{k(t)}$
 is algebraic over
$\yolaurentf{k}$,
we have
$\overline{k(t)}\subset \overline{\yolaurentf{k}}\subset C$.
Since 
$\overline{k(t)}$
is dense in 
$C$,
it is also dense in
$\overline{\yolaurentf{k}}$.
The algebraic closure 
$\overline{\yolaurentf{k}}$
 has value group 
 $\qq$
and residue field 
$\overline{k}$,
and its completion is an immediate
extension.
Thus 
$\yolaurentcf{k}$
 is separable and complete, with value group 
 $\qq$
and residue field 
$\overline{k}$.

In every case the residue field is infinite.
Therefore all four assertions
follow from Theorem \ref{thm:separablevaluedurysohn}.
\end{proof}

\begin{lem}\label{lem:diamtenuous}
Assume that every strictly decreasing sequence in
$R\setminus\{0\}$
converges to
 $0$.
 Let
 $\{B_{n}\}_{n\in\zz_{\ge0}}$
 be a
sequence of non-empty closed or open balls in
$(\yomapsco{R}{\zz_{\ge0}},\yomaindisco)$
such that 
$B_{n+1}\yosub B_{n}$
for every 
$n\in \zz_{\ge 0}$.
Then 
there
exists 
$f\in \yomapsco{R}{\zz_{\ge0}}$
such that 
$f\in \bigcap_{n\in \zz_{\ge 0}}B_{n}$.  
\end{lem}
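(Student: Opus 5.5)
Let $r_{n}$ denote the radius of $B_{n}$. I would split into two cases according to whether the radii shrink to zero.

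In every case I first reduce to centers. Each $B_{n}$ is non-empty and contains some point $f_{n}$. In an ultrametric space every point of a ball is a center of it, so I may take $f_{n}$ as the center of $B_{n}$. The inclusion $B_{n+1}\yosub B_{n}$ gives $f_{m}\in B_{n}$ for all $m\ge n$. I then replace each radius by the effective radius
\[
s_{n}=\sup\{\,\yomaindisco(f_{n},g)\mid g\in B_{n}\,\}.
\]
The sequence $\{s_{n}\}$ is non-increasing, and it suffices to find $f$ with $\yomaindisco(f,f_{n})\le s_{n}$ for every $n$. For an open ball this needs care: a point at distance exactly $s_{n}$ need not lie in $B_{n}$. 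So for an open ball I would instead keep the condition $\yomaindisco(f,f_{n})<r_{n}$ and handle it the same way below.

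\textbf{Case 1: the sequence $\{f_{n}\}$ is eventually constant.} Say $f_{n}=f_{N}$ for all $n\ge N$. Then $f=f_{N}$ lies in $B_{n}$ for every $n$: for $n\ge N$ because it is the center of $B_{n}$, and for $n<N$ because $B_{N}\yosub B_{n}$.

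\textbf{Case 2: the distances $\yomaindisco(f_{n},f_{n+1})$ are not eventually $0$.} Then $\delta_{n}=\sup_{m\ge n}\yomaindisco(f_{n},f_{m})$ is non-increasing. I would argue that $\delta_{n}\to0$ by passing to a subsequence on which the nonzero values $\yomaindisco(f_{n},f_{n+1})$ are strictly decreasing elements of $R\setminus\{0\}$. For this I would use that in an ultrametric space $\yomaindisco(f_{n},f_{m})$ equals the maximum of the consecutive distances. This is where the hypothesis on $R$ enters: it forces these values to converge to $0$. The step I expect to be the main obstacle is the case where consecutive distances stay bounded below by some $c>0$ without strictly decreasing. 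Then the radii $r_{n}\ge c$ stay large, and intersecting balls of radius $\ge c$ around a non-Cauchy sequence needs a different argument. For this I would first try the following coordinate argument, which exploits the structure of $\yomapsco{R}{\zz_{\ge0}}$. Set $\rho=\inf_{n} r_{n}$ (using $s_{n}$ in the closed case). For each $r\in R$ with $r>\rho$, membership in $B_{n}$ constrains the coordinate $g(r)$ to equal $f_{n}(r)$ once $r_{n}<r$, or $r_{n}\le r$ in the open case. Since the balls are nested, these constraints are compatible. So I would define $f(r)=f_{n}(r)$ for any $n$ with $r_{n}<r$ (the value does not depend on $n$), and $f(r)=0$ for $r\le\rho$.

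It remains to check that $f$ belongs to $\yomapsco{R}{\zz_{\ge0}}$. The support of $f$ above any level $\epsilon>\rho$ coincides with that of $f_{n}$ for a suitable $n$, so it is finite there. Its support below $\rho$ is empty. If $\rho>0$, the support is therefore contained in a finite set; the values in $(\rho,\infty)$ with nonzero coordinate form a set whose decreasing enumerations must converge to $0$ by hypothesis, so they cannot accumulate at $\rho>0$ and the set is finite. If $\rho=0$, the same reasoning shows that any strictly decreasing sequence of support points tends to $0$, so the support is tenuous. Finally $f$ agrees with $f_{n}$ on all coordinates above $r_{n}$, so $\yomaindisco(f,f_{n})\le r_{n}$, with strict inequality in the open case, giving $f\in\bigcap_{n}B_{n}$. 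The delicate points I would double-check are the boundary coordinate $r=\rho$ and the distinction between open and closed balls there.
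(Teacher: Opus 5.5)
Your coordinate construction is sound and gives a genuinely different, more uniform proof than the paper's. Case~1 and the $\delta_n$ discussion can simply be dropped. The claim $\delta_n\to0$ is false in general: keep $B_n$ constant and let the chosen centers alternate between two points at distance $c>0$. Also, $\yomaindisco(f_n,f_m)$ is only bounded by the maximum of the consecutive distances, not equal to it.

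The paper splits on whether $\yodiam(B_n)$ is eventually constant. If it is, the paper argues that the balls are eventually equal. If not, it extracts strictly decreasing realized distances in $R\setminus\{0\}$, uses the hypothesis to get $\yodiam(B_n)\to0$, and defines $f(s)=a_{N(s)}(s)$. That is exactly your recipe with $\rho=0$.

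You run the recipe with an arbitrary threshold $\rho=\inf_n r_n$ and spend the hypothesis on the output instead. The support of $f$ lies in $(\rho,\infty)$ and is finite above each $\epsilon>\rho$, so an infinite support would be a strictly decreasing sequence in $R\setminus\{0\}$ with limit $\rho>0$. This covers all cases at once and bypasses the paper's claim that nested balls of equal diameter are eventually equal. It also uses the hypothesis exactly where the counterexample of Proposition~\ref{prop:sphericalcriterion} lives. You need neither monotone radii nor the $s_n$: for any two indices the later center lies in the earlier ball, which gives both the independence of $f(r)$ from $n$ and $\yomaindisco(f,f_n)\le r_n$.

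What remains is the boundary coordinate you flagged, and there the recipe as written does fail. Setting $f(r)=0$ for all $r\le\rho$ is wrong when some $B_N$ is an open ball $U(f_N,\rho)$ with $\rho=\min_n r_n\in R$, since membership forces $f(\rho)=f_N(\rho)$. For instance, take $R=\{0,\tfrac12,1\}$ and $B_n=U(a,1)$ for all $n$, with $a(1)=1$ and $a(\tfrac12)=0$. Let the centers alternate between $a$ and the point $a'$ that differs from $a$ only at $\tfrac12$. You get $\rho=1$ and $f=0$, but $\yomaindisco(0,a)=1$, so $f\notin U(a,1)$. Effective radii do not cure this in general, e.g.\ if $R$ accumulates at $\rho$ from below.

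Open balls with $r_n>\rho$ are harmless. Pick $m$ with $r_m<r_n$; whichever of $f_m,f_n$ comes later lies in the other's ball, so $f(r_n)=f_m(r_n)=f_n(r_n)$.

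The fix: if such an $N$ exists, set $f(\rho)=f_N(\rho)$. This is independent of $N$, because later centers lie in earlier balls. No closed ball and no open ball of radius $>\rho$ constrains the coordinate $\rho$. The support grows by at most one point, so $f$ stays in $\yomapsco{R}{\zz_{\ge0}}$ and lies in every $B_n$. With this patch your argument is a complete proof.
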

\begin{proof}
If the sequence
$\{\yodiam(B_{n})\}$
is eventually constant, then the
description of balls in the coordinate model shows that a proper inclusion
at unchanged diameter can occur only when a closed ball is replaced by the
corresponding open ball.
Hence the balls are eventually equal, and their
intersection is non-empty.
Thus we may assume that
$\{\yodiam(B_{n})\}$
is not eventually constant.
Choose a strictly increasing sequence 
$\{n_{j}\}_{j\in\zz_{\ge0}}$
such that 
$\yodiam(B_{n_{j+1}})<\yodiam(B_{n_{j}})$
for every 
$j\in\zz_{\ge0}$.
By the definition of the diameter, 
we can take 
$x_{j},y_{j}\in B_{n_{j}}$
such that 
$\yodiam(B_{n_{j+1}})
<\yomaindisco(x_{j},y_{j})
\le \yodiam(B_{n_{j}})$.
Put 
$r_{j}=\yomaindisco(x_{j},y_{j})$.
Then 
$r_{j}\in R\setminus\{0\}$
and 
$r_{j+1}
\le \yodiam(B_{n_{j+1}})
<r_{j}$.
The assumption on 
$R$
 gives $\lim_{j\to\infty}r_{j}=0$.
Thus
we obtain 
$\lim_{n\to \infty}\yodiam(B_{n})=0$.
For each 
$n\in \zz_{\ge 0}$
take a center 
$a_{n}$
of 
$B_{n}$.
For
$s\in R\setminus\{0\}$,
put
$N(s)=\min\{\,n\in\zz_{\ge0}\mid \yodiam(B_{n})<s\,\}$. 
Define
$f\colon R\to\zz_{\ge0}$
by 
$f(s)=0$ if 
$s=0$; 
otherwise, 
$f(s)=a_{N(s)}(s)$. 
From 
 the assumption on 
$R$, 
it follows that 
for every
$\epsilon>0$,
the set 
$\{\,s\in R\mid \epsilon\le s,\ f(s)\neq0\,\}$
is finite. 
Thus 
$\{0\}\cup\{\,s\in R\mid f(s)\neq0\,\}$
is tenuous and 
$f\in \yomapsco{R}{\zz_{\ge0}}$. 
By the definition of 
$f$,  
and by the 
fact that 
$a_{n}$
 is  a center of 
$B_{n}$, 
we conclude that
$f\in B_{n}$ 
 for every 
$n\in \zz_{\ge 0}$. 
\end{proof}

\begin{prop}\label{prop:sphericalcriterion}
The space
$(\yowusp{R},\yowudis{R})$
is spherically complete
if and only if every strictly decreasing sequence in
$R\setminus\{0\}$
converges to
$0$.
\end{prop}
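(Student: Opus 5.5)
The plan is to work throughout in the coordinate model. By Lemma \ref{lem:Gisom}, $(\yowusp{R},\yowudis{R})$ is isometric to $(\yomapsco{R}{\zz_{\ge0}},\yomaindisco)$. Since spherical completeness is an isometry invariant, it suffices to prove both directions for this model.

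\emph{Sufficiency.} Assume every strictly decreasing sequence in $R\setminus\{0\}$ converges to $0$. Take any decreasing sequence $\{B_{n}\}$ of non-empty closed or open balls. Lemma \ref{lem:diamtenuous} gives a point in $\bigcap_{n}B_{n}$, so the space is spherically complete.

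\emph{Necessity.} We argue by contraposition. Suppose $\{r_{i}\}_{i\in\zz_{\ge0}}$ is a strictly decreasing sequence in $R\setminus\{0\}$ with $\lim_{i}r_{i}=c>0$. We will build a decreasing family of closed balls with empty intersection.

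First define points $f_{i}\in\yomapsco{R}{\zz_{\ge0}}$ by $f_{i}(r_{j})=1$ for $j<i$ and $f_{i}(s)=0$ otherwise. Each support is finite, hence tenuous, so $f_{i}$ lies in the model. For $i<j$ we get $\yomaindisco(f_{i},f_{j})=r_{i}$. Next put $B_{i}=B(f_{i},r_{i})$. Since $\yomaindisco(f_{i+1},f_{i})=r_{i}$ and $r_{i+1}<r_{i}$, the ultrametric inequality gives $B_{i+1}\yosub B_{i}$.

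Now suppose some $g$ lies in every $B_{i}$. Then $\yomaindisco(g,f_{i})\le r_{i}$ forces $g(s)=f_{i}(s)$ for every $s\in R$ with $s>r_{i}$. In particular, $g(r_{j})=1$ for all $j<i$, and this holds for every $i$. So $g(r_{j})=1$ for all $j$.

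This contradicts the tenuity of the support of $g$. The support contains the infinitely many values $r_{j}$, all of which are at least $c>0$. A tenuous set has only finitely many elements in $[c,\infty)$: a finite set trivially does, and a semi-sporadic set has elements $a_{i}\to0$. Hence $\bigcap_{i}B_{i}=\emptyset$, and the space is not spherically complete.

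The main point needing care is the claim that $\yomaindisco(g,f_{i})\le r_{i}$ pins down the coordinates above $r_{i}$. This follows directly from the definition of $\yomaindisco$ as the maximum of the set where two functions differ. The rest is routine.
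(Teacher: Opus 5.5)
Your proof is correct and follows the paper's proof. You use the same coordinate-model counterexample for necessity ($f_i(r_j)=1$ for $j<i$, with nested closed balls $B(f_i,r_i)$ whose intersection would force infinitely many support points above the positive limit) and the same appeal to Lemma \ref{lem:diamtenuous} for sufficiency. You also spell out the nesting and coordinate-agreement steps that the paper leaves implicit.
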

\begin{proof}
By Lemma \ref{lem:Gisom}, identify
$(\yowusp{R},\yowudis{R})$
with
$(\yomapsco{R}{\zz_{\ge0}},\yomaindisco)$.

Suppose first that a strictly decreasing sequence
$\{r_{n}\}_{n\in\zz_{\ge0}}$
in
$R\setminus\{0\}$
does not converge to
$0$.
Its limit is positive.
Define
$f_{n}(r_{i})=1$
for
$i<n$
and put
$f_{n}=0$
elsewhere.
The closed balls
$\{\,\yocball(f_{n},r_{n})\mid n\in\zz_{\ge0}\,\}$
form a decreasing sequence.
Any point in their intersection would satisfy
$f(r_{n})=1$
for every
$n$,
contradicting the tenuous-support condition
because the
$r_{n}$
have a positive limit.

Conversely, assume that every strictly decreasing sequence in
$R\setminus\{0\}$
converges to
$0$,
and let
$\{B_{n}\}$
be a decreasing
sequence of non-empty closed or open balls.
By Lemma
\ref{lem:diamtenuous},
there exists
$f\in\yomapsco{R}{\zz_{\ge0}}$
such that
$f\in B_{n}$
for every
$n\in\zz_{\ge0}$.
Hence
$\bigcap_{n\in \zz_{\ge 0}} B_{n}\neq\emptyset$.
\end{proof}

\begin{cor}\label{cor:notspherical}
Let
$\yoratio\in(1,\infty)$,
$G\in\yogset$,
$(q,p)\in\yoqpset$,
and let
$\yosmf{k}$
be a perfect field of characteristic
$p$
such that
$\card(\yosmf{k})=\aleph_{0}$.
Put
\[
R=\{0\}\cup\{\,\yoratio^{-g}\mid g\in G\,\}.
\]
Define an ultrametric
$d$
on
$\youhaq{G}{\yosmf{k}}{q}{p}$
by
\[
d(x,y)=
\yonabs{\youhavq{G}{\yosmf{k}}{q}{p}}{\yoratio}{x-y}.
\]
Then the following conditions are equivalent:
\begin{enumerate}[label=\textup{(\arabic*)}]
\item
$G$
is order-isomorphic to
$\zz$;
\item
$(\youhaq{G}{\yosmf{k}}{q}{p},d)$
is isometric to the
$R$-Urysohn
universal ultrametric space.
\end{enumerate}
\end{cor}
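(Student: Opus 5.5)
The plan is to prove the two implications separately: the spherical completeness of Hahn-type fields (Proposition \ref{prop:hahnspherical}) drives one direction, and the identification with a Levi--Civita field drives the other.

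For (1)$\Rightarrow$(2), assume $G$ is order-isomorphic to $\zz$. Since $\zz\yosub G\yosub\rr$ and $G$ is a subgroup, $G$ is discrete, hence cyclic, so $G=\frac{1}{m}\zz$ for some $m\in\zz_{\ge1}$. In particular every subset of $G$ bounded above is finite. Then every well-ordered subset of $G$ satisfies \ref{item:19finc}, because a well-ordered set is bounded below and so meets $(-\infty,n]$ in a finite set.

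It follows that $\yoha{G}{\yosmf{k}}=\yolc{G}{\yosmf{k}}$ when $q=p$. When $q\neq p$, the same observation gives $\yopd{G}{\yosmf{k}}=\yoha{G}{\yowitto{\yosmf{k}}}$, and hence $\yoplc{G}{\yosmf{k}}{p}=\yopf{G}{\yosmf{k}}{p}$. This is exactly the argument already used in Proposition \ref{prop:primevaluedextension} for $G=\zz$. Thus $\youhaq{G}{\yosmf{k}}{q}{p}=\youulc{G}{\yosmf{k}}{q}{p}$ as valued fields, and Theorem \ref{thm:mainury} gives the isometry with $(\yowusp{R},\yowudis{R})$.

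For (2)$\Rightarrow$(1), suppose $(\youhaq{G}{\yosmf{k}}{q}{p},d)$ is isometric to $(\yowusp{R},\yowudis{R})$. By Proposition \ref{prop:hahnspherical}(3) the Hahn field is spherically complete, and spherical completeness is an isometry invariant. Proposition \ref{prop:sphericalcriterion} then says that every strictly decreasing sequence in $R\setminus\{0\}$ converges to $0$. Since $g\mapsto\yoratio^{-g}$ is an order-reversing bijection, this means every strictly increasing sequence in $G$ tends to $+\infty$.

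Now suppose $G$ is not order-isomorphic to $\zz$. A subgroup of $\rr$ that is not cyclic is dense, because a nontrivial subgroup of $\rr$ is either cyclic or dense. Any dense $G$ contains a strictly increasing sequence converging to $0$, for instance $-g_n$ with $g_n\in G\cap(0,1/n)$ chosen decreasing. This contradicts the previous paragraph, so $G$ must be $\frac1m\zz$ and hence order-isomorphic to $\zz$.

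I expect the only delicate point to be the identification $\youhaq{G}{\yosmf{k}}{q}{p}=\youulc{G}{\yosmf{k}}{q}{p}$ in the $p$-adic case. It requires checking that $\yoproj$ of all of $\yoha{G}{\yowitto{\yosmf{k}}}$ lands in $\yoplc{G}{\yosmf{k}}{p}$, which holds since every element already lies in $\yopd{G}{\yosmf{k}}$. I also need that the valuations agree, which holds because Proposition \ref{prop:pppsubsub} defines the valuation on $\youulc{G}{\yosmf{k}}{q}{p}$ as a restriction. The dichotomy for subgroups of $\rr$ is standard: if $\inf(G\cap(0,\infty))>0$, that infimum is attained and generates $G$; otherwise $G$ is dense.
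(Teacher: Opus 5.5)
Your proof is correct and follows the paper's argument essentially step for step: (1)$\Rightarrow$(2) via the identification $\youhaq{G}{\yosmf{k}}{q}{p}=\youulc{G}{\yosmf{k}}{q}{p}$ and Theorem~\ref{thm:mainury}, and (2)$\Rightarrow$(1) via Proposition~\ref{prop:hahnspherical}, Proposition~\ref{prop:sphericalcriterion}, and the discrete-versus-dense dichotomy for subgroups of $\rr$, where your sequence $-g_{n}\uparrow 0$ plays the role of the paper's bounded partial sums $s_{n}=\sum_{i\le n}d_{i}$. One small slip: ``every subset of $G$ bounded above is finite'' is false (the negative integers are a counterexample); it should read ``every bounded subset of $G$ is finite'', which is what your next sentence actually uses.
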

\begin{proof}
Assume first that
$G$
is order-isomorphic to
$\zz$.
Then
$G=c\zz$
for some
$c>0$.
Every well-ordered subset
$A$
of
$G$
has a minimum, and the discreteness of
$G$
implies that
$A\cap(-\infty,n]$
is finite for every
$n\in\zz$.
Thus every Hahn series with exponents in
$G$
satisfies the condition
\ref{item:19finc}.
When
$q=p$,
this gives
$\yoha{G}{\yosmf{k}}=\yolc{G}{\yosmf{k}}$.
When
$q\neq p$,
it gives
$\yopd{G}{\yosmf{k}}
=\yoha{G}{\yowitto{\yosmf{k}}}$,
and hence
$\yopf{G}{\yosmf{k}}{p}=\yoplc{G}{\yosmf{k}}{p}$.
Therefore, in either case,
$\youhaq{G}{\yosmf{k}}{q}{p}=\youulc{G}{\yosmf{k}}{q}{p}$.
Condition \textup{(2)} now follows from Theorem \ref{thm:mainury}.

Conversely, assume that \textup{(2)} holds.
By Proposition \ref{prop:hahnspherical},
the Hahn field
$(\youhaq{G}{\yosmf{k}}{q}{p},d)$
is spherically complete.
Hence the
$R$-Urysohn
universal ultrametric space is also spherically complete.
Proposition \ref{prop:sphericalcriterion} shows that every strictly
decreasing sequence in
$R\setminus\{0\}$
converges to
$0$.
We claim that
$G$
is discrete in
$\rr$.
Otherwise, for every
$n\in \zz_{\ge 0}$
we could choose
$d_{n}\in G$
such that
$0<d_{n}<2^{-n}$.
Putting
$s_{n}=\sum_{i=0}^{n}d_{i}$,
we would obtain a strictly increasing bounded sequence
$\{s_{n}\}_{n\in \zz_{\ge 0}}$
in
$G$.
The sequence
$\{\yoratio^{-s_{n}}\}_{n\in \zz_{\ge 0}}$
would then be strictly decreasing in
$R\setminus\{0\}$
and would have a positive limit, a contradiction.
Thus
$G$
is discrete.
Every non-zero discrete subgroup of
$\rr$
has a least positive element
$c$
and is equal to
$c\zz$.
Consequently,
$G$
is order-isomorphic to
$\zz$.
\end{proof}

%%%%%%%%%%%%%%%%%%%%%%%%%%
%bibliography
%%%%%%%%%%%%%%%%%%%%%%%%%%

%READ!
%myplain is the same to amsplain style. 
%\nocite{*}
\bibliographystyle{myplaindoidoi}
\bibliography{../../../bibtex/UU.bib}
%%%%%%%%%%%%%%%%

%%%%%%%%%%%%%%%%%%%%%%%%%%%%%%

%%%%%%%%%%%%%%%%%%%%%%%%%%
%%%%%%%%%%%%%%%%%%%%%%%%%%
%%%%%%%%%%%%%%%%%%%%%%%%%%
%%%%%%%%%%%%%%%%%%%%%%%%%%
%%%%%%%%%%%%%%%%%%%%%%%%%%
%%%%%%%%%%%%%%%%%%%%%%%%%%
%%%%%%%%%%%%%%%%%%%%%%%%%%
%%%%%%%%%%%%%%%%%%%%%%%%%%
%%%%%%%%%%%%%%%%%%%%%%%%%%
%%%%%%%%%%%%%%%%%%%%%%%%%%
%%%%%%%%%%%%%%%%%%%%%%%%%%
%end
\end{document}